\title{Compact Operator Semigroups Applied to Dynamical Systems}
\begin{document}

\begin{abstract}
  In this paper we develop a systematic theory of compact operator semigroups on locally convex vector spaces. In particular we prove new and generalized versions of the mean ergodic theorem and apply them to different notions of mean ergodicity appearing in topological dynamics.\medskip\\
  \textbf{Mathematics Subject Classification (2010)}. Primary 47A35, 47D03; Secondary 37B05.
\end{abstract}

\maketitle
\section{Introduction}

In \cite{Koehl1994} and \cite{Koehl1995} A. Köhler introduced the \emph{enveloping operator semigroup} of a power bounded operator $T \in \mathscr{L}(X)$ on a Banach space $X$ as the closure of the semigroup $\EuScript{S}\coloneqq \{(T')^n\mid n \in \N_0\}$ with respect to the operator topology of pointwise convergence induced by the weak* topology on $X'$.\\
If the operator $T$ is the Koopman operator of a topological dynamical system $(K;\varphi)$, i.e.{,} $Tf = f \circ \varphi$ for each $f \in \mathrm{C}(K)$, the relation between this enveloping operator semigroup and the classical \emph{Ellis semigroup} is an interesting issue (see \cite{Koehl1995} and \cite{Glas2007a}).\\
In addition, A. Romanov used the semigroup given by the convex closure of $\EuScript{S}$ to examine mean ergodicity of operators with respect to the weak* topology ({see} \cite{Roma2011}) and{---in the setting of topological dynamics---}with respect to the topology of pointwise convergence ({see} \cite{Roma2013}).\medskip\\
In this paper we generalize the {concepts from above} and give a new and systematic approach to compact operator semigroups on locally convex spaces. {We then discuss its applications to dynamical systems.}\\
It turns out that these semigroups are right topological semigroups (see \cite{BeJuMi1978} and \cite{BeJuMi1989} for an introduction to their theory) with respect to the topology of pointwise convergence. We use this as our starting point and introduce the {new concept of }abstract Köhler semigroups with their basic properties ({see} \cref{jdlgscenario} and \cref{relations}).
In the subsequent section we discuss examples appearing in topological dynamics: the \emph{Köhler semigroup}, the \emph{Ellis semigroup} and the \emph{Jacobs semigroup} (see \cref{relationstop}). {We give a new operator theoretic characterization of the Ellis semigroup (see \cref{sgrtopdyn}) and prove that a metric topological dynamical system is tame if and only its convex Köhler semigroup is a Fréchet--Urysohn space (see \cref{chartame}).}\\
Our main results are contained in the last two sections. In Section 4 we return to the abstract viewpoint and generalize {a mean ergodic theorem established by A. Romanov in \cite{Roma2011} for a single operator on a dual Banach space with the weak* topology to left amenable operator semigroups acting on locally convex spaces ({see} \cref{mainfirstchap}). The result is then applied to extend a mean ergodic theorem of M. Schreiber (\cite{Schr2013a}) to right amenable operator semigroups on barrelled locally convex spaces (see \cref{normweakly}).}\\
In the {final} section we study {unique ergodicity, norm mean ergodicity and weak* mean ergodicity} for {right} amenable semitopological semigroups acting on a compact space. This approach as well as the result that transitive weak* mean ergodic systems are already uniquely ergodic ({see} \cref{mainrelations}) seem to be new.\\
At the end of this section we discuss mean ergodicity of tame metric systems. We give a new proof for unique ergodicity of minimal tame metric dynamical systems generalizing {results} of Glasner (\cite{Glas2007a}), Huang (\cite{Huan2006}) as well as Kerr and Li (\cite{KeLi2007}) to the case of amenable semigroup actions ({see} \cref{uniquetame}). {Finally we extend results of Romanov in \cite{Roma2013} and characterize weak* mean ergodicity for tame amenable semigroup actions ({see} \cref{pointwisemain}).}\medskip\\
{In this paper all vector spaces are complex. }Moreover, all topological vector spaces and compact spaces are assumed to be Hausdorff. Given topological vector spaces $X$ and $Y$ we denote the continuous linear mappings from $X$ to $Y$ by $\mathscr{L}(X,Y)$ and set $\mathscr{L}(X) \coloneqq \mathscr{L}(X,X)$.\\
We recall that a special class of locally convex spaces arise{s} from dual pairs ({see} \cite{Scha1999}, Chapter IV). More generally, take two vector spaces $X$ and $Y$ and a bilinear mapping $\langle \,\cdot\, ,\, \cdot \, \rangle\colon X\times Y\longrightarrow \mathbbm{C}$ that separates $X$, i.e., for each $0 \neq x \in X$ there is $y \in Y$ with $\langle x,y\rangle \neq 0$, and call the pair $(X,Y)$ a \emph{left{-}separating pair}. The seminorms $\rho_{y}$ given by $\rho_y(x)\coloneqq |\langle x,y\rangle |$ for $x \in X$ and $y \in Y$ define a (Hausdorff) locally convex topology $\sigma(X,Y)$ on $X$.\\
The canonical duality between a Banach space $X$ and its dual $X'$ clearly defines left-separating pairs $(X,X')$ and $(X',X)$. Other examples are discussed in Section 3.\\

\section{Abstract Köhler Semigroups}
We start with a general definition of Köhler semigroups. Examples and applications are given in the sections below.
\begin{definition}
Let $X$ be a locally convex space, $\EuScript{S} \subset \mathscr{L}(X)$ a subsemigroup and equip $X^X$ with the product topology. The \emph{(abstract) Köhler semigroup} of $\EuScript{S}$ is the closure $\EuScript{K}(\EuScript{S})\coloneqq \overline{\EuScript{S}} \subset X^X$.
\end{definition}
If $X$ carries the weak topology induced by a left-separating pair $(X,Y)$ we denote the Köhler semigroup associated to a semigroup $\EuScript{S} \subset \mathscr{L}(X)$ by $\EuScript{K}(\EuScript{S};X,Y)$.\smallskip\\
{Recall that a semigroup $S$ equipped with a topology is called \emph{right topological} if the mapping
\begin{align*}
S \longrightarrow S, \quad s \mapsto st
\end{align*}
is continuous for each $t \in S$. It is \emph{left topological} if
\begin{align*}
S \longrightarrow S, \quad s \mapsto ts
\end{align*}
is continuous for each $t \in S$ and \emph{semitopological} if it is both, left topological and right topological (see Section 1.3 of \cite{BeJuMi1989}).\\
The \emph{topological center} $\Lambda(S)$ of a right topological semigroup $S$ is 
\begin{align*}
\Lambda(S) \coloneqq \{s \in S\mid t \mapsto st \text{ is continuous}\}.
\end{align*}
The following basic properties of the Köhler semigroup are readily verified.}
\begin{lemma}\label{lemmaprop}
For an operator semigroup $\EuScript{S} \subset \mathscr{L}(X)$ on a locally convex space $X$ the following assertions hold.
\begin{enumerate}[(i)]
\item $\EuScript{K}(\EuScript{S})$ is a right topological semigroup.
\item The topological center $\Lambda(\EuScript{K}(\EuScript{S}))$ contains $\EuScript{S}$.
\item $\EuScript{K}(\EuScript{S})$ is compact if and only if $\EuScript{S}x$ is relatively compact for each $x \in X$.
\item If $\EuScript{K}(\EuScript{S}) \subset \mathscr{L}(X)$, then $\EuScript{K}(\EuScript{S})$ is semitopological.
\item If $\EuScript{S}$ is abelian, then $\EuScript{K}(\EuScript{S})$ is semitopological if and only if it is abelian.
\end{enumerate}
\end{lemma}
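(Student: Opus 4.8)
The plan is to reduce all five assertions to a few elementary continuity statements about the product topology on $X^X$, exactly as the phrase ``readily verified'' suggests. Write $\mathrm{ev}_x\colon X^X\to X$, $f\mapsto f(x)$, for the evaluation at a point $x\in X$; by definition of the product topology each $\mathrm{ev}_x$ is continuous and a net converges in $X^X$ if and only if it converges coordinatewise. From the identity $\mathrm{ev}_x\circ R_g=\mathrm{ev}_{g(x)}$ one obtains that the right translation $R_g\colon f\mapsto fg$ is continuous on $X^X$ for \emph{every} $g\in X^X$, and from $\mathrm{ev}_x\circ L_g=g\circ\mathrm{ev}_x$ that the left translation $L_g\colon f\mapsto gf$ is continuous on $X^X$ whenever $g\colon X\to X$ is continuous. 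First I would use this to check that $\EuScript{K}(\EuScript{S})$ is genuinely a subsemigroup of $X^X$: for $s\in\EuScript{S}$ and $g=\lim_\alpha s_\alpha\in\overline{\EuScript{S}}$, continuity of $L_s$ (available since $s\in\mathscr{L}(X)$) gives $sg=\lim_\alpha ss_\alpha\in\overline{\EuScript{S}}$, hence $\EuScript{S}g\subseteq\overline{\EuScript{S}}$; then continuity of $R_g$ gives $\overline{\EuScript{S}}\,g=R_g(\overline{\EuScript{S}})\subseteq\overline{R_g(\EuScript{S})}=\overline{\EuScript{S}g}\subseteq\overline{\EuScript{S}}$.

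Assertions (i), (ii) and (iv) then follow at once. Restricting $R_g$ to $\EuScript{K}(\EuScript{S})$ proves (i). Since every $s\in\EuScript{S}\subseteq\mathscr{L}(X)$ is continuous, $L_s$ is continuous, so $s\in\Lambda(\EuScript{K}(\EuScript{S}))$, which is (ii). If $\EuScript{K}(\EuScript{S})\subseteq\mathscr{L}(X)$, then every $g\in\EuScript{K}(\EuScript{S})$ is continuous, hence $L_g$ is continuous, and together with (i) this says $\EuScript{K}(\EuScript{S})$ is semitopological, which is (iv). For (iii) I would note that $\overline{\EuScript{S}}$ is a closed subset of $X^X$ contained in $\prod_{x\in X}\overline{\EuScript{S}x}$; by Tychonoff the latter is compact as soon as each $\overline{\EuScript{S}x}$ is compact, i.e.\ each $\EuScript{S}x$ is relatively compact, and this forces $\EuScript{K}(\EuScript{S})$ to be compact. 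Conversely, if $\EuScript{K}(\EuScript{S})$ is compact then $\mathrm{ev}_x(\EuScript{K}(\EuScript{S}))$ is a compact, hence (as $X$ is Hausdorff) closed, subset of $X$ containing $\EuScript{S}x$, so $\overline{\EuScript{S}x}$ is compact.

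It remains to treat (v), which is the only point requiring a little care. If $\EuScript{S}$ is abelian and $\EuScript{K}(\EuScript{S})$ is abelian too, then $L_g=R_g$ on $\EuScript{K}(\EuScript{S})$ for every $g$, so $L_g$ is continuous by (i) and $\EuScript{K}(\EuScript{S})$ is semitopological. For the converse I would argue with nets in two steps. First, for $s\in\EuScript{S}$ and $g=\lim_\alpha s_\alpha\in\EuScript{K}(\EuScript{S})$, continuity of $L_s$ and of $R_s$ together with commutativity of $\EuScript{S}$ give $sg=\lim_\alpha ss_\alpha=\lim_\alpha s_\alpha s=gs$. Second, for arbitrary $f=\lim_\alpha s_\alpha$ and $g$ in $\EuScript{K}(\EuScript{S})$, continuity of $R_g$ (part (i)) yields $fg=\lim_\alpha s_\alpha g$, the first step rewrites this as $\lim_\alpha g s_\alpha$, and continuity of $L_g$---which is precisely the hypothesis that $\EuScript{K}(\EuScript{S})$ is left topological---yields $\lim_\alpha g s_\alpha=gf$; hence $fg=gf$. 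The only genuine subtlety in the whole proof is keeping track of which translations are already known to be continuous at each stage of this last computation; everything else is bookkeeping with the product topology and Tychonoff's theorem.
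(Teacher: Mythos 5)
Your proof is correct. The paper offers no argument at all for this lemma (it is declared ``readily verified''), and what you have written---reducing everything to the continuity of the evaluations $\mathrm{ev}_x$, the resulting continuity of $R_g$ for arbitrary $g \in X^X$ and of $L_g$ for continuous $g$, Tychonoff for (iii), and the two-step net computation for the converse in (v)---is exactly the standard verification the author has in mind, with the one genuinely non-obvious point (that $\overline{\EuScript{S}}$ is closed under composition, needed before (i) even makes sense) handled properly.
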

For {operator semigroups on} certain classes of locally convex spaces one can say more and we recall some definitions from this theory. For an introduction we refer to \cite{Scha1999} and \cite{Jarc1981}.
\begin{definition}\label{repetitiontvs}
A locally convex vector space $X$ is called 
\begin{enumerate}[(i)]
\item \emph{barrelled} if every radial, convex, circled and closed set is a zero neighborhood. 
\item \emph{Montel} if it {is} barrelled and every bounded subset is relatively compact.
\item \emph{quasi{-}complete} if every closed bounded subset is complete (with respect to the uniformity defined by the zero neighborhoods of $X$).
\end{enumerate}
\end{definition}
Banach spaces and even Fréchet spaces are barrelled. Important examples of Montel spaces are the space of all holomorphic functions $\mathrm{H}(\Omega)$ on an open connected subset $\Omega \subset \C$ equipped with the topology of compact convergence and the space of smooth functions $\mathrm{C}^\infty(\Omega)$ on an open set $\Omega \subset \R^n$ equipped with the topology of compact convergence and, in all derivatives. We refer to Section 11.5 of \cite{Jarc1981} for more about these spaces.\\
Besides Banach and Fréchet spaces the dual space of a Banach space equipped with the weak* topology is quasi{-}complete.
\begin{proposition}\label{jdlgscenario}
Consider an operator semigroup $\EuScript{S}\subset \mathscr{L}(X)$ on a locally convex space $X$.
\begin{enumerate}[(i)]
\item The Köhler semigroup $\EuScript{K}(\EuScript{S})$ consists of continuous mappings on $X$ if one of the following conditions is fulfilled.
\begin{enumerate}[(a)]
\item For each pointwise bounded subset $M \subset \mathscr{L}(X)$ the pointwise closure $\overline{M} \subset X^X$ is contained in $\mathscr{L}(X)$. 
\item The space $X$ is barrelled.
\item $X$ carries the weak topology $\sigma(X,X')$ associated to a barrelled topology on $X$.
\item The semigroup $\EuScript{S}\subset \mathscr{L}(X)$ is equicontinuous.
\end{enumerate}
\item If $X$ is Montel \color{red} and $\EuScript{S}$ is bounded\color{black}, then $\EuScript{K}(\EuScript{S})$ is a compact semitopological subsemigroup of $\mathscr{L}(X)$.
\end{enumerate}
\end{proposition}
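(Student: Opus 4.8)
The plan is to reduce the four cases in part~(i) to the closure property~(a), and then to obtain part~(ii) from \cref{lemmaprop}.

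Fix $T \in \EuScript{K}(\EuScript{S}) = \overline{\EuScript{S}}$. Since addition and scalar multiplication are continuous for the product topology on $X^X$, a pointwise limit of linear maps is linear, so $T$ is linear and the whole issue is continuity. I would handle case~(d) directly: the equicontinuity condition ``for every zero neighbourhood $V$ there is a zero neighbourhood $U$ with $SU \subseteq V$ for every $S$'' is preserved under pointwise limits, once $V$ is shrunk to a closed zero neighbourhood contained in it, so the pointwise closure of an equicontinuous family is equicontinuous, and in particular lies in $\mathscr{L}(X)$. It then suffices to show that~(b) and~(c) each imply~(a), and that~(a) implies the assertion.

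The implication (b)$\,\Rightarrow\,$(a) is the Banach--Steinhaus theorem: in a barrelled space a pointwise bounded family $M \subseteq \mathscr{L}(X)$ is equicontinuous, hence so is its pointwise closure $\overline{M} \subseteq X^X$ by the remark above, so $\overline{M} \subseteq \mathscr{L}(X)$. For (c)$\,\Rightarrow\,$(a), which I expect to be the one step requiring care, assume $X$ carries $\sigma(X,X')$ with $X'$ the dual of a barrelled topology $\tau$ on $X$, and let $M \subseteq \mathscr{L}(X,\sigma(X,X'))$ be $\sigma(X,X')$-pointwise bounded. Here I would invoke that a linear self-map of $X$ is $\sigma(X,X')$-continuous precisely when it admits a transpose $X' \to X'$. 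Given a net $(T_\alpha) \subseteq M$ with $T_\alpha \to T$ in $X^X$ and a fixed $y \in X'$, the functionals $x \mapsto \langle T_\alpha x, y\rangle = \langle x, T_\alpha' y\rangle$ converge pointwise to $x \mapsto \langle T x, y\rangle$; moreover $(T_\alpha' y)_\alpha$ is a $\sigma(X',X)$-bounded net in $X'$, because for each fixed $x$ the scalars $\langle x, T_\alpha' y\rangle = \langle T_\alpha x, y\rangle$ lie in the bounded set $\{\langle Sx, y\rangle : S \in M\}$. As $\tau$ is barrelled, $\sigma(X',X)$-bounded subsets of $X'$ are $\tau$-equicontinuous, so the pointwise limit $x \mapsto \langle T x, y\rangle$ is $\tau$-continuous, i.e.\ lies in $X'$; calling it $T'y$ exhibits a transpose of $T$, whence $T$ is weakly continuous and $\overline{M} \subseteq \mathscr{L}(X,\sigma(X,X'))$.

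With~(a) available, applying it to $M = \EuScript{S}$ — whose orbits $\EuScript{S}x$ are relatively compact, hence bounded, so that $\EuScript{K}(\EuScript{S})$ is the compact operator semigroup under consideration (cf.\ \cref{lemmaprop}\,(iii)) — gives $\EuScript{K}(\EuScript{S}) = \overline{\EuScript{S}} \subseteq \mathscr{L}(X)$, which is~(i). For~(ii): a Montel space is barrelled, so~(i)(b) yields $\EuScript{K}(\EuScript{S}) \subseteq \mathscr{L}(X)$, and then $\EuScript{K}(\EuScript{S})$ is semitopological by \cref{lemmaprop}\,(iv); finally, bounded subsets of a Montel space are relatively compact, so each orbit $\EuScript{S}x$ is relatively compact and $\EuScript{K}(\EuScript{S})$ is compact by \cref{lemmaprop}\,(iii). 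The substantive point is the transpose argument in case~(c); the remainder is bookkeeping around Banach--Steinhaus and \cref{lemmaprop}.
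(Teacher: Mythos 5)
Your proof is correct and follows essentially the same route as the paper's: it reduces (b) and (c) to the closure property (a) (the paper disposes of (d) by citing the theorem that the pointwise closure of an equicontinuous set is equicontinuous, which you prove directly), and settles the key case (c) by showing that the algebraic adjoint of a pointwise limit maps $X'$ into $X'$ using barrelledness --- you argue via $\tau$-equicontinuity of the $\sigma(X',X)$-bounded set $\{T_\alpha' y\}$, whereas the paper extracts a $\sigma(X',X)$-convergent subnet from its relative weak* compactness, two equivalent faces of the same fact, before invoking the same characterization of weak continuity. Note only that, like the paper, you tacitly assume $\EuScript{S}$ is pointwise bounded (you assert relatively compact orbits) when applying (a) to $M=\EuScript{S}$ and when deducing compactness in (ii); this hypothesis is not in the statement of the proposition but is needed by both arguments.
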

\begin{proof}
Clearly, condition (a) implies $\EuScript{K}(\EuScript{S}) \subset \mathscr{L}(X)$. 
We show that the spaces in (b) and (c) satisfy (a). For a barrelled space $X$ this is a consequence of the Banach--Steinhaus Theorem ({see} Theorem III.4.6 in \cite{Scha1999}).\\
Now consider a pointwise bounded set $M$ of $\sigma(X,X')${-}continuous operators and take $T_{\alpha} \in M$ for $\alpha \in A$ and $T \in X^X$ with $\lim_{\alpha} \langle T_{\alpha}x,x'\rangle = \langle Tx,x'\rangle$ for all $x \in X$ and $x' \in X'$. By passing to the (algebraic) adjoint operator $T^* \colon X' \longrightarrow X^*$, we obtain
\begin{align*}
T^*x'(x) = \lim_{\alpha} T_{\alpha}'x'(x)
\end{align*}
for $x' \in X'$ and $x \in X$.\\
For given $x' \in X'$ the net $(T_{\alpha}'x')_{\alpha \in A}$ is contained in the bounded set $M'x' = \{S'x'\mid S \in M\} \subset X'$ which is (since $X$ is barrelled) relatively $\sigma(X',X)$-compact (see the corollary to IV.1.6 in \cite{Scha1999}). Thus we find a subnet of $(T_{\alpha}x')_{\alpha \in A}$ converging in the $\sigma(X',X)${-}topology to an element $y' \in X'$ which yields $T^*x' = y' \in X'$. We obtain $T^*X' \subset X'$ which shows that $T$ is $\sigma(X,X')${-}continuous (IV.2.1 in \cite{Scha1999}) and so (ii) is proved.\\
In the situation of (d) we obtain $\EuScript{K}(\EuScript{S}) \subset \mathscr{L}(X)$ by Theorem III.4.3 of \cite{Scha1999}.\\
From the definition of a Montel space and part (i) we immediately obtain (ii).
\end{proof}
At the end of this section we prove some relations between different Köhler semigroups. We recall that a continuous mapping between two right topological semigroups is a \emph{homomorphism of right topological semigroups} if it is multiplicative. A surjective homomorphism is called \emph{epimorphism}.
\begin{proposition}\label{relations}
Let $\EuScript{S}_i \subset \mathscr{L}(X_i)$ be operator semigroups on locally convex spaces $X_i$ for $i=1,2$.
\begin{enumerate}[(i)]
\item Assume that $X_2$ is a subspace of $X_1$ and $\EuScript{S}_2 = \EuScript{S}_1|_{X_2}$. If $\EuScript{K}(\EuScript{S}_1)$ and $\EuScript{K}(\EuScript{S}_2)$ are compact, then
\begin{align*}
\EuScript{K}(\EuScript{S}_1) \longrightarrow \EuScript{K}(\EuScript{S}_2), \quad S \mapsto S|_{X_2}
\end{align*}
is an epimorphism of right topological semigroups.
\item Assume that $\Phi \in \mathscr{L}(X_1,X_2)$ is a surjective map and $\Psi\colon \EuScript{S}_1 \longrightarrow \EuScript{S}_2$ is an epimorphism of semigroups with $\Phi(Sx) = \Psi(S) \Phi(x)$ for all $S \in \EuScript{S}_1$ and $x \in X_1$. If $\EuScript{K}(\EuScript{S}_1)$ is compact, then
\begin{align*}
\hat{\Psi}\colon \EuScript{K}(\EuScript{S}_1) \longrightarrow \EuScript{K}(\EuScript{S}_2), \quad S \mapsto \hat{\Psi}(S)
\end{align*}
with $\hat{\Psi}(S)(\Phi(x))\coloneqq \Phi(Sx)$ for $S \in \EuScript{S}_1$ and $x \in X$ is an epimorphism of right topological semigroups with $\hat{\Psi}|_{\EuScript{S}_1} = \Psi$.
\end{enumerate}
\end{proposition}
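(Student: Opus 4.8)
The plan is to handle both parts by the same pattern: define the map by the obvious formula on the generating semigroup, show it is forced to be well defined on all of $\EuScript{K}(\EuScript{S}_1)$ by an invariance argument, and then read off continuity pointwise and surjectivity from the fact that a continuous image of a compact set is closed in a Hausdorff space. Throughout I use that the operation on a Köhler semigroup is composition of maps (\cref{lemmaprop}).

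For (i), fix $S \in \EuScript{K}(\EuScript{S}_1)$ and a net $(S_\alpha)$ in $\EuScript{S}_1$ with $S_\alpha \to S$ in $X_1^{X_1}$. The restrictions $S_\alpha|_{X_2}$ lie in $\EuScript{S}_2 \subseteq \EuScript{K}(\EuScript{S}_2)$, so by compactness of $\EuScript{K}(\EuScript{S}_2)$ some subnet satisfies $S_\alpha|_{X_2} \to T$ in $X_2^{X_2}$ with $T \in \EuScript{K}(\EuScript{S}_2)$. For $x \in X_2$ the net $S_\alpha x$ converges to $Sx$ in $X_1$ and, along the subnet, to $Tx$ in $X_2$ and hence in $X_1$; since $X_1$ is Hausdorff and $X_2$ carries the subspace topology, $Sx = Tx \in X_2$. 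Thus every $S \in \EuScript{K}(\EuScript{S}_1)$ preserves $X_2$ and $S|_{X_2} = T \in \EuScript{K}(\EuScript{S}_2)$, so $R\colon S \mapsto S|_{X_2}$ is a well-defined map into $\EuScript{K}(\EuScript{S}_2)$. It is continuous because for fixed $x \in X_2$ the assignment $S \mapsto R(S)x = Sx$ is evaluation at $x$, continuous $\EuScript{K}(\EuScript{S}_1) \to X_1$ with values in $X_2$, hence continuous into $X_2$; it is multiplicative because $S$ and $T$ preserve $X_2$, so $(S\circ T)|_{X_2} = S|_{X_2}\circ T|_{X_2}$; and it is surjective because $R(\EuScript{K}(\EuScript{S}_1))$ is compact, hence closed in $X_2^{X_2}$, and contains $R(\EuScript{S}_1) = \EuScript{S}_2$, hence contains $\overline{\EuScript{S}_2} = \EuScript{K}(\EuScript{S}_2)$.

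For (ii), first observe that $N \coloneqq \ker\Phi$ is a closed subspace of $X_1$ that is $\EuScript{S}_1$-invariant (from $\Phi(Sz) = \Psi(S)\Phi(z) = 0$ for $z \in N$), hence invariant under each $S \in \EuScript{K}(\EuScript{S}_1)$ since $Sz = \lim S_\alpha z \in \overline{N} = N$. Therefore, for $S \in \EuScript{K}(\EuScript{S}_1)$, the prescription $\hat{\Psi}(S)(\Phi x) \coloneqq \Phi(Sx)$ is independent of the chosen preimage and defines a map $X_2 \to X_2$ satisfying $\hat{\Psi}(S)\circ\Phi = \Phi\circ S$; surjectivity of $\Phi$ together with the intertwining identity forces $\hat{\Psi}(S) = \Psi(S)$ for $S \in \EuScript{S}_1$. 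Continuity of $\hat{\Psi}\colon \EuScript{K}(\EuScript{S}_1) \to X_2^{X_2}$ is as in (i): fixing $y = \Phi x$, the map $S \mapsto \hat{\Psi}(S)(y) = \Phi(Sx)$ is evaluation at $x$ followed by the continuous $\Phi$. Multiplicativity follows from
\[
\hat{\Psi}(ST)(\Phi x) = \Phi(STx) = \hat{\Psi}(S)\bigl(\Phi(Tx)\bigr) = \hat{\Psi}(S)\bigl(\hat{\Psi}(T)(\Phi x)\bigr)
\]
together with surjectivity of $\Phi$. Finally, continuity gives $\hat{\Psi}(\EuScript{K}(\EuScript{S}_1)) = \hat{\Psi}(\overline{\EuScript{S}_1}) \subseteq \overline{\hat{\Psi}(\EuScript{S}_1)} = \overline{\EuScript{S}_2} = \EuScript{K}(\EuScript{S}_2)$, and since $\hat{\Psi}(\EuScript{K}(\EuScript{S}_1))$ is compact, hence closed, and contains $\EuScript{S}_2$, it equals $\EuScript{K}(\EuScript{S}_2)$; so $\hat{\Psi}$ is an epimorphism of right topological semigroups extending $\Psi$ (and, incidentally, $\EuScript{K}(\EuScript{S}_2)$ is automatically compact).

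I expect the only real obstacle to be the two invariance statements — that an arbitrary element of $\EuScript{K}(\EuScript{S}_i)$ still maps $X_2$ into $X_2$, respectively $\ker\Phi$ into $\ker\Phi$. In (i) this is where the compactness of $\EuScript{K}(\EuScript{S}_2)$ is genuinely needed (to produce the limit $T$) and where one must use that $X_2$ has the subspace topology so that convergence in $X_2$ and in $X_1$ can be compared; in (ii) the analogous point is easy, since $\ker\Phi$ is closed and $\Phi$ is continuous. All remaining steps — continuity in the product topology, the homomorphism property, and surjectivity via closedness of compact images — are routine.
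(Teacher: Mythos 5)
Your argument is correct and follows essentially the same route as the paper: continuity plus compactness make the image closed, so it must coincide with the closure of $\EuScript{S}_2$, and multiplicativity is a direct computation. The only cosmetic differences are that you phrase (i) with nets rather than the closure formula $\pi(\overline{\EuScript{S}_1})=\overline{\pi(\EuScript{S}_1)}$, and that in (ii) you derive well-definedness from invariance of $\ker\Phi$ under $\EuScript{K}(\EuScript{S}_1)$, which tacitly uses that pointwise limits of linear maps are again linear --- true and easy, but the paper's direct computation via $\lim_\alpha\Psi(S_\alpha)\Phi(x)=\lim_\alpha\Psi(S_\alpha)\Phi(y)$ sidesteps even that.
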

\begin{proof}
We start with (i). The mapping given by 
\begin{align*}
\pi\colon \EuScript{K}(\EuScript{S}_1) \longrightarrow X_1^{X_2}, \quad S \mapsto S|_{X_2}
\end{align*}
is clearly continuous and by compactness of $\EuScript{K}({\EuScript{S}_1})$ closed. Compactness of $\EuScript{K}(\EuScript{S}_2)$ implies
\begin{align*}
\EuScript{K}(\EuScript{S}_2) = \overline{\EuScript{S}_2}^{X_2^{X_2}} =\overline{\EuScript{S}_2}^{X_1^{X_2}}
\end{align*}
and thus
\begin{align*}
\pi(\EuScript{K}(\EuScript{S}_1)) = \pi(\overline{\EuScript{S}_1}) = \overline{\pi(\EuScript{S}_1)}^{X_1^{X_2}} = \overline{\EuScript{S}_2}^{X_2^{X_2}} = \EuScript{K}(\EuScript{S}_2).
\end{align*}
Multiplicativity of the restriction map is trivial and (i) follows.\smallskip\\
We now prove (ii). To this end, we first show that
\begin{align*}
\hat{\Psi}\colon \EuScript{K}(\EuScript{S}_1) \longrightarrow X_2^{X_2}, \quad S \mapsto \hat{\Psi}(S)
\end{align*}
is well{-}defined. {Take $x, y\in X_1$ with $\Phi(x) = \Phi(y)$ and $S \in \EuScript{K}(\EuScript{S}_1)$ and take $S_{\alpha} \in \EuScript{S}_1$ for $\alpha \in A$ such that $\lim_{\alpha} S_{\alpha}z = Sz$ for each $z \in X_1$.} We then obtain
\begin{align*}
\Phi(Sx) &= \Phi\left(\lim_{\alpha} S_{\alpha}x\right) = \lim_{\alpha} \Phi(S_{\alpha}x) = \lim_{\alpha} \Psi(S_{\alpha}) \Phi(x) \\&=\lim_{\alpha} \Psi(S_{\alpha}) \Phi(y) = \Phi\left(\lim_{\alpha} S_{\alpha}y\right) = \Phi(Sy).
\end{align*}
This proves that $\hat{\Psi}$ is well-defined. Continuity of $\hat{\Psi}$ is obvious and by the same arguments as in (i) we obtain $\hat{\Psi}(\EuScript{K}(\EuScript{S}_1)) = \EuScript{K}(\EuScript{S}_2)$. To finish the proof, consider $S,T \in \EuScript{K}(\EuScript{S}_1)$.
We then have
\begin{align*}
\hat{\Psi}(ST)(\Phi(x)) = \Phi(STx) = \hat{\Psi}(S)(\Phi(Tx))= \hat{\Psi}(S)(\hat{\Psi}(T)(\Phi(x))
\end{align*}
for each $x \in X$ and thus $\hat{\Psi}(ST) = \hat{\Psi}(S)\hat{\Psi}(T)$.
\end{proof}
\section{Compact Operator Semigroups in Topological Dynamics}
We now apply the concepts and results {from} Section 2 {in} topological dynamics. 
\begin{definition}
{A \emph{topological dynamical system} is a pair $(K;S)$ of a compact space $K$ and a semitopological semigroup $S$ acting on $K$ such that the mapping
\begin{align*}
S \times K \longrightarrow K, \quad (s,x) \mapsto sx
\end{align*}
is continuous.}
\end{definition}
\color{red}Occasionally we write $\varphi_s$ for the continuous mapping $K\longrightarrow K, \, x \mapsto sx$ for $s \in S$.\color{black}
\begin{definition}\label{koopman}
Let $(K; S)$ be a topological dynamical system.
\begin{enumerate}[(i)]
\item To each $s \in S$ we associate the \emph{Koopman operator} $T_{s} \in \mathscr{L}(\mathrm{C}(K))$ by setting $T_{s}f(x) \coloneqq f(sx)$ for $s \in S$ and $x \in K$.
\item The \emph{Koopman semigroup} $\EuScript{T}_{{S}}$ associated to $(K;{S})$ is
\begin{align*}
\EuScript{T}_{S}\coloneqq \{T_s\mid s \in S\} \subset \mathscr{L}(\mathrm{C}(K)).
\end{align*}
\end{enumerate}
\end{definition} 
{For an introduction to the theory of dynamical systems we refer to \cite{Elli1969}, \cite{Brow1976} and \cite{Glas2008}. For an operator theoretic approach to this topic see \cite{EFHN2015}.}\\
We now consider the adjoint semigroup $\EuScript{T}_{S}' \coloneqq \{T_s'\mid s \in S\}  \subset \mathscr{L}(\mathrm{C}(K)')$ on the dual space $\mathrm{C}(K)'$ which we identify with the Banach lattice of regular Borel measures on $K$. We also identify the Banach lattice of discrete measures on $K$ with
\begin{align*}
\ell^1(K) \coloneqq \left\{(a_x)_{x \in K}\in \C^K\mmid \sum_{x \in K} |a_x| < \infty\right\}.
\end{align*}
Given an $S${-}invariant probability measure $\mu \in \mathrm{C}(K)'$ we identify the Banach lattice of measures absolutely continuous with respect to $\mu$ with $\mathrm{L}^1(K,\mu)$. We then obtain operator semigroups associated to the left{-}separating pairs $(\mathrm{C}(K)',\mathrm{C}(K))$, $(\ell^1(K),\mathrm{C}(K))$ and $(\mathrm{L}^1(K,\mu),\mathrm{C}(K))$.
\begin{lemma}\label{lemtopdynsem}
Consider a topological dynamical system $(K;S)$ and the corresponding Koopman semigroup $\EuScript{T}_S$. The following assertions are true.
\begin{enumerate}[(i)]
\item The semigroup $\EuScript{T}_{S}'$ has relatively $\sigma(\mathrm{C}(K)',\mathrm{C}(K))${-}compact convex orbits $\mathrm{co}\, \EuScript{T}_{S}'\mu$ for all $\mu \in \mathrm{C}(K)'$.
\item The space $\ell^1(K)$ is $\EuScript{T}_{S}'${-}invariant and the semigroup $\EuScript{T}_{S}'|_{\ell^1(K)}$ has relatively $\sigma(\ell^1(K),\mathrm{C}(K))${-}compact orbits $\EuScript{T}_{S}'(a_x)_{x \in K}$ for all\\
$(a_x)_{x \in K} \in \ell^1(K)$.
\item For each $S${-}invariant probability measure $\mu \in \mathrm{C}(K)'$ the space $\mathrm{L}^1(K,\mu)$ is $\EuScript{T}_{S}'${-}invariant and the semigroup $\EuScript{T}_{S}'|_{\mathrm{L}^1(K,\mu)}$ has relatively \\$\sigma(\mathrm{L}^1(K,\mu),\mathrm{C}(K))${-}compact convex orbits $\mathrm{co}\,\EuScript{T}_{S}'h$ for all $h \in \mathrm{L}^1(K,\mu)$.
\end{enumerate}
\end{lemma}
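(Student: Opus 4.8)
The plan is to use that each Koopman operator $T_s$ is a positive operator fixing the constant functions, hence a contraction on $\mathrm{C}(K)$, so that every $T_s'$ is a contraction on $\mathrm{C}(K)'$; concretely, $T_s'\mu$ is the image measure of $\mu$ under the continuous map $x\mapsto sx$, and $T_s'\delta_x=\delta_{sx}$ on Dirac measures. Part (i) is then immediate: $\mathrm{co}\,\EuScript{T}_S'\mu$ is contained in the ball of radius $\|\mu\|$ of $\mathrm{C}(K)'$, which is $\sigma(\mathrm{C}(K)',\mathrm{C}(K))$-compact by the Banach--Alaoglu theorem.

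For (ii) I would first note invariance: the image measure of a discrete measure $\sum_x a_x\delta_x$ under $x\mapsto sx$ is $\sum_x a_x\delta_{sx}$, again discrete and of total variation at most $\sum_x|a_x|$, so $\ell^1(K)$ is $\EuScript{T}_S'$-invariant. For relative compactness of the orbit of $(a_x)_{x\in K}\in\ell^1(K)$, I enumerate its support (which is at most countable) as $x_1,x_2,\dots$, put $a_n:=a_{x_n}$, and consider the map
\[
\Phi\colon\ \prod_{n}\overline{Sx_n}\ \longrightarrow\ \ell^1(K),\qquad (y_n)_n\ \longmapsto\ \sum_n a_n\delta_{y_n}.
\]
Dominated convergence for series shows that $\Phi$ is continuous when $\ell^1(K)$ carries $\sigma(\ell^1(K),\mathrm{C}(K))$, which coincides with the topology induced from the weak* topology of $\mathrm{C}(K)'$. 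Since the domain is compact by Tychonoff's theorem, the image $\Phi\bigl(\prod_n\overline{Sx_n}\bigr)$ is a $\sigma(\ell^1(K),\mathrm{C}(K))$-compact subset of $\ell^1(K)$, and it contains the orbit $\EuScript{T}_S'(a_x)_{x\in K}=\{\Phi((sx_n)_n)\mid s\in S\}$; hence that orbit is relatively compact.

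For (iii) the $S$-invariance of $\mu$ is used twice. First, if $\mu(B)=0$ then $\mu(s^{-1}B)=\mu(B)=0$, so for $h\in\mathrm{L}^1(K,\mu)$ we get $\int_B T_s'h\,d\mu=\int_{s^{-1}B}h\,d\mu=0$; hence $\mathrm{L}^1(K,\mu)$ is $\EuScript{T}_S'$-invariant and each $T_s'|_{\mathrm{L}^1(K,\mu)}$ is a positive contraction. Second---and this is the heart of the matter---the orbit $\EuScript{T}_S'h$ is uniformly $\mu$-integrable: given $\varepsilon>0$ pick $\delta>0$ with $\int_A|h|\,d\mu<\varepsilon$ whenever $\mu(A)<\delta$; then for $\mu(B)<\delta$ we have $\mu(s^{-1}B)<\delta$, so by positivity of $T_s'$,
\[
\int_B|T_s'h|\,d\mu\ \le\ \int_B T_s'|h|\,d\mu\ =\ \int_{s^{-1}B}|h|\,d\mu\ <\ \varepsilon
\]
uniformly in $s$. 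By the Dunford--Pettis theorem the orbit is relatively weakly compact in $\mathrm{L}^1(K,\mu)$, and since boundedness and uniform integrability pass to convex hulls, so is $\mathrm{co}\,\EuScript{T}_S'h$; a fortiori $\mathrm{co}\,\EuScript{T}_S'h$ is relatively $\sigma(\mathrm{L}^1(K,\mu),\mathrm{C}(K))$-compact, because $\mathrm{C}(K)\subseteq\mathrm{L}^\infty(K,\mu)$ makes $\sigma(\mathrm{L}^1(K,\mu),\mathrm{C}(K))$ coarser than the weak topology.

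The norm estimates and the invariance checks are routine. The genuine obstacle is that forming $\sigma$-closures inside $\mathrm{C}(K)'$ can destroy the structure defining $\ell^1(K)$ and $\mathrm{L}^1(K,\mu)$: a weak* limit of discrete measures need not be discrete, and a weak* limit of $\mu$-continuous measures need not be $\mu$-continuous. Both (ii) and (iii) therefore reduce to exhibiting a compact set visibly contained in the right subspace---the explicit parametrization $\Phi$ in (ii) and a uniform-integrability argument in (iii), the latter relying decisively on the $S$-invariance of $\mu$. This also explains why (ii) only asserts relative compactness of orbits and not of convex orbits: the weak* closure of $\mathrm{co}\,\EuScript{T}_S'\delta_x$ is the set of all probability measures on $\overline{Sx}$, which typically leaves $\ell^1(K)$.
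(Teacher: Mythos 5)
Your proof is correct, and in parts (ii) and (iii) it deviates from the paper's route in ways worth noting. Part (i) is identical (contractions plus Banach--Alaoglu). In (ii) the paper proves relative compactness by extracting from an arbitrary net $(T_{s_\alpha}')$ a subnet with $s_\beta \to s$ pointwise in the compact space $K^K$ and then verifying by an explicit tail estimate that $T_{s_\beta}'(a_x)_{x\in K}$ converges to the pushforward $(b_x)_{x\in K}$ with $b_x=\sum_{s(y)=x}a_y$; you instead exhibit the compact superset $\Phi\bigl(\prod_n\overline{Sx_n}\bigr)$ directly, with the same ``finitely many atoms carry most of the mass'' estimate now appearing as the continuity of $\Phi$. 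This is essentially the same computation, but your packaging avoids the subnet extraction and makes transparent why the limit stays discrete. In (iii) the divergence is more substantive: the paper invokes the compactness of the set of bi-Markov operators on $\mathrm{L}^1(K,\mu)$ in the weak operator topology (Theorem 13.8 of \cite{EFHN2015}) applied to $\mathrm{co}\,\EuScript{T}_S'|_{\mathrm{L}^1(K,\mu)}$, whereas you prove relative weak compactness of the orbit from scratch via uniform integrability and Dunford--Pettis, observing that $\mu(s^{-1}B)=\mu(B)$ makes the integrability estimate uniform in $s$ and that uniform integrability passes to convex hulls. Your version is more self-contained and isolates precisely where the invariance of $\mu$ is used (once for $\EuScript{T}_S'$-invariance of $\mathrm{L}^1(K,\mu)$, once for the uniform estimate), at the cost of reproving a special case of the cited theorem; both arguments conclude identically by passing from $\sigma(\mathrm{L}^1,\mathrm{L}^\infty)$ to the coarser Hausdorff topology $\sigma(\mathrm{L}^1,\mathrm{C}(K))$. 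Your closing remark on why (ii) cannot be upgraded to convex orbits is correct and is implicit in the paper's later distinction between the Ellis and convex K\"ohler semigroups.
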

\begin{proof}
The first assertion holds since all operators in  $\EuScript{T}_{\EuScript{S}}$ are contractions and {the unit ball of $\mathrm{C}(K)'$ is weak* compact by the Banach--Alaoglu theorem (see Theorem III.4.3 of \cite{Scha1999}).}\smallskip\\
Since $T_s'\delta_x = \delta_{sx}$ for $s \in S$ and $x \in K$ we obtain that $\ell^1(K)$ is $\EuScript{T}_S'${-}invariant. Now take $(a_x)_{x \in K} \in \ell^1(K)$ and a net $(T_{s_{\alpha}})_{\alpha \in A}$. Since $K^K$ is compact for the product topology, we find a subnet $(s_{\beta})_{\beta \in B}$ of $(s_{\alpha})_{\alpha \in A}$ such that $(s_{\beta}x)_{\beta \in B}$ converges to some element $s(x) \in K$ for each $x \in K$.
Now consider $(b_x)_{x \in K} \in \ell^1(K)$ given by
\begin{align*}
b_x = \sum_{\substack{y \in K\\ s(y) = x}}a_y
\end{align*}
for $x \in K$. Let $\varepsilon > 0$, $f \in \mathrm{C}(K)$ and choose $x_1,...,x_N \in K$ with 
\begin{align*}
\sum_{x \notin \{x_1,...,x_N\}}|a_x| \cdot 2\|f\| \leq \varepsilon.
\end{align*}
We find $\beta_0 \in B$ with $|f(s_{\beta}x_j)-f(s(x_j))| \cdot \sum_{k=1}^N |a_{x_k}| \leq \varepsilon$ for all $\beta \geq \beta_0$ and $j \in \{1,...,N\}$. We thus obtain 
\begin{align*}
\left| \langle f, T_{s_{\beta}}'(a_x)_{x \in K}-(b_x)_{x \in K}\rangle \right| \leq \sum_{x \in K}|a_x| \cdot |f(s_{\beta}x)-f(s(x))| \leq 2 \varepsilon
\end{align*}
for all $\beta \geq \beta_0$. This shows $\lim_{\beta} T_{s_{\beta}}'(a_x)_{x \in K}= (b_x)_{x \in K}$.\smallskip\\
Finally consider an $S${-}invariant probability measure $\mu \in \mathrm{C}(K)'$. If a measure $\nu \in \mathrm{C}(K)'$ is absolutely continuous with respect to $\mu$, then so is $T_s'\nu$ for each $s \in S$. To see this, take a Borel measurable $\mu${-}null set $A$. Since $\mu$ is invariant, we obtain $\mu(s^{-1}(A)) = 0$ and by absolute continuity of $\nu$ we conclude $T_s'\nu(A) = \nu(s^{-1}(A)) = 0$.\\
To check compactness we note that the set 
\begin{align*}
\mathrm{co}\, \EuScript{T}_S'|_{\mathrm{L^1(K,\mu)}} = \mathrm{co}\,\{T_s'|_{\mathrm{L}^1(K,\mu)}\mid s \in S\}
\end{align*}
consists of bi{-}Markov operators on $\mathrm{L}^1(K,\mu)$ and thus is relatively compact with respect to the weak operator topology on $\mathrm{L}^1(K,\mu)$ ({see} Theorem 13.8 in \cite{EFHN2015}). Therefore $
\mathrm{co}\,\EuScript{T}_S'h$ is relatively $\sigma(\mathrm{L}^1(K,\mu),\mathrm{L}^\infty(K,\mu))${-}compact and in particular relatively $\sigma(\mathrm{L}^1(K,\mu),\mathrm{C}(K))${-}compact for each $h \in \mathrm{L}^1(K,\mu)$.
\end{proof}
{We now introduce various enveloping semigroups.}
\begin{definition}\label{sgrtopdyn}
To a topological dynamical system $(K;S)$ we associate the following semigroups.
\begin{enumerate}[(i)]
\item The \emph{Köhler semigroup} $\EuScript{K}(K;S)\coloneqq \EuScript{K}(\EuScript{T}_{S}';\mathrm{C}(K)',\mathrm{C}(K))$.
\item The \emph{convex Köhler semigroup} $\EuScript{K}_{\mathrm{c}}(K;S)\coloneqq \EuScript{K}(\mathrm{co} \, \EuScript{T}_{S}';\mathrm{C}(K)',\mathrm{C}(K))$.
\item The \emph{Ellis semigroup} $\EuScript{E}(K;S)\coloneqq \EuScript{K}(\EuScript{T}_{S}';\ell^1(K),\mathrm{C}(K))$.
\item For an $S${-}invariant probability measure $\mu \in \mathrm{C}(K)'$ the \emph{Jacobs semigroup} $\EuScript{J}(K,\mu;S)\coloneqq \EuScript{K}(\EuScript{T}_{S}';\mathrm{L}^1(K,\mu),\mathrm{C}(K))$.
\item For an $S${-}invariant probability measure $\mu \in \mathrm{C}(K)'$ the \emph{convex Jacobs semigroup} $\EuScript{J}_{\mathrm{c}}(K,\mu;S)\coloneqq \EuScript{K}(\mathrm{co}\,\EuScript{T}_{S}';\mathrm{L}^1(K,\mu),\mathrm{C}(K))$.
\end{enumerate}
\end{definition}
\begin{remark}
In the proof of \cref{lemtopdynsem} we have already seen that $\EuScript{T}_S'$ consists of bi-Markov operators and this set is relatively compact with respect to the weak operator topology ({see} Theorem 13.8 in \cite{EFHN2015}). Since the $\sigma(\mathrm{L}^1(K,\mu),\mathrm{C}(K))$-topology is coarser than the $\sigma(\mathrm{L}^1(K,\mu),\mathrm{L}^{\infty}(K,\mu))${-}topology, we immediately obtain
\begin{align*}
&\EuScript{J}(K,\mu;S)= \EuScript{K}(\EuScript{T}_{S}';\mathrm{L}^1(K,\mu),\mathrm{L}^{\infty}(K,\mu))\subset \mathscr{L}(\mathrm{L}^1(K,\mu)),\\
&\EuScript{J}_{\mathrm{c}}(K,\mu;S)= \EuScript{K}(\mathrm{co}\,\EuScript{T}_{S}';\mathrm{L}^1(K,\mu),\mathrm{L}^{\infty}(K,\mu))\subset \mathscr{L}(\mathrm{L}^1(K,\mu)).
\end{align*}
In particular, the Jacobs and the convex Jacobs semigroup{s} are semitopo\-logical.
\end{remark}
\begin{remark}
Consider the classical Ellis semigroup $\mathrm{E}(K;S)$ of a {topological dynamical system} $(K;S)$ given as the closure \color{red}$\overline{\{\varphi_s \mid s \in S\}}\subset K^K$\color{black} ({see} \cite{Elli1960}). If we identify $K$ with its homeo\-morphic copy $\{\delta_x\mid x \in K\} \subset\mathrm{C}(K)'$ (where $\mathrm{C}(K)'$ is equipped with the weak* topology), one readily checks that the mapping
\begin{align*}
\EuScript{E}(K;S) \longrightarrow \mathrm{E}(K;S),\quad R \mapsto R|_K
\end{align*}
is an isomorphism of right topological semigroups (this is a simple consequence of the fact, that the linear hull of Dirac measures is norm dense in $\ell^1(K)$). Thus our Ellis semigroup is isomorphic to the classical one, but is now obtained as an operator semigroup.
\end{remark}
\begin{remark}
\cref{relations} shows that taking subsystems and factors of a topological dynamical system $(K;S)$ induces epimorphisms of the corresponding Köhler and Ellis semigroups.
\end{remark}
The following result is an immediate consequence of \cref{relations} (i) and shows the relations between these semigroups.
\begin{proposition}\label{relationstop}
Let $(K;S)$ be a topological dynamical system and $\mu \in \mathrm{C}(K)'$ an $S${-}invariant probability measure. We then have the following diagram of epimorphisms of right topological semigroups where the arrows are given by the canonical restriction maps.
\[
\xymatrix{
& \EuScript{K}(K;S) \ar[ld] \ar[rd] \ar@{}[r]|-*[@]{\subset}& \EuScript{K}_{{\mathrm{c}}}(K;S)\ar[rd]& \\
\EuScript{E}(K;S)& & \EuScript{J}(\mu) \ar@{}[r]|-*[@]{\subset}& \EuScript{J}_{{\mathrm{c}}}(\mu)
}\]
\end{proposition}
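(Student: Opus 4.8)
The plan is to deduce the whole diagram from \cref{relations}~(i) together with the compactness statements furnished by \cref{lemtopdynsem} and \cref{lemmaprop}~(iii). Each of the five nodes is the Köhler semigroup of an operator semigroup on $\mathrm{C}(K)'$ — namely $\EuScript{T}_S'$ or $\mathrm{co}\,\EuScript{T}_S'$ — formed with respect to a topology $\sigma(X_2,\mathrm{C}(K))$ for $X_2 \in \{\mathrm{C}(K)',\ell^1(K),\mathrm{L}^1(K,\mu)\}$. The first observation is that $\sigma(\ell^1(K),\mathrm{C}(K))$ and $\sigma(\mathrm{L}^1(K,\mu),\mathrm{C}(K))$ are exactly the subspace topologies inherited from the weak* topology $\sigma(\mathrm{C}(K)',\mathrm{C}(K))$, since the defining seminorms $\rho_f$ ($f \in \mathrm{C}(K)$) merely restrict.

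First I would record that all five semigroups are compact. From $\EuScript{T}_S'\mu \subseteq \mathrm{co}\,\EuScript{T}_S'\mu$, \cref{lemtopdynsem}~(i) gives that $\EuScript{T}_S'\mu$ and $\mathrm{co}\,\EuScript{T}_S'\mu$ are relatively $\sigma(\mathrm{C}(K)',\mathrm{C}(K))$-compact for every $\mu$, so \cref{lemmaprop}~(iii) makes $\EuScript{K}(K;S)$ and $\EuScript{K}_{\mathrm{c}}(K;S)$ compact; \cref{lemtopdynsem}~(ii) handles $\EuScript{E}(K;S)$ the same way, and \cref{lemtopdynsem}~(iii) handles $\EuScript{J}(\mu)$ and $\EuScript{J}_{\mathrm{c}}(\mu)$ (again using that a single orbit lies inside the corresponding convex orbit). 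The invariance assertions in \cref{lemtopdynsem} are precisely what ensures that $\EuScript{T}_S'|_{\ell^1(K)}$, $\EuScript{T}_S'|_{\mathrm{L}^1(K,\mu)}$, and the restrictions of $\mathrm{co}\,\EuScript{T}_S'$ to these subspaces are well-defined operator semigroups, so that the hypothesis $\EuScript{S}_2 = \EuScript{S}_1|_{X_2}$ of \cref{relations}~(i) is satisfied.

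Next I would invoke \cref{relations}~(i) three times, in each case with $X_1 = \mathrm{C}(K)'$ carrying its weak* topology: with $X_2 = \ell^1(K)$ and $\EuScript{S}_1 = \EuScript{T}_S'$ this produces the restriction epimorphism $\EuScript{K}(K;S) \longrightarrow \EuScript{E}(K;S)$; with $X_2 = \mathrm{L}^1(K,\mu)$ and $\EuScript{S}_1 = \EuScript{T}_S'$ it produces $\EuScript{K}(K;S) \longrightarrow \EuScript{J}(\mu)$; and with $X_2 = \mathrm{L}^1(K,\mu)$ and $\EuScript{S}_1 = \mathrm{co}\,\EuScript{T}_S'$ — which is again a semigroup, with $(\mathrm{co}\,\EuScript{T}_S')|_{\mathrm{L}^1(K,\mu)} = \mathrm{co}\,(\EuScript{T}_S'|_{\mathrm{L}^1(K,\mu)})$ — it produces $\EuScript{K}_{\mathrm{c}}(K;S) \longrightarrow \EuScript{J}_{\mathrm{c}}(\mu)$. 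For the two horizontal inclusions, $\EuScript{T}_S' \subseteq \mathrm{co}\,\EuScript{T}_S'$ gives $\EuScript{K}(K;S) = \overline{\EuScript{T}_S'} \subseteq \overline{\mathrm{co}\,\EuScript{T}_S'} = \EuScript{K}_{\mathrm{c}}(K;S)$ and likewise $\EuScript{J}(\mu) \subseteq \EuScript{J}_{\mathrm{c}}(\mu)$; these inclusions are obviously continuous and multiplicative, hence homomorphisms of right topological semigroups. Commutativity of the diagram is immediate, as every arrow is a restriction or inclusion of maps defined on $\mathrm{C}(K)'$, and a composition of such is the restriction to the smaller space.

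The argument is essentially bookkeeping, so I do not anticipate a real obstacle; the only points needing a little care are the identification of the subspace topologies on $\ell^1(K)$ and $\mathrm{L}^1(K,\mu)$ with $\sigma(\ell^1(K),\mathrm{C}(K))$ and $\sigma(\mathrm{L}^1(K,\mu),\mathrm{C}(K))$, and the verification — all prepared by \cref{lemtopdynsem} — that for each arrow the two Köhler semigroups involved are compact and that $\EuScript{S}_2 = \EuScript{S}_1|_{X_2}$ holds.
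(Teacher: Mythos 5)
Your proposal is correct and follows exactly the route the paper intends: the paper gives no separate proof, stating only that the diagram is an immediate consequence of \cref{relations}~(i), and your argument supplies precisely the bookkeeping that claim presupposes (compactness of all five semigroups via \cref{lemtopdynsem} and \cref{lemmaprop}~(iii), the invariance of $\ell^1(K)$ and $\mathrm{L}^1(K,\mu)$, and the identification of the subspace topologies). Nothing is missing.
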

It is natural to ask when these epimorphisms are isomorphisms. If the topological dynamical system $(K;S)$ is \emph{weakly almost periodic}, i.e., if $\EuScript{T}_Sf$ is relatively weakly compact for each $f \in \mathrm{C}(K)$, we obtain the following.
\begin{proposition}
{If $(K;S)$ is a weakly almost periodic system, then the canonical epimorphism $\EuScript{K}(K;S) \longrightarrow \EuScript{E}(K;S)$ is an isomorphism.\\
If moreover $\mu \in \mathrm{C}(K)'$ is a strictly positive invariant probability measure, then the canonical epimorphisms $\EuScript{K}(K;S) \longrightarrow \EuScript{J}(\mu)$ and $\EuScript{K}_\mathrm{c}(K;S) \longrightarrow \EuScript{J}_\mathrm{c}(\mu)$ are also isomorphisms.}
\end{proposition}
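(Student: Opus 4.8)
\emph{Proof strategy.} Both assertions have the same structure. By \cref{relationstop} the map in question is a continuous epimorphism of compact right topological semigroups---the relevant domains and ranges are compact by \cref{lemmaprop}(iii) together with \cref{lemtopdynsem}---so it suffices to prove injectivity: a continuous bijection of compact Hausdorff spaces is a homeomorphism, and being multiplicative it is then an isomorphism of right topological semigroups.

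The crucial point is that weak almost periodicity forces every element of $\EuScript{K}(K;S)$, and of $\EuScript{K}_{\mathrm{c}}(K;S)$, to be a $\sigma(\mathrm{C}(K)',\mathrm{C}(K))${-}continuous \emph{linear} operator on $\mathrm{C}(K)'$. Indeed, since $(K;S)$ is weakly almost periodic the orbits $\overline{\EuScript{T}_Sf}$ are weakly compact in $\mathrm{C}(K)$, and hence so are the orbits $\overline{\mathrm{co}\,\EuScript{T}_Sf}$ by Krein's theorem (the closed convex hull of a weakly compact set in a Banach space is weakly compact). As the $T_s$ are contractions, the closures $\EuScript{W}$ of $\EuScript{T}_S$ and $\EuScript{W}_{\mathrm{c}}$ of $\mathrm{co}\,\EuScript{T}_S$ in $\mathscr{L}(\mathrm{C}(K))$ for the weak operator topology are compact subsets of $\mathscr{L}(\mathrm{C}(K))$. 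Now take $R \in \EuScript{K}(K;S)$ and operators $A_\alpha \in \EuScript{T}_S$ with $A_\alpha' \to R$ pointwise on $\mathrm{C}(K)'$; passing to a subnet with $A_\beta \to B \in \EuScript{W}$ in the weak operator topology, from $\langle A_\beta f, \nu\rangle = \langle f, A_\beta'\nu\rangle$ we get $\langle Bf, \nu\rangle = \langle f, R\nu\rangle$ for all $f \in \mathrm{C}(K)$ and $\nu \in \mathrm{C}(K)'$, that is $R = B'$. So $R$ is the adjoint of a bounded operator, hence weak*-continuous and linear; replacing $\EuScript{T}_S$ and $\EuScript{W}$ by $\mathrm{co}\,\EuScript{T}_S$ and $\EuScript{W}_{\mathrm{c}}$ gives the same for $\EuScript{K}_{\mathrm{c}}(K;S)$.

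Injectivity then follows from a density argument. If $R_1, R_2 \in \EuScript{K}(K;S)$ restrict to the same operator on $\ell^1(K)$, then $R_1\delta_x = R_2\delta_x$ for every $x \in K$; since the weak*-closed linear span of $\{\delta_x : x \in K\}$ is all of $\mathrm{C}(K)'$, weak*-continuity and linearity give $R_1 = R_2$, so $\EuScript{K}(K;S) \to \EuScript{E}(K;S)$ is injective. If, in addition, $\mu$ is strictly positive then $\mathrm{supp}\,\mu = K$, and for $x \in K$ the measures with $\mu$-densities $\mu(U_n)^{-1}\mathbf{1}_{U_n}$ along a neighbourhood base $(U_n)$ at $x$ (well defined because $\mu(U_n) > 0$) converge weak* to $\delta_x$; hence the weak*-closure of $\mathrm{L}^1(K,\mu)$, viewed as a subspace of $\mathrm{C}(K)'$, is a weak*-closed subspace containing every $\delta_x$ and therefore equals $\mathrm{C}(K)'$. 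Two elements of $\EuScript{K}(K;S)$, respectively of $\EuScript{K}_{\mathrm{c}}(K;S)$, that agree on $\mathrm{L}^1(K,\mu)$ must then coincide, which yields injectivity of $\EuScript{K}(K;S) \to \EuScript{J}(\mu)$ and of $\EuScript{K}_{\mathrm{c}}(K;S) \to \EuScript{J}_{\mathrm{c}}(\mu)$.

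The main obstacle is the second paragraph: the hypothesis only provides weak relative compactness of orbits in $\mathrm{C}(K)$, and converting this into weak*-continuity of the a priori merely set-theoretic limit maps on $\mathrm{C}(K)'$ is exactly where one must return to $\mathrm{C}(K)$, use the uniform boundedness of $\EuScript{T}_S$, and---for the convex semigroups---invoke Krein's theorem. The density arguments, and the step from a continuous bijection to an isomorphism via compactness, are routine.
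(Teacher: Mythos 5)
Your proposal is correct and follows essentially the same route as the paper: weak almost periodicity (plus Krein's theorem for the convex hull) makes the predual semigroup on $\mathrm{C}(K)$ relatively compact in the weak operator topology, so every element of $\EuScript{K}(K;S)$ (resp.\ $\EuScript{K}_{\mathrm{c}}(K;S)$) is the adjoint of a bounded operator and hence weak*-continuous, after which injectivity follows from the weak*-density of $\ell^1(K)$, resp.\ of $\mathrm{L}^1(K,\mu)$ when $\mu$ is strictly positive. Your write-up merely makes explicit the steps the paper leaves as ``one readily checks''.
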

\begin{proof}
Since $(K;S)$ is weakly almost periodic, $\EuScript{K}(\EuScript{T}_S;\mathrm{C}(K),\mathrm{C}(K)')$ is a compact semitopological semigroup and one readily checks that
\begin{align*}
\EuScript{K}(\EuScript{T}_S;\mathrm{C}(K),\mathrm{C}(K)') \longrightarrow \EuScript{K}(\EuScript{T}_S';\mathrm{C}(K)',\mathrm{C}(K))= \EuScript{K}(K;S), \quad S \mapsto S'
\end{align*}
is an isomorphism if the order of multiplication in $\EuScript{K}(\EuScript{T}_S;\mathrm{C}(K),\mathrm{C}(K)')$ is reversed. Thus each operator {in} $\EuScript{K}(K;S)$ is weak* continuous. Since {$\ell^1(K)$ is weak* dense $\mathrm{C}(K)'$, we obtain injectivity of the restriction map $\EuScript{K}(K;S) \longrightarrow \EuScript{E}(K;S)$.\smallskip\\
If $\mu \in \mathrm{C}(K)'$ is a strictly positive invariant probability measure, then $\mathrm{L}^1(K,\mu)$ is also weak* dense in $\mathrm{C}(K)'$, and thus $\EuScript{K}(K;S) \longrightarrow \EuScript{J}(\mu)$ is injective.\\
Since, by Krein's theorem ({see Theorem 11.4 in \cite{Scha1999} or Theorem C.11 in \cite{EFHN2015}}), the sets $\mathrm{co}\,\EuScript{T}_Sf$ are also relatively weakly compact, a similar argument shows that the remaining map is an isomorphism.}
\end{proof}
The question when the epimorphism $\EuScript{K}(K;S) \longrightarrow \EuScript{E}(K;S)$ is an isomorphism was first posed by J. S. Pym (\cite{Pym1989}) and then answered by A. Köhler (\cite{Koehl1995}) for {metrizable topological dynamical systems satisfying the following condition which is weaker than weak almost periodicity.}
\begin{definition}
A metric topological dynamical system $(K;S)$ is \emph{tame} if for each $f \in \mathrm{C}(K)$ the orbit $\EuScript{T}_Sf$ is relatively sequentially compact with respect to the product topology of $\C^K$.
\end{definition}
Such systems have been studied in detail by E. Glasner and M. Megrelishvili (\cite{GlMe2006}, \cite{Glas2006a}, \cite{Glas2007a}, \cite{Glas2007b}, \cite{GlMe2012}, \cite{GlMe2013} and \cite{GlMe2015}) and later by W. Huang in \cite{Huan2006}, D. Kerr and H. Li in \cite{KeLi2007} as well as {by} A. Romanov in \cite{Roma2013}.\\
{We recall that a topological space $X$ is a \emph{Fréchet--Urysohn space} (see page 53 of \cite{Enge1989}) if each subset $A \subset X$ satisfies
\begin{align*}
\overline{A} = \left\{x \in X\mid \textrm{ there is a sequence } (x_n)_{n \in \N} \textrm{ in } A \textrm{ with }x = \lim_{n \rightarrow \infty} x_n\right\}.
\end{align*}
A compact space $K$ is called \emph{Rosenthal compact} if it can be continuously embedded into the space of Baire {$1$} functions $\mathrm{B}_1(X)$ on a Polish space $X$. By results of J. Bou{r}gain, D. H. Fremlin and M. Talagrand ({see} \cite{BoFrTa1978}) every Rosenthal compact space is a Fréchet--Urysohn space.} Moreover, closed subspaces and countable products of Rosenthal {compact spaces} are Rosenthal compact (see Section c{-}17 of \cite{HaNaVa2003} for more properties of {such} spaces). This leads to the following characterizations of tameness.
\begin{proposition}\label{chartame}
For a metric topological dynamical system $(K;S)$ the following are equivalent.
\begin{enumerate}[(i)]
\item The system $(K;S)$ is tame.
\item $\EuScript{K}_{\mathrm{c}}(K;S)$ is a Rosenthal compact space.
\item $\EuScript{K}(K;S)$ is a Rosenthal compact space.
\item $\EuScript{E}(K;S)$ is a Fréchet--Urysohn space.
\end{enumerate}
\end{proposition}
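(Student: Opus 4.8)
The plan is to establish the cycle (i) $\Rightarrow$ (ii) $\Rightarrow$ (iii) $\Rightarrow$ (i) together with the separate equivalence (i) $\Leftrightarrow$ (iv). The ingredients I would draw on are: the Bourgain--Fremlin--Talagrand theorem \cite{BoFrTa1978}, in the form that a pointwise bounded, relatively sequentially compact family of continuous functions on a Polish space has its pointwise closure inside the Baire class one functions, together with its convex refinement (the pointwise closed convex hull of such a family still consists of Baire class one functions); the permanence of Rosenthal compactness under closed subspaces recalled above; the elementary facts that a compact Fréchet--Urysohn space is sequentially compact and that a continuous image of a sequentially compact space is sequentially compact; the observation that $\mathrm{C}(K)$ is separable (as $K$ is metrizable), so that the closed unit ball $B$ of $\mathrm{C}(K)'$ with the weak* topology is a metrizable compact space, hence Polish; and finally the identification $\EuScript{E}(K;S)\cong\overline{S}\subset K^K$ from the remark above together with the restriction epimorphism $\EuScript{K}(K;S)\longrightarrow\EuScript{E}(K;S)$ of \cref{relationstop}. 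Recall also that $\EuScript{K}_{\mathrm{c}}(K;S)$ is compact by \cref{lemmaprop} and \cref{lemtopdynsem}.

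For (i) $\Rightarrow$ (ii) I would fix a norm-dense sequence $(f_n)_{n\in\N}$ in the unit ball of $\mathrm{C}(K)$ and consider
\begin{align*}
\iota\colon \EuScript{K}_{\mathrm{c}}(K;S)\longrightarrow \C^{B\times\N}, \qquad R\longmapsto \bigl((\mu,n)\mapsto \langle R\mu,f_n\rangle\bigr).
\end{align*}
Since every element of $\EuScript{K}_{\mathrm{c}}(K;S)$ is a linear contraction, it is determined by its restriction to $B$; hence $\iota$ is injective and, being continuous on a compact space, a homeomorphic embedding. The crux is that $\iota$ maps into $\mathrm{B}_1(B\times\N)$. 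Writing $R=\lim_\alpha R_\alpha$ with $R_\alpha\in\mathrm{co}\,\EuScript{T}_S'$ and letting $J\colon\mathrm{C}(K)\longrightarrow\mathrm{C}(B)$ be the isometric embedding $Jh=(\mu\mapsto\langle\mu,h\rangle)$, the $n$-th slice $\mu\mapsto\langle R\mu,f_n\rangle$ is the pointwise limit on $B$ of the net $(Jh_\alpha)_\alpha$, where $h_\alpha\in\mathrm{co}\,\EuScript{T}_Sf_n$; thus it lies in the pointwise closure of $\mathrm{co}(J\EuScript{T}_Sf_n)$ in $\C^B$. Tameness makes $\EuScript{T}_Sf_n$ relatively sequentially compact in $\C^K$, and then the bounded convergence theorem makes $J\EuScript{T}_Sf_n$ relatively sequentially compact in $\C^B$; since $B$ is Polish, the Bourgain--Fremlin--Talagrand theorem and its convex version place the slice in $\mathrm{B}_1(B)$. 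As a function on $K\times\N$ with $\N$ discrete is of Baire class one exactly when all of its slices are, we get $\iota(R)\in\mathrm{B}_1(B\times\N)$, so $\EuScript{K}_{\mathrm{c}}(K;S)$ is a compact subspace of $\mathrm{B}_1(B\times\N)$, i.e.\ Rosenthal compact.

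The remaining implications are softer. For (ii) $\Rightarrow$ (iii) one only needs that $\EuScript{K}(K;S)=\overline{\EuScript{T}_S'}$ is a closed subspace of $\EuScript{K}_{\mathrm{c}}(K;S)=\overline{\mathrm{co}\,\EuScript{T}_S'}$ (both closures taken in the same space) and that closed subspaces of Rosenthal compact spaces are Rosenthal compact. For (iii) $\Rightarrow$ (i), a Rosenthal compact space is Fréchet--Urysohn, hence sequentially compact; the restriction epimorphism of \cref{relationstop} then makes $\EuScript{E}(K;S)\cong\overline{S}\subset K^K$ sequentially compact, and since the continuous map $K^K\to\C^K$, $g\mapsto f\circ g$, carries $\overline{S}$ onto the pointwise closure of $\EuScript{T}_Sf$ in $\C^K$, that closure is sequentially compact, so $(K;S)$ is tame. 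For (i) $\Rightarrow$ (iv) I would pick $[0,1]$-valued continuous functions $(\phi_m)_{m\in\N}$ separating the points of $K$, embed $K^K$ into $[0,1]^{K\times\N}$ by $g\mapsto((x,m)\mapsto\phi_m(g(x)))$ (a homeomorphic embedding, since the $\phi_m$ separate points), note that $S$ goes to $\{(x,m)\mapsto(T_s\phi_m)(x)\mid s\in S\}$, and observe that each slice of a point in the closure of this set is a pointwise limit of a net from $\EuScript{T}_S\phi_m$, hence lies in $\overline{\EuScript{T}_S\phi_m}\subset\mathrm{B}_1(K)$ by tameness and Bourgain--Fremlin--Talagrand; thus $\overline{S}$ embeds as a compact subspace of $\mathrm{B}_1(K\times\N)$, so $\EuScript{E}(K;S)$ is Rosenthal compact and in particular Fréchet--Urysohn. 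Finally (iv) $\Rightarrow$ (i) repeats the argument of (iii) $\Rightarrow$ (i) directly from the fact that $\EuScript{E}(K;S)$ is Fréchet--Urysohn, without invoking the restriction map.

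I expect (i) $\Rightarrow$ (ii) to be the real obstacle. The difficulty is that one must show the coordinate functions $\mu\mapsto\langle R\mu,f_n\rangle$ of an element $R$ of the \emph{convex} Köhler semigroup are of Baire class one on the whole weak* ball $B$, not merely on the embedded copy of $K$, while $R$ itself need not be weak*-continuous (it is only known to be a linear contraction). This is what forces the detour through the function space $\mathrm{C}(B)$, the transfer of sequential compactness from $\C^K$ to $\C^B$ via the bounded convergence theorem, and above all the stability of the Baire class one functions under pointwise closed convex hulls --- the one place where convexity of the Köhler semigroup is genuinely exploited. Everything else rests only on permanence properties of Rosenthal compact spaces and on the fact that compact Fréchet--Urysohn spaces are sequentially compact.
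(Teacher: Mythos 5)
Your proposal is correct, but it reorganizes the argument and, in the key step, makes a genuinely different trade-off from the paper. For (i) $\Rightarrow$ (ii) the paper restricts each function $R'f_k$ to the embedded copy of $K$ inside $\mathrm{C}(K)'$, so that the target Polish space is $K$ itself and the compact sets $\overline{\mathrm{co}\,\EuScript{T}_Sf_k}^{\C^K}\subset\mathrm{B}_1(K)$ are exactly what tameness plus Corollary 5G of \cite{BoFrTa1978} already controls; the price is that injectivity of the resulting map $\Phi$ is no longer obvious and must be recovered by an integration argument (the main theorem of \cite{Rose1977} is used to pass from pointwise convergence on $K$ to convergence of the integrals $\langle R_\alpha f_k,\mu\rangle$ for arbitrary $\mu$). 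You instead take the whole weak* unit ball $B$ as the Polish space, which makes injectivity of $\iota$ immediate, and you pay at the other end: you must show the slices $\mu\mapsto\langle R\mu,f_n\rangle$ are Baire class one on all of $B$, which you do by transporting relative sequential compactness from $\C^K$ to $\C^B$ via dominated convergence before invoking Bourgain--Fremlin--Talagrand and its convex refinement. Both routes lean on the same convexity input (Corollary 5G applied to $\mathrm{co}\,\EuScript{T}_Sf_n$), so the essential content is the same, just located differently. The remaining implications also differ in shape: the paper closes the cycle as (iii) $\Rightarrow$ (iv) $\Rightarrow$ (i), using the Fréchet--Urysohn property of $\mathrm{E}(K;S)$ to realize each cluster point of $\EuScript{T}_Sf$ as a sequential limit in $\mathrm{B}_1(K)$ and then citing Corollary 5G again to recover sequential compactness, whereas you prove (iii) $\Rightarrow$ (i) and (iv) $\Rightarrow$ (i) directly from the fact that sequential compactness passes through the continuous surjections $\EuScript{K}(K;S)\to\EuScript{E}(K;S)\to\overline{\EuScript{T}_Sf}$, which avoids the final appeal to \cite{BoFrTa1978} altogether, and you supply a separate direct embedding of $\mathrm{E}(K;S)$ into $\mathrm{B}_1(K\times\N)$ for (i) $\Rightarrow$ (iv) where the paper simply takes the continuous image of the already-established Rosenthal compact $\EuScript{K}(K;S)$. (Two cosmetic slips: the phrase ``a function on $K\times\N$'' in your (i) $\Rightarrow$ (ii) should read $B\times\N$, and your (i) $\Rightarrow$ (iv) could be obtained for free from (i) $\Rightarrow$ (iii) plus \cref{relationstop}, as the paper does.)
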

\begin{proof}
{As above we identify $K$ with its homeomorphic copy $\{\delta_x \mid x \in K\}$ in $\mathrm{C}(K)'$. In particular, $(R'f|_K)(x) = \langle f, R \delta_x\rangle$ for $R \in \EuScript{K}_{\mathrm{c}}(K;S)$, $f \in \mathrm{C}(K)$ and $x \in K$.}\smallskip\\
Assume that (i) holds. Then, for each $f \in \mathrm{C}(K)$ the set $\mathrm{co}\, \EuScript{T}_Sf$ is also relatively sequentially compact with respect to the product topology of $\C^X$ and its closure is contained in the space of Baire {$1$} functions $\mathrm{B}_1(K)$ ({see} Corollary 5G of \cite{BoFrTa1978}). Take a dense subset $\{f_n\mid n \in \N\} \subset \mathrm{C}(K)$ (which is possible, since $K$ is metric). {We will show that
\begin{align*}
\Phi\colon \EuScript{K}_{\mathrm{c}}(K;S) \longrightarrow \prod_{k \in \N} \overline{\mathrm{co}\,\EuScript{T}_Sf_k}^{\C^K}, \quad R \mapsto (R'f_k|_K)_{k \in \N}
\end{align*}
is a continuous embedding. Then, since countable products and closed subspaces of Rosenthal compact {spaces} are Rosenthal compact, this will finally prove that $\EuScript{K}_{\mathrm{c}}(K;S)$ is Rosenthal compact.}\\
We first check that $\Phi$ is well{-}defined. To this end, take a net $(R_{\alpha}')_{\alpha \in A} \in \mathrm{co}\,\EuScript{T}_S'$ converging to $R \in \EuScript{K}_{\mathrm{c}}(K;S)$. We then obtain{
\begin{align*}
(R'f|_K)(x) = \langle R'f,\delta_x\rangle = \lim_{\alpha} \langle R_{\alpha}f,\delta_x\rangle = \lim_{\alpha} (R_{\alpha}f)(x)
\end{align*}
for all $f \in \mathrm{C}(K)$ and $x \in K$.} Thus $\Phi$ is well{-}defined and it is clearly continuous. Given a net $(R_{\alpha}')_{\alpha \in A} \in \mathrm{co}\,\EuScript{T}_S'$ converging to $R \in \EuScript{K}_{\mathrm{c}}(K;S)$ the main theorem of \cite{Rose1977} implies{
\begin{align*}
\int_K (R'f_k)|_K \,\mathrm{d}\mu = \lim_{\alpha} \int_K R_{\alpha}f_k \, \mathrm{d}\mu = \lim_{\alpha} \langle R_{\alpha}f_k,\mu \rangle = \langle f_k,R\mu \rangle
\end{align*}
for each $k \in \N$ and $\mu \in \mathrm{C}(K)'$.} This shows that $\Phi$ is injective.\smallskip\\
Since closed subspaces of Rosenthal {compact spaces} are Rosenthal compact, we immediately obtain that (ii) implies (iii). Moreover, continuous images of compact Fréchet--Urysohn spaces are again Fréchet--Urysohn, so (iii) implies (iv) {by \cref{relationstop}}.\smallskip\\
Finally assume $\EuScript{E}(K;S)$ to be Fréchet--Urysohn. Take $f \in \mathrm{C}(K)$ and a net $(T_{s_\alpha}f)_{\alpha \in A}$ with $s_{\alpha} \in S$ for each $\alpha \in A$ converging pointwise to $g \in \C^K$. By passing to a subnet we may assume that $(s_{\alpha})_{\alpha \in A}$ converges pointwise to some $\psi \in \mathrm{E}(K;S)$ {such that} $g = f \circ \psi$. Since $\mathrm{E}(K;S)$ is a Fréchet--Urysohn space, we find a sequence $(s_n)_{n \in \N}$ in $S$ converging to $\psi$ whereby
\begin{align*}
g = f \circ \psi = \lim_{n \rightarrow \infty} f \circ \color{red}\varphi_{s_n}\color{black} \in \mathrm{B}_1(K).
\end{align*}
Thus $\overline{\,\EuScript{T}_Sf_k}^{\C^K}\subset \mathrm{B}_1(K)$ and Corollary 5G of \cite{BoFrTa1978} yields the claim.
\end{proof}
\begin{remark}
The equivalence of assertions (i) and (iv) is well{-}known. Our proof is based on the arguments of Glasner and Megrelishvili ({see} Theorem 3.2 in \cite{GlMe2006}), but extends the result to the convex Köhler semigroup showing thereby that the classes D2 and D3 of dynamical systems in \cite{Roma2013} are actually the same.
\end{remark}
{The next result is known (see Theorem 1.5 of \cite{Glas2006a} for the case of a group action), but for the sake of completeness we give a short proof.}
\begin{proposition}
Let $(K;S)$ be a tame metric topological dynamical system { and for $\psi \in \mathrm{E}(K;S)$ define the \emph{Koopman operator} $T_\psi \in \mathscr{L}(\mathrm{C}(K), \mathrm{B}_1(K))$ by $T_{\psi}f\coloneqq f \circ \psi$ for $\psi \in \mathrm{E}(K;S)$ and $f \in \mathrm{C}(K)$. Then
\begin{align*}
J\colon \mathrm{E}(K;S) \longrightarrow \EuScript{K}(K;S), \quad \psi \mapsto T_{\psi}'|_{\mathrm{C}(K)'}
\end{align*}
is an epimorphism of right topological semigroups and is the inverse} to the canonical restriction map
\begin{align*}
\EuScript{K}(K;S) \longrightarrow \mathrm{E}(K;S), \quad S \mapsto S|_K.
\end{align*}
\end{proposition}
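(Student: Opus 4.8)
The plan is to prove that $J$ and the canonical restriction map
\[
r\colon \EuScript{K}(K;S)\longrightarrow \mathrm{E}(K;S),\qquad R\longmapsto R|_K,
\]
are mutually inverse. By \cref{relationstop} together with the identification of $\EuScript{E}(K;S)$ with the classical Ellis semigroup $\mathrm{E}(K;S)$, the map $r$ is a continuous surjective homomorphism of right topological semigroups, and since $\EuScript{K}(K;S)$ and $\mathrm{E}(K;S)$ are compact Hausdorff, as soon as $r$ is shown to be bijective it is automatically a homeomorphism and hence an isomorphism of right topological semigroups; then so is $J=r^{-1}$, which in particular is the asserted epimorphism.

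I would first record an auxiliary observation. By tameness (exactly as in the proof of \cref{chartame}) every $\psi\in\mathrm{E}(K;S)$ is the pointwise limit on $K$ of a sequence $(s_n)_{n\in\N}$ in $S$; thus $f\circ\psi\in\mathrm{B}_1(K)$ for every $f\in\mathrm{C}(K)$, so $T_\psi\in\mathscr{L}(\mathrm{C}(K),\mathrm{B}_1(K))$ is meaningful, and the identity $\langle T_\psi'\mu,f\rangle=\langle\mu,f\circ\psi\rangle=\int_K f\circ\psi\,\mathrm{d}\mu$ shows that $J(\psi)$ is the pushforward operator $\mu\mapsto\psi_\ast\mu$, which maps $\mathrm{C}(K)'$ into itself (the map $\psi$ is of Baire class $1$, hence Borel, and every finite Borel measure on the compact metric space $K$ is regular). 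More generally, if $(s_n)_{n\in\N}$ is any sequence in $S$ with $s_n\to\psi$ pointwise on $K$, then $f\circ s_n\to f\circ\psi$ pointwise with $|f\circ s_n|\le\|f\|$, so the dominated convergence theorem gives $\int_K f\circ s_n\,\mathrm{d}\mu\to\int_K f\circ\psi\,\mathrm{d}\mu$ for all $\mu\in\mathrm{C}(K)'$ and $f\in\mathrm{C}(K)$; equivalently $T_{s_n}'\mu\to J(\psi)\mu$ in $\sigma(\mathrm{C}(K)',\mathrm{C}(K))$ for each $\mu$, that is, $T_{s_n}'\to J(\psi)$ in $\mathrm{C}(K)'^{\mathrm{C}(K)'}$, so in particular $J(\psi)\in\overline{\EuScript{T}_S'}=\EuScript{K}(K;S)$. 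Applied to Dirac measures this yields $J(\psi)\delta_x=\delta_{\psi(x)}$, hence $r(J(\psi))=\psi$, that is, $r\circ J=\mathrm{id}_{\mathrm{E}(K;S)}$.

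For the reverse identity $J\circ r=\mathrm{id}_{\EuScript{K}(K;S)}$, equivalently injectivity of $r$, fix $R\in\EuScript{K}(K;S)$ and set $\psi\coloneqq r(R)=R|_K$. Here tameness enters decisively: by \cref{chartame} the semigroup $\EuScript{K}(K;S)$ is Rosenthal compact, hence Fréchet--Urysohn, so there is a \emph{sequence} $(s_n)_{n\in\N}$ in $S$ with $T_{s_n}'\to R$ in $\EuScript{K}(K;S)$. Continuity of $r$ then forces $s_n\to\psi$ pointwise on $K$, and the auxiliary observation gives $T_{s_n}'\to J(\psi)$ as well; since $\EuScript{K}(K;S)$ is Hausdorff, $R=J(\psi)=J(r(R))$.

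Combining the two identities, $r$ is a bijection, so by the first paragraph $J=r^{-1}$ is an isomorphism (in particular an epimorphism) of right topological semigroups; multiplicativity of $J$ can alternatively be read off directly from $J(\psi)=\psi_\ast$ and $(\psi\phi)_\ast=\psi_\ast\phi_\ast$. The main obstacle is the injectivity of $r$: a generic element of $\EuScript{K}(K;S)$ is only a \emph{net} limit of operators $T_{s_\alpha}'$, and dominated convergence is false along nets. Tameness removes this difficulty through the Fréchet--Urysohn property of $\EuScript{K}(K;S)$ furnished by \cref{chartame}, which permits the reduction to a sequence; alternatively one could keep the net and invoke Rosenthal's theorem on pointwise compact families in $\mathrm{B}_1(K)$, as in the proof of \cref{chartame}, to pass the integrals to the limit.
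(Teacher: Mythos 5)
Your argument is correct, and it rests on the same two pillars as the paper's own proof---the Fréchet--Urysohn property of the enveloping semigroups supplied by tameness via \cref{chartame}, and Lebesgue's dominated convergence theorem to pass from pointwise convergence on $K$ to weak* convergence of the adjoint operators---but it is organized differently. The paper works entirely on the side of $J$: since $\mathrm{E}(K;S)$ is a sequential space, continuity of $J$ reduces to sequential continuity, which is exactly your dominated convergence computation; multiplicativity is immediate; and surjectivity follows from $J(S)=\EuScript{T}_S'$ together with compactness, after which the inverse relation with the restriction map $r$ is automatic from the (trivial) identity $r\circ J=\mathrm{id}$. You instead prove the two composition identities $r\circ J=\mathrm{id}$ and $J\circ r=\mathrm{id}$ directly and recover continuity of $J$ a posteriori from the compact--Hausdorff bijection argument. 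The cost of your route is that injectivity of $r$ invokes the Fréchet--Urysohn property of $\EuScript{K}(K;S)$ itself (assertion (iii) of \cref{chartame}, i.e., Rosenthal compactness), whereas the paper only needs the formally weaker assertion (iv) about $\mathrm{E}(K;S)$; since \cref{chartame} shows all of these are equivalent for tame systems, nothing is lost, but the paper's route is leaner. The payoff of your route is that the inverse relation, which the paper leaves implicit, is spelled out in full, including the identification $J(\psi)\mu=\psi_*\mu$ and the verification that this pushforward lands in $\mathrm{C}(K)'$ and in the closure $\overline{\EuScript{T}_S'}$.
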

\begin{proof}
A map from a sequential space to a Hausdorff space is continuous if and only if it is sequentially continuous ({see} Proposition 1.6.15 in \cite{Enge1989}). It is therefore a direct consequence of Lebesgue's Theorem that $J$ is continuous. Multiplicativity is trivial and, since $J(S) = \EuScript{T}_S'$, $J$ is surjective.
\end{proof}
\begin{remark}
Even for a tame system $(K;S)$ and a strictly positive invariant probability measure $\mu \in \mathrm{C}(K)'$ the epimorphism $\EuScript{K}(K;S) \longrightarrow \EuScript{J}(K,\mu;S)$ is not injective in general. In fact, if $S$ is abelian, then so is $\EuScript{J}(K,\mu;S)$ ({see} \cref{lemmaprop} (iv)), but $\EuScript{K}(K;S)$ generally not (see \cite{Glas2007a}, Example 4.5).
\end{remark}
In Section 5 we use the semigroups introduced above to study qualitative properties (e.g., mean ergodicity) of dynamical systems. 
\section{Mean Ergodic Semigroups}
Inspired by the approach of R. Nagel to mean ergodic semigroups ({see} \cite{Nage1973} and the supplement of Chapter 8 of \cite{EFHN2015}) we use techniques developed by A. Romanov in \cite{Roma2011} as well as M. Schreiber in \cite{Schr2013} and \cite{Schr2013a} to discuss mean ergodicity of operator semigroups on locally convex spaces (see also \cite{Eber1949}, \cite{Sato1977}, Section 2.1.2 in \cite{Kren1985}, \cite{GerKun2012}; see \cite{ABR2011} for mean ergodicity of one{-}parameter semigroups).  
\begin{definition}\label{defergodic}
Let $X$ be a locally convex space and $\EuScript{S} \subset \mathscr{L}(X)$ {be} an operator semigroup.
\begin{enumerate}[(i)]
\item A net $(T_{\alpha})_{\alpha \in A} \subset \overline{\mathrm{co}\, \EuScript{S}}^{X^X}$ is called
\begin{enumerate}[(a)]
\item a \emph{left ergodic net for} $\EuScript{S}$ if
\begin{align*}
\lim_{\alpha} (\mathrm{Id}-T)T_{\alpha}x = 0
\end{align*}
for each $x \in X$ and $T \in \EuScript{S}$.
\item a \emph{right ergodic net for} $\EuScript{S}$ if
\begin{align*}
\lim_{\alpha} T_{\alpha}(\mathrm{Id}-T)x = 0
\end{align*}
for each $x \in X$ and $T \in \EuScript{S}$.
\item a \emph{two{-}sided ergodic net for} $\EuScript{S}$ if it is left and right ergodic for $\EuScript{S}$.
\end{enumerate}
\item The semigroup $\EuScript{S}$ is called
\begin{enumerate}[(a)]
\item \emph{left mean ergodic} if each {left ergodic net} for $\EuScript{S}$ is pointwise convergent.
\item \emph{right mean ergodic} if each {right ergodic net} for $\EuScript{S}$ is pointwise convergent.
\end{enumerate}
\end{enumerate}
\end{definition}
We {recall some examples (see \cite{Schr2013a}, Examples 1.2).}
\begin{example}\label{ergodicoperatornets}
\begin{enumerate}[(i)]
\item Consider $\EuScript{S} = \{T^n\mid n \in \N_0\}$ for an operator $T \in \mathscr{L}(X)$ with bounded orbits $\EuScript{S}x$ for $x \in X$ on a locally convex space $X$. The sequence $(A_N)_{N \in \N}$ of \emph{Cesàro means} defined by
\begin{align*}
A_Nx \coloneqq \frac{1}{N}\sum_{n=0}^{N-1}T^nx
\end{align*}
for $x \in X$ and $N \in \N$ is a two{-}sided ergodic sequence for $\EuScript{S}$. 
\item For a pointwise bounded strongly continuous semigroup $(T(t))_{t \geq 0}$ (i.e., $t\mapsto T(t)x$ is continuous for each $x \in X$) on a quasi{-}complete space $X$ ({see} \cref{repetitiontvs} (iii)) we set 
\begin{align*}
A_sx \coloneqq \frac{1}{s}\int_0^sT(t)x\,\mathrm{d}t
\end{align*}
for $x \in X$ and $s \in (0, \infty)$ where the integral is understood in the sense of Bourbaki ({see} Proposition III.3.7 in \cite{Bour1965}). Then $(A_s)_{s > 0}$ is a two{-}sided {ergodic net for the semigroup $\{T(t)\mid t \geq 0\}$}.
\item The net of \emph{Abel means} $(S_r)_{r \in (1,\infty)}$ for an operator $T \in \mathscr{L}(X)$ with bounded orbits on a quasi{-}complete, barrelled locally convex space $X$ defined by
\begin{align*}
S_rx\coloneqq (r-1)\sum_{n=0}^{\infty}\frac{1}{r^{n+1}}T^nx
\end{align*}
for $x \in X$ and $r \in (1,\infty)$ is a two{-}sided {ergodic net} for $\EuScript{S} = \{T^n\mid n \in \N_0\}$.
\item {Example (ii) can be generalized as follows.}
{Consider a locally compact group $G$ with left Haar measure $\mu$ and let $S \subset G$ be a subsemigroup.
Assume further that there is a $\mu$\emph{{-}F{\o}lner net} $(F_{\alpha})_{\alpha \in A}$ in $S$, i.e., $F_{\alpha} \subset S$ is a compact set with positive finite measure for each $\alpha \in A$ and 
\begin{align*}
\lim_{\alpha} \frac{\mu(F_{\alpha} \triangle sF_{\alpha})}{\mu(F_{\alpha})} = 0
\end{align*} 
for each $s \in S$. Given a {pointwise bounded} representation
\begin{align*}
S \longrightarrow \mathscr{L}(X), \quad s \mapsto T(s)
\end{align*}
of $S$ on a quasi{-}complete {locally convex }space $X$ we can define 
\begin{align*}
\EuScript{F}_{\alpha}x\coloneqq \frac{1}{\mu(F_{\alpha})} \int_{F_{\alpha}} T(s)x\, \mathrm{d}\mu.
\end{align*}
for each $x \in X$ and each $\alpha \in A$ and thereby obtain a left {ergodic net} for {$\{T(s)\mid s \in S\}$}.}
\end{enumerate}
\end{example}
{Recall that a semitopological semigroup $S$ is \emph{left amenable} if the space $\mathrm{C}_{\mathrm{b}}(S)$ of bounded continuous functions on $S$ has a \emph{left invariant mean}, i.e., a positive element $m \in \mathrm{C}_{\mathrm{b}}(S)'$ with $m(\mathbbm{1}) = 1$ and $m(L_Sf) = m(f)$ for each $f \in \mathrm{C}_{\mathrm{b}}(S)$ and $s \in S$, where $L_sf(t) \defeq f(st)$ for every $t \in S$ (see Section 2.3 of \cite{BeJuMi1989}). \emph{Right amenability and (two-sided) amenability} are defined analogously.\\
We now assume that the semigroup $\EuScript{S} \subset \mathscr{L}(X)$ endowed with the topology of pointwise convergence is left amenable which} is always the case if $\EuScript{S}$ is abelian. In this situation we can characterize the convergence of all {left ergodic nets} through an algebraic property of the {Köhler} semigroup $\EuScript{K}(\mathrm{co}\, \EuScript{S})$.
\begin{theorem}\label{mainfirstchap}
For a locally convex space $X$ and a left amenable semigroup $\EuScript{S}\subset \mathscr{L}(X)$ with relatively compact convex orbits $\mathrm{co} \,\EuScript{S}x$ for all $x \in X$ the following are equivalent.
\begin{enumerate}[(i)]
\item The semigroup $\EuScript{S}$ is left mean ergodic.
\item The semigroup $\EuScript{K}(\mathrm{co}\,\EuScript{S})$ has a zero $Q$, i.e., $QS=SQ =Q$ for every $S \in \EuScript{K}(\mathrm{co}\,\EuScript{S})$.
\end{enumerate}
If these assertions hold, then $\lim T_{\alpha}x = Qx$ for all $x \in X$ and each left {ergodic net} $(T_{\alpha})_{\alpha \in A}$.
\end{theorem}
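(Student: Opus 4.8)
My plan is to prove the two implications separately. For \,(ii)$\Rightarrow$(i)\, together with the final assertion no amenability is needed. Put $\EuScript{C}\coloneqq\mathrm{co}\,\EuScript{S}$; by \cref{lemmaprop} the semigroup $\EuScript{K}(\EuScript{C})$ is compact and right topological. Let $(T_\alpha)$ be any left ergodic net and $R$ a cluster point of it in $\EuScript{K}(\EuScript{C})$; passing to a subnet with $T_\alpha\to R$ pointwise, for $T\in\EuScript{S}$ and $x\in X$ the continuity of $T$ gives $TT_\alpha x\to TRx$ while $(\mathrm{Id}-T)T_\alpha x\to 0$, so $TR=R$. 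By linearity $CR=R$ for $C\in\EuScript{C}$, and since $W\mapsto WR$ is continuous on $\EuScript{K}(\EuScript{C})$ also $WR=R$ for every $W\in\EuScript{K}(\EuScript{C})$; with the zero $Q$ this forces $R=QR=Q$. Hence every cluster point of $(T_\alpha)$ equals $Q$, so by compactness $T_\alpha x\to Qx$ for all $x$, which is (i) and also identifies the limit.

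For \,(i)$\Rightarrow$(ii)\, I would first manufacture an element of $\EuScript{K}(\EuScript{C})$ that absorbs left multiplication. Fix a left invariant mean $m$ on $\mathrm{C}_{\mathrm{b}}(\EuScript{S})$. For $x\in X$ the orbit map $S\mapsto Sx$ is continuous from $\EuScript{S}$ (with the topology of pointwise convergence) into the compact convex set $C_x\coloneqq\overline{\mathrm{co}\,\EuScript{S}x}$, so $m$ pushes forward along it to a regular Borel probability measure on $C_x$; let $Qx\in C_x$ be its barycenter, so $\langle Qx,x'\rangle=m(S\mapsto\langle Sx,x'\rangle)$ for all $x'\in X'$. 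Since $X'$ separates $X$, $Q$ is linear, and left invariance of $m$ gives $\langle TQx,x'\rangle=m(S\mapsto\langle TSx,x'\rangle)=\langle Qx,x'\rangle$ for $T\in\EuScript{S}$, i.e. $TQ=Q$; by linearity and continuity of $W\mapsto WQ$ on $\EuScript{K}(\EuScript{C})$ this extends to $WQ=Q$ for every $W\in\EuScript{K}(\EuScript{C})$. To place $Q$ itself inside $\EuScript{K}(\EuScript{C})$ I would use that the means on $\mathrm{C}_{\mathrm{b}}(\EuScript{S})$ are the weak* closure of the convex hull of the point evaluations: choosing $C_\alpha=\sum_j\lambda_j^\alpha S_j^\alpha\in\mathrm{co}\,\EuScript{S}$ with the corresponding finitely supported means converging to $m$, one gets $C_\alpha x\to Qx$ in $\sigma(X,X')$, and since $\{C_\alpha x\}\cup\{Qx\}\subset C_x$, on which (by compactness) the original topology coincides with the coarser Hausdorff topology $\sigma(X,X')$, in fact $C_\alpha x\to Qx$ in $X$; hence $C_\alpha\to Q$ in $X^X$ and $Q\in\overline{\mathrm{co}\,\EuScript{S}}^{X^X}=\EuScript{K}(\EuScript{C})$.

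The remaining and hardest step is to show $QW=Q$ for every $W\in\EuScript{K}(\EuScript{C})$, and this is where left mean ergodicity must be used. My plan is: given $W$, put $P\coloneqq QW\in\EuScript{K}(\EuScript{C})$ and note $TP=(TQ)W=QW=P$ for all $T\in\EuScript{S}$, so the two constant nets with values $Q$ and $P$ are each left ergodic for $\EuScript{S}$; splicing them (for a directed set $A$, the net on $A\times\{0,1\}$ ordered by the first coordinate, equal to $Q$ on $A\times\{0\}$ and to $P$ on $A\times\{1\}$) produces a single left ergodic net whose cluster points include both $Q$ and $P$, so by (i) it is pointwise convergent, whence $Q=P=QW$. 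Therefore $Q$ is a two-sided zero of $\EuScript{K}(\mathrm{co}\,\EuScript{S})$. The decisive trick is thus this splicing, which converts ``left mean ergodicity'' into uniqueness of the left-absorbing element; a secondary nuisance is making precise that the mean-barycenter $Q$ is a genuine member of $\EuScript{K}(\mathrm{co}\,\EuScript{S})$ and not merely a linear map with $Qx\in\overline{\mathrm{co}\,\EuScript{S}x}$, which the finite-support approximation above settles.
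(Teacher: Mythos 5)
Your proposal is correct and takes essentially the same route as the paper: your cluster-point analysis in (ii)$\Rightarrow$(i) reproduces \cref{charer}, your mean/barycenter construction of a left-absorbing $Q \in \EuScript{K}(\mathrm{co}\,\EuScript{S})$ plays the role of the Day fixed-point argument in \cref{kerneldesc}, and your splicing of the constant nets at $Q$ and $QW$ is the same alternating-net trick the paper uses to deduce uniqueness of the kernel element from left mean ergodicity. The only differences are cosmetic: the paper cites Day's theorem where you build $Q$ explicitly from the invariant mean, and it alternates approximants drawn from $\mathrm{co}\,\EuScript{S}$ where you splice constant nets directly, which is legitimate since \cref{defergodic} allows ergodic nets with values in $\overline{\mathrm{co}\,\EuScript{S}}^{X^X}$.
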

\cref{mainfirstchap} generalizes a result of A. Romanov (\cite{Roma2011}) for
\begin{align*}
\EuScript{S} = \{(T')^n\mid n \in \N_0\} \subset \mathscr{L}(X')
\end{align*}
and a dual Banach space $X'$ with the weak* topology. {For the proof we} use the methods developed by Romanov to cover our more general setting{, but need some lemmas.
In the first one} we describe the \emph{kernel} of $\EuScript{K}(\mathrm{co} \, \EuScript{S})$, i.e., the intersection of all ideals ({see} Notation 1.2.3 in \cite{BeJuMi1989}).
\begin{lemma}\label{kerneldesc}
Consider a locally convex space $X$ and a left amenable semigroup $\EuScript{S}\subset \mathscr{L}(X)$ with relatively compact convex orbits $\mathrm{co} \,\EuScript{S}x$ for $x \in X$. Then the kernel of $\EuScript{K}(\mathrm{co}\,\EuScript{S})$ is given by
\begin{align*}
\ker(\EuScript{K}(\mathrm{co}\, \EuScript{S})) &= \{Q \in \EuScript{K}(\mathrm{co}\,\EuScript{S})\mid \EuScript{S}Q = \{Q\}\}\\
&= \{Q \in \EuScript{K}(\mathrm{co}\,\EuScript{S})\mid Q \text{ is a right zero}\}.
\end{align*}
\end{lemma}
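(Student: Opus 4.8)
The plan is to prove the two claimed descriptions of $\ker(\EuScript{K}(\mathrm{co}\,\EuScript{S}))$ by first establishing the second equality (right zeros are exactly the $Q$ with $\EuScript{S}Q = \{Q\}$) and then showing this set coincides with the kernel. Throughout write $\EuScript{Q} \coloneqq \EuScript{K}(\mathrm{co}\,\EuScript{S})$, which by \cref{lemmaprop}~(iii) and the relative compactness of the convex orbits is a compact right topological semigroup, so its kernel is nonempty and equals the union of all minimal left ideals. I would begin by checking the easy inclusions in the second equality: if $Q$ is a right zero of $\EuScript{Q}$, then in particular $SQ = Q$ for all $S \in \EuScript{S} \subset \EuScript{Q}$, so $\EuScript{S}Q = \{Q\}$; conversely, if $\EuScript{S}Q = \{Q\}$, then also $(\mathrm{co}\,\EuScript{S})Q = \{Q\}$ by linearity of the maps $S \mapsto SQ$ for $S$ ranging over the operators, and since $S \mapsto SQ$ is continuous on $\EuScript{Q}$ (this is the point where right topologicity of $\EuScript{Q}$ is used — the map $R \mapsto RQ$ is continuous in the \emph{right} argument, i.e. we need continuity of $R \mapsto RQ$, which is precisely the statement that $\EuScript{Q}$ is right topological) and $\mathrm{co}\,\EuScript{S}$ is dense in $\EuScript{Q}$, we get $RQ = Q$ for all $R \in \EuScript{Q}$, i.e. $Q$ is a right zero.

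Next I would identify this set with the kernel. One inclusion is immediate: any right zero $Q$ satisfies $\EuScript{Q}Q = \{Q\}$, so $\{Q\}$ is a right ideal, hence $\EuScript{Q}Q\EuScript{Q} = Q\EuScript{Q}$ is a two-sided ideal contained in... more directly, $\{Q\}$ together with left translates: $\EuScript{Q}Q$ is a left ideal equal to $\{Q\}$ wait — $\EuScript{Q}Q = \{Q\}$ since $Q$ is a right zero, so $\{Q\}$ is a minimal left ideal and therefore lies in the kernel. For the reverse inclusion, take $Q \in \ker(\EuScript{Q})$; I want to show $\EuScript{S}Q = \{Q\}$. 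Here is where left amenability enters. Fix a left invariant mean $m$ on $\mathrm{C}_{\mathrm{b}}(\EuScript{S})$ and, for each $x \in X$, use it to produce an operator $P_x$ (or rather a single net of convex combinations) by a Ryll-Nardzewski / barycenter type argument: the map $\EuScript{S} \to \mathrm{co}\,\EuScript{S}x \subset X$, $S \mapsto SQx$, composed with functionals of $X$, lands in $\mathrm{C}_{\mathrm{b}}(\EuScript{S})$ provided $S \mapsto SQx$ is continuous, which holds because $S \in \EuScript{S} \subset \Lambda(\EuScript{Q})$ by \cref{lemmaprop}~(ii), so $S \mapsto S(Qx)$ is continuous; applying $m$ coordinatewise and using that the convex orbit closure is compact yields an element $\overline{Q}x \in \overline{\mathrm{co}\,\EuScript{S}}\,Qx$ that is $\EuScript{S}$-invariant: $S\overline{Q}x = \overline{Q}x$ for all $S \in \EuScript{S}$, by left invariance of $m$. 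This $\overline{Q}$ is again in $\EuScript{Q}$ (it is a limit of convex combinations of elements $SQ$, $S \in \EuScript{S}$, hence in $\overline{\mathrm{co}\,\EuScript{S}Q} \subset \EuScript{Q}Q \subset \EuScript{Q}$) and satisfies $\EuScript{S}\overline{Q} = \{\overline{Q}\}$, so by the first part $\overline{Q}$ is a right zero and $\overline{Q} \in \ker(\EuScript{Q})$.

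Now I use that $Q$ lies in the kernel together with the structure of the kernel of a compact right topological semigroup. Since $\overline{Q} \in \EuScript{Q}Q$ and $Q \in \ker(\EuScript{Q})$, the minimal left ideal $L \coloneqq \EuScript{Q}Q$ contains $\overline{Q}$; but $\overline{Q}$ is a right zero, so $\{\overline{Q}\} = \EuScript{Q}\overline{Q}$ is itself a left ideal contained in $L$, whence by minimality $L = \{\overline{Q}\}$ and therefore $Q = \overline{Q}$ (as $Q \in L$). Consequently $\EuScript{S}Q = \EuScript{S}\overline{Q} = \{\overline{Q}\} = \{Q\}$, which is the desired inclusion and completes the proof.

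The main obstacle I anticipate is the amenability/barycenter step producing $\overline{Q}$: one must be careful that the relevant scalar functions $S \mapsto \langle SQx, x'\rangle$ are genuinely in $\mathrm{C}_{\mathrm{b}}(\EuScript{S})$ — boundedness comes from relative compactness of $\mathrm{co}\,\EuScript{S}x$ (a compact set in a locally convex space is bounded, hence each $\rho_{x'}$ is bounded on it), and continuity from $\EuScript{S} \subset \Lambda(\EuScript{Q})$ — and that the $m$-barycenter of the $X$-valued map actually exists in $X$ and lands in the compact convex set $\overline{\mathrm{co}\,\EuScript{S}}\,Qx$; this is a standard weak-integral argument but needs the orbit compactness hypothesis in an essential way, and the left-invariance of $m$ must be matched correctly against the \emph{left} action $S \mapsto SQ$ to yield $\EuScript{S}$-invariance of $\overline{Q}$ rather than something weaker. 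Everything else is routine manipulation of ideals in a compact right topological semigroup.
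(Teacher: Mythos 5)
Your argument is correct and, at its core, runs on the same mechanism as the paper's proof: left amenability produces a common fixed point of the left translations $R \mapsto TR$ ($T \in \EuScript{S}$) inside the compact convex set $\EuScript{K}(\mathrm{co}\,\EuScript{S})$, and any such fixed point is automatically a right zero by density of $\mathrm{co}\,\EuScript{S}$ together with right topologicity. The differences are in the packaging. Where you build the fixed point by hand --- approximating a left invariant mean by convex combinations of point evaluations and taking a barycenter in the compact convex orbit closures --- the paper simply invokes Day's fixed point theorem (\cite{Day1961}, Theorem 3) for the continuous affine maps $\lambda_T\colon S \mapsto TS$ on $\EuScript{K}(\mathrm{co}\,\EuScript{S})$, which is precisely the statement you re-derive. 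And where you localize the argument to the minimal left ideal $\EuScript{K}(\mathrm{co}\,\EuScript{S})Q$ of a given kernel element $Q$ (finding the right zero $\overline{Q}$ inside it and concluding that the ideal is a singleton, hence $Q = \overline{Q}$), the paper argues globally: the set $I = \{Q \mid \EuScript{S}Q = \{Q\}\}$ is a nonempty two-sided ideal, so $\ker(\EuScript{K}(\mathrm{co}\,\EuScript{S})) \subset I$ because the kernel is by definition the intersection of all ideals. Your route costs a somewhat delicate barycenter construction, which you rightly identify as the main obstacle --- in particular one must check that a single approximating net of convex combinations works for all coordinates simultaneously, so that $\overline{Q}$ really lies in $\overline{\mathrm{co}\,\EuScript{S}Q} \subset \EuScript{K}(\mathrm{co}\,\EuScript{S})Q$ --- but in exchange it makes explicit that every minimal left ideal of $\EuScript{K}(\mathrm{co}\,\EuScript{S})$ is a singleton. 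One small inaccuracy: the continuity of $S \mapsto S(Qx)$ on $\EuScript{S}$ is not a consequence of $\EuScript{S} \subset \Lambda(\EuScript{K}(\mathrm{co}\,\EuScript{S}))$; it is simply the continuity of the evaluation map at the point $Qx$ in the product topology of $X^X$. This does not affect the validity of the argument.
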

\begin{proof}
Set $I\coloneqq \{Q \in \EuScript{K}(\mathrm{co}\,\EuScript{S})\mid \EuScript{S}Q = \{Q\}\}$ and consider the continuous mappings
\begin{align*}
\lambda_T\colon \EuScript{K}(\mathrm{co}\,\EuScript{S}) \longrightarrow \EuScript{K}(\mathrm{co}\,\EuScript{S}), \quad S \mapsto TS
\end{align*}
for $T \in \EuScript{S}$. Since $\EuScript{S}$ is left amenable, these mappings have a common fixed point (see \cite{Day1961}, Theorem 3) which yields $I \neq \emptyset$. {If $Q \in I$, then clearly $\mathrm{co} \,\EuScript{S} = \{Q\}$ and therefore $\EuScript{K}(\mathrm{co}\,\EuScript{S})Q = \{Q\}$ since the semigroup is right topological. Thus, each element of $I$ is a right zero.} Moreover, $I$ is an ideal and we thus have $\ker(\EuScript{K}(\mathrm{co}\, \EuScript{S})) \subset I$. On the other hand, each right zero $Q$ of $\EuScript{K}(\mathrm{co}\, \EuScript{S})$ is a minimal idempotent and thus satisfies 
\begin{align*}
\{Q\} = \EuScript{K}(\mathrm{co}\, \EuScript{S})Q \subset \ker(\EuScript{K}(\mathrm{co}\, \EuScript{S})),
\end{align*}
by Theorem 1.2.12 of \cite{BeJuMi1989}.
\end{proof}
\begin{lemma}\label{charer}
Consider a locally convex space $X$ and a left amenable semigroup $\EuScript{S}\subset \mathscr{L}(X)$ with relatively compact convex orbits $\mathrm{co} \,\EuScript{S}x$ for $x \in X$. For a net $(T_{\alpha})_{\alpha \in A} \subset \EuScript{K}(\mathrm{co} \, \EuScript{S})$ the following are equivalent.
\begin{enumerate}[(i)]
\item {The net $(T_{\alpha})_{\alpha \in A}$ is left ergodic.}
\item All accumulation points of $\{T_{\alpha}\mid \alpha \in A\}$ for the operator topology of pointwise convergence are contained in $\ker(\EuScript{K}(\mathrm{co}\, \EuScript{S}))$.
\end{enumerate}
\end{lemma}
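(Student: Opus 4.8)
The plan is to deduce both implications from the description of the kernel of $\EuScript{K}(\mathrm{co}\,\EuScript{S})$ obtained in \cref{kerneldesc}, together with compactness of $\EuScript{K}(\mathrm{co}\,\EuScript{S})$. Compactness holds by \cref{lemmaprop}~(iii), since $(\mathrm{co}\,\EuScript{S})x = \mathrm{co}\,\EuScript{S}x$ is relatively compact for every $x \in X$ by hypothesis; hence the net $(T_\alpha)_{\alpha \in A}$ has accumulation points, all of which lie in $\EuScript{K}(\mathrm{co}\,\EuScript{S})$. The one fact I will invoke repeatedly is that, by \cref{kerneldesc}, an element $R \in \EuScript{K}(\mathrm{co}\,\EuScript{S})$ belongs to $\ker(\EuScript{K}(\mathrm{co}\,\EuScript{S}))$ if and only if $TR = R$ for all $T \in \EuScript{S}$.

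For (i) $\Rightarrow$ (ii), let $R$ be an accumulation point of $(T_\alpha)_{\alpha \in A}$ and choose a subnet converging to $R$ in $X^X$. Fix $T \in \EuScript{S}$ and $x \in X$. Since $T \in \mathscr{L}(X)$, the operator $\mathrm{Id}-T$ is continuous, so $(\mathrm{Id}-T)T_\alpha x$ converges to $(\mathrm{Id}-T)Rx$ along the subnet; on the other hand left ergodicity gives $(\mathrm{Id}-T)T_\alpha x \to 0$ along the full net, hence along the subnet. As $X$ is Hausdorff, $(\mathrm{Id}-T)Rx = 0$. Since $x$ and $T$ were arbitrary, $TR = R$ for all $T \in \EuScript{S}$, and \cref{kerneldesc} yields $R \in \ker(\EuScript{K}(\mathrm{co}\,\EuScript{S}))$.

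For (ii) $\Rightarrow$ (i), I argue by contraposition. If $(T_\alpha)_{\alpha \in A}$ is not left ergodic, there exist $x \in X$, $T \in \EuScript{S}$ and a zero neighborhood $U$ in $X$ with $(\mathrm{Id}-T)T_\alpha x \notin U$ for a cofinal set of indices $\alpha$; passing to those indices yields a subnet which, by compactness of $\EuScript{K}(\mathrm{co}\,\EuScript{S})$, has a further subnet converging to some $R \in \EuScript{K}(\mathrm{co}\,\EuScript{S})$. Then $R$ is an accumulation point of $(T_\alpha)_{\alpha \in A}$, so by (ii) and \cref{kerneldesc} we get $TR = R$ and therefore $(\mathrm{Id}-T)Rx = 0$. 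But continuity of $\mathrm{Id}-T$ forces $(\mathrm{Id}-T)T_\alpha x \to (\mathrm{Id}-T)Rx = 0$ along the convergent subnet, so these vectors eventually lie in $U$, contradicting their choice. Hence (i) holds.

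There is no real obstacle here: the argument is pure compactness-and-subnet bookkeeping once \cref{kerneldesc} is available. The only points requiring a moment's attention are that the product topology on $X^X$ makes evaluation at a fixed $x$ continuous (so that $T_\alpha \to R$ implies $T_\alpha x \to Rx$ in $X$), and that the hypothesis $\EuScript{S} \subset \mathscr{L}(X)$ is exactly what allows one to move the limit past $\mathrm{Id}-T$; both are immediate.
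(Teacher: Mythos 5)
Your proof is correct and follows essentially the same route as the paper: both directions rest on the compactness of $\EuScript{K}(\mathrm{co}\,\EuScript{S})$ and on the characterization of the kernel from \cref{kerneldesc} as the set of $Q$ with $\EuScript{S}Q=\{Q\}$, with the forward implication obtained by passing limits through $\mathrm{Id}-T$ and the reverse implication by extracting a convergent subnet from indices witnessing the failure of left ergodicity. The only cosmetic difference is that the paper phrases the second half as directly producing an accumulation point outside the kernel rather than as a contradiction, which is the same argument.
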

\begin{proof}
Assume that $(T_{\alpha})_{\alpha \in A}$ is left ergodic and take $T \in \EuScript{S}$. The map
\begin{align*}
\EuScript{K}(\mathrm{co}\,\EuScript{S}) \longrightarrow X^X, \quad S \mapsto (\mathrm{Id}-T)S
\end{align*}
is continuous. Thus, for each accumulation point $Q$ of $(T_{\alpha})_{\alpha \in A}$, the operator $(\mathrm{Id}-T)Q$ is an accumulation point of $((\mathrm{Id}-T)T_{\alpha})_{\alpha \in A}$. The assumption yields $(\mathrm{Id}-T)Q= 0$ and therefore $Q = TQ$. Since $T \in \EuScript{S}$ was arbitrary, we obtain $Q \in \ker(\EuScript{K}(\mathrm{co}\, \EuScript{S}))$ by \cref{kerneldesc}.\\
Assume now that there is $T \in \EuScript{S}$ such that $((\mathrm{Id}-T)T_{\alpha})_{\alpha \in A}$ does not converge to zero. We then find a zero neighborhood $U$ and a subnet $(T_{\beta})_{\beta \in B}$ of $(T_{\alpha})_{\alpha \in A}$ with $(\mathrm{Id}-T)T_{\beta} \notin U$ for all $\beta \in B$. By compactness of $\EuScript{K}(\mathrm{co}\, \EuScript{S})$ we find an accumulation point $Q$ of this subnet satisfying $(\mathrm{Id}-T)Q \neq 0$. Hence $TQ \neq Q$ and thus $(T_{\alpha})_{\alpha \in A}$ has an accumulation point which is not contained in $\ker(\EuScript{K}(\mathrm{co}\, \EuScript{S}))$.
\end{proof}
\begin{proof}[of \cref{mainfirstchap}]
By \cref{kerneldesc} the second assertion is equivalent to $\ker(\EuScript{K}(\mathrm{co}\,\EuScript{S}))$ being the singleton $\{Q\}$ and thus \cref{charer} shows the implication ``(ii) $\Rightarrow$ (i)''.\\
Now suppose that (i) is valid and take $Q_1,Q_2 \in \ker(\EuScript{K}(\mathrm{co}\,\EuScript{S}))$. {Take} a family of seminorms $P$ generating the topology on $X$. We define a partial order on the set 
\begin{align*}
A\coloneqq \{(Y,M,k)\mid Y \subset X \text{ finite}, M \subset P \text{ finite}, k \in \N\}
\end{align*}
by saying that $(Y,M,k) \leq (\tilde{Y},\tilde{M},\tilde{k})$ if $Y \subset \tilde{Y}$, $M \subset \tilde{M}$ and $k \leq \tilde{k}$. This order turns $A$ into a directed set. For each triple $\alpha = (Y,M,k) \in A$ we find $T_{1,\alpha},T_{2,\alpha} \in \mathrm{co} \, \EuScript{S}$ with 
\begin{align*}
\rho(T_{i,\alpha}x-Q_ix) \leq \frac{1}{k}
\end{align*}
for all $x \in Y$, $\rho \in M$ and $i=1,2$. The net given by 
\begin{align*}
T_{\alpha}\coloneqq \begin{cases} T_{1,\alpha} & \text{ if } \alpha = (Y,M,2n) \text{ for } n \in \N,\\
T_{{2},\alpha} & \text{ if } \alpha = (Y,M,2n-1) \text{ for } n \in \N,
\end{cases}
\end{align*}
is left ergodic by \cref{charer} and hence convergent. This yields $Q_1 = Q_2$. 
\end{proof}
We now introduce different notions of mean ergodicity on barrelled spaces. Given a pointwise bounded operator semigroup $\EuScript{S} \subset \mathscr{L}(X)$ on such a space, the Köhler semigroup $\EuScript{K}(\mathrm{co}\,\EuScript{S}';X',X)$ for the convex hull of the adjoint semigroup
\begin{align*}
\EuScript{S}' = \{S'\mid S \in \EuScript{S}\}
\end{align*}
is a compact right topological semigroup. To apply the results obtained above we assume that $\EuScript{S}$ is right amenable{, hence $\EuScript{S}'$ is left amenable. For a net $(T_\alpha)_{\alpha \in A}$ which is right ergodic for $\EuScript{S}$ with respect to the $\sigma(X,X')$-topology the adjoint net $(T_\alpha')_{\alpha \in A}$ is left ergodic for $\EuScript{S}'$ with respect to the $\sigma(X',X)$-topology. }\\
{T}he following definitions are natural.
\begin{definition}
Let $X$ be a barrelled space. A semigroup $\EuScript{S} \subset \mathscr{L}(X)$ is called 
\begin{enumerate}[(i)]
\item \emph{weak* mean ergodic} if $\EuScript{S}'$ is left mean ergodic with respect to the $\sigma(X',X)${-}topology.
\item \emph{weakly mean ergodic} if $\EuScript{S}$ is right mean ergodic with respect to the $\sigma(X,X')${-}topology.
\item \emph{strongly mean ergodic} if $\EuScript{S}$ is right mean ergodic with respect to the given topology on $X$.
\end{enumerate}
\end{definition}
Applying \cref{mainfirstchap} to the $\sigma(X',X)${-}topology immediately gives a characterization of weak* mean ergodicity.
{Next} we characterize weak and strong mean ergodicity (see also Theorem 1.7 {in} \cite{Nage1973}) extending results of {M.} Schreiber {for operator semigroups on Banach spaces} {to barrelled locally convex} spaces ({see} Theorem 1.7 in \cite{Schr2013a}, see also Corollary 1 of \cite{Sato1977} for a similar result). For a familiy $\EuScript{T}$ of operators on a locally convex space $X$ we use the notation 
\begin{align*}
&\fix(\EuScript{T}) \defeq \{x \in X\mid Tx = x \textrm{ for each } T \in \EuScript{T}\},\\
&\mathrm{rg}(\EuScript{T}) \defeq \{y \in X\mid \textrm{there are } x \in X \textrm{ and } T \in \EuScript{T} \textrm{ with } Tx = y\}.
\end{align*}
\begin{theorem}\label{normweakly}
Consider a bounded right amenable semigroup $\EuScript{S} \subset \mathscr{L}(X)$ on a barrelled \color{red} quasi-complete \color{black} locally convex space {$(X,\tau)$}. Then the following assertions are equivalent.
\begin{enumerate}[(i)]
\item There is a two{-}sided {ergodic net} $(T_{\alpha})_{\alpha \in A}$ for $\EuScript{S}$ with respect to the weak topology such that $(T_{\alpha}x)_{\alpha \in A}$ converges weakly for each $x \in X$.
\item The semigroup $\EuScript{S}$ is weakly mean ergodic.
\item The semigroup $\EuScript{S}$ is strongly mean ergodic.
\item The semigroup $\EuScript{K}(\mathrm{co}\,\EuScript{S};X,X')$ has a zero $P$.
\item The semigroup $\EuScript{K}(\mathrm{co}\,\EuScript{S}';X',X)$ has a zero $Q$ which is weak* continuous.
\item The fixed space $\fix(\EuScript{S})$ separates $\fix(\EuScript{S}')$.
\item $X = \fix(\EuScript{S}) \oplus \overline{\mathrm{lin}} \rg(\mathrm{Id}- \EuScript{S})$.
\end{enumerate}
If one of the above {assertions} is valid, then $\lim_{\alpha} T_{\alpha}x = Px$ in weak (resp. $\tau$) topology for each operator net $(T_{\alpha})_{\alpha \in A}$ which is {right ergodic} for $\EuScript{S}$ with respect to the weak (resp. $\tau$) topology.
\end{theorem}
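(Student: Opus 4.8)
The plan is to reduce everything to the left amenable operator semigroup $\EuScript{S}'\subseteq\mathscr{L}(X')$ acting on $X'$ with the weak* topology $\sigma(X',X)$, where the abstract machinery of this section applies. First I would record the standing observations: since $\EuScript{S}$ is bounded and $X$ is barrelled, each orbit $\EuScript{S}'\varphi$ is equicontinuous, so $\mathrm{co}\,\EuScript{S}'\varphi$ sits inside a weak*-compact polar and is relatively $\sigma(X',X)$-compact; moreover right amenability of $\EuScript{S}$ is exactly left amenability of $\EuScript{S}'$ (the adjoint map is an anti-isomorphism of topological semigroups). Hence \cref{mainfirstchap}, \cref{kerneldesc} and \cref{charer} apply to $\EuScript{S}'$, and ``$\EuScript{S}'$ is weak* mean ergodic'' is equivalent to ``$\EuScript{K}(\mathrm{co}\,\EuScript{S}';X',X)$ has a zero $Q$''. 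I would also note that by \cref{jdlgscenario}(i)(c) one has $\EuScript{K}(\mathrm{co}\,\EuScript{S};X,X')\subseteq\mathscr{L}(X,\sigma(X,X'))$, so its elements are $\tau$-continuous, and that any pointwise limit of a net in the equicontinuous (Banach--Steinhaus) set $\mathrm{co}\,\EuScript{S}$ is again $\tau$-continuous.

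The adjoint correspondence $\EuScript{K}(\mathrm{co}\,\EuScript{S};X,X')\ni P\mapsto P'\in\EuScript{K}(\mathrm{co}\,\EuScript{S}';X',X)$ (continuous, order-reversing, sending a zero to a zero, and with $P'$ always weak* continuous, while conversely a weak* continuous operator on $X'$ is the adjoint of a unique operator on $X$) gives (iv)$\Leftrightarrow$(v). For (v)$\Rightarrow$(i) I would take any net $(U_\alpha)$ in $\mathrm{co}\,\EuScript{S}'$ with $U_\alpha\to Q$ pointwise weak*, write $U_\alpha=V_\alpha'$ with $V_\alpha\in\mathrm{co}\,\EuScript{S}$, and check, using $T_s'Q=QT_s'=Q$, that $(V_\alpha)$ is a two-sided ergodic net for $\EuScript{S}$ in the weak topology with $V_\alpha x\to Px$ weakly, where $Q=P'$. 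For (i)$\Rightarrow$(ii),(iii),(vii) I would observe that a convergent two-sided ergodic net $(T_\alpha)$ has a pointwise limit $P$, which is $\tau$-continuous by \cref{jdlgscenario} and is a projection with $\rg P=\fix(\EuScript{S})$ (left ergodicity) and $\ker P=\overline{\mathrm{lin}}\,\rg(\mathrm{Id}-\EuScript{S})$ (right ergodicity plus the obvious reverse inclusions); this is (vii) and a topological direct sum. Then, for an arbitrary right ergodic net $(R_\gamma)$ (in the weak or the $\tau$ topology), equicontinuity of $\{R_\gamma\}$ together with the splitting $x=Px+(x-Px)$ forces $R_\gamma x\to Px$ in the respective topology, which is (ii), (iii), and the final assertion. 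Finally (vii)$\Rightarrow$(vi) is the identity $\fix(\EuScript{S}')=(\overline{\mathrm{lin}}\,\rg(\mathrm{Id}-\EuScript{S}))^{\perp}$ together with the direct sum.

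It remains to close the cycle, say via (ii)$\Rightarrow$(v) and (vi)$\Rightarrow$(v). For (ii)$\Rightarrow$(v): right amenability yields a right ergodic net $(T_\alpha)\subseteq\mathrm{co}\,\EuScript{S}$ (operator averages along an approximate right invariant mean); by (ii) it converges pointwise weakly, the limit $P$ is $\tau$-continuous, $P'$ lies in $\ker(\EuScript{K}(\mathrm{co}\,\EuScript{S}';X',X))$ by \cref{charer}, and an alternating-net argument exactly as in the proof of \cref{mainfirstchap} (feeding a net all of whose accumulation points lie in the kernel into (ii)) shows this kernel is a singleton, so $P'$ is the zero $Q$ and is weak* continuous by construction. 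For (vi)$\Rightarrow$(v): the left-amenability fixed point (a common fixed point of the left translations $\lambda_{T_s'}$) produces a right zero $Q_0$ with $\rg Q_0=\fix(\EuScript{S}')$ and $Q_0|_{\fix(\EuScript{S}')}=\mathrm{Id}$; since any two such right zeros agree in the pairing with $\fix(\EuScript{S})$ and take values in $\fix(\EuScript{S}')$, the separation hypothesis (vi) forces them to coincide, so the kernel is again a singleton $\{Q\}$; identifying $\rg Q=\fix(\EuScript{S}')$ and $\ker Q=\fix(\EuScript{S})^{\perp}$ (the latter using (vi) and the bipolar theorem) then upgrades the algebraic splitting of $X'$ to a weak*-topological one, giving weak* continuity of $Q$.

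The step I expect to be the main obstacle is precisely this last one: verifying that the zero $Q$ of $\EuScript{K}(\mathrm{co}\,\EuScript{S}';X',X)$ is weak* continuous --- equivalently, that the ergodic averages $T_\alpha x$ converge inside $X$ and not merely in the algebraic bidual, equivalently that the candidate splitting of $X$ in (vii) is a genuine topological direct sum. Whenever one starts from (i), (ii) or (iii) this is immediate from the Banach--Steinhaus results of \cref{jdlgscenario}, because there the projection is obtained as a pointwise limit of elements of $\overline{\mathrm{co}\,\EuScript{S}}\subseteq\mathscr{L}(X)$; the delicate work is to push it through from the purely algebraic hypothesis (vi), combining the kernel description of \cref{kerneldesc}, the separation of the fixed spaces, and the bipolar theorem to force weak* closedness of the relevant range and kernel.
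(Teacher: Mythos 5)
Your overall architecture is close to the paper's: dualize to the left amenable semigroup $\EuScript{S}'$ on $(X',\sigma(X',X))$, use \cref{kerneldesc} and \cref{charer} to identify the kernel of $\EuScript{K}(\mathrm{co}\,\EuScript{S}';X',X)$, pass back and forth via (pre-)adjoints, and use Banach--Steinhaus equicontinuity on the barrelled space $X$. Your treatment of (i) $\Rightarrow$ (ii), (iii), (vii), of (iv) $\Leftrightarrow$ (v), of (v) $\Rightarrow$ (i), of (vii) $\Rightarrow$ (vi), and your alternating-net argument for (ii) $\Rightarrow$ (v) are all sound and essentially match the paper's reasoning (the paper routes (i) $\Rightarrow$ (iv) $\Rightarrow$ (v) $\Rightarrow$ (ii) using \cref{mainfirstchap}, but the content is the same). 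One small omission: in your cycle, assertion (iii) has no outgoing edge, so as written it is only implied by the others and not shown to imply them; this is easily repaired by noting that a right ergodic net for $\tau$ is a fortiori right ergodic for $\sigma(X,X')$ and, if $\tau$-convergent, yields (i) by the same ``all limits coincide'' argument you use for (ii).

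The genuine gap is the implication (vi) $\Rightarrow$ (v). You correctly produce the unique right zero $Q$ with $\rg Q = \fix(\EuScript{S}')$ and $\ker Q = \fix(\EuScript{S})^{\perp}$, and both subspaces are indeed weak* closed with $X' = \rg Q \oplus \ker Q$ algebraically. But weak* closedness of the range and kernel of a linear projection does \emph{not} imply weak* continuity of that projection: $(X',\sigma(X',X))$ is not barrelled in general, there is no closed graph theorem available, and an algebraic direct sum of two closed subspaces of a locally convex space need not be topological. Weak* continuity of $Q$ is \emph{equivalent} to the assertion that the pre-adjoint averages $T_\alpha x$ converge weakly in $X$ (rather than merely in the algebraic dual of $X'$), which is precisely what has to be proved; the bipolar theorem cannot supply it. The paper avoids this by proving (vi) $\Rightarrow$ (iii) entirely on the $X$ side: by Hahn--Banach, (vi) forces $\fix(\EuScript{S}) \oplus \overline{\mathrm{lin}}\,\rg(\mathrm{Id}-\EuScript{S})$ to be dense in $X$; right ergodic nets for $\tau$ exist (\cref{prep}~(i)) and are equicontinuous by Banach--Steinhaus on the barrelled space $X$, so the set $D$ of points where they all converge is closed (\cref{prep}~(ii)) and contains the dense subspace above (\cref{prep}~(iv)), whence $D = X$. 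The continuity must be harvested where barrelledness lives, namely on $X$, not on $X'$. Replacing your (vi) $\Rightarrow$ (v) by this (vi) $\Rightarrow$ (iii) closes the cycle and completes the proof.
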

{We start with the following lemma.
\begin{lemma}\label{prep}
Consider a bounded right amenable semigroup $\EuScript{S} \subset \mathscr{L}(X)$ on a  \color{red} quasi-complete \color{black} barrelled locally convex space $(X,\tau)$. Then the following assertions hold.
\begin{enumerate}[(i)]
\item There exist right ergodic nets for $\EuScript{S}$ with respect to the topology $\tau$. 
\item Let $D$ be the set of all $x \in X$ for which $\lim_\alpha T_\alpha x$ exists for every net $(T_\alpha)_{\alpha \in A}$ which is right ergodic with respect to $\tau$. Then $D$ is closed.
\item Let $D_0$ be the set of all $x \in X$ for which $\lim_\alpha T_\alpha x=0$ for every net $(T_\alpha)_{\alpha \in A}$ which is right ergodic with respect to $\tau$. Then $D_0$ is closed.
\item $\fix(\EuScript{S}) \cap \overline{\mathrm{lin}} \rg(\mathrm{Id}- \EuScript{S}) = \{0\}$ and $\fix(\EuScript{S}) \oplus \overline{\mathrm{lin}} \rg(\mathrm{Id}- \EuScript{S}) \subset D$.
\end{enumerate}
\end{lemma}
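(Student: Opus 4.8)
The plan is to prove (i) first and to deduce (ii)--(iv) from it, the common tool being the following observation: since $X$ is barrelled and $\EuScript{S}$ is pointwise bounded, the Banach--Steinhaus theorem makes $\EuScript{S}$ equicontinuous, hence $\mathrm{co}\,\EuScript{S}$ is equicontinuous, and therefore so is its pointwise closure $\overline{\mathrm{co}\,\EuScript{S}}^{X^X}$. In particular $\overline{\mathrm{co}\,\EuScript{S}}^{X^X}\subseteq\mathscr{L}(X)$, and \emph{every} net which is right ergodic for $\EuScript{S}$ consists of operators drawn from this one equicontinuous set; this uniform continuity is what will power the comparison arguments in (ii)--(iv).

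For (i) I would argue as follows. By \cref{lemmaprop}, $\EuScript{S}$ is semitopological for the topology of pointwise convergence, so right amenability of $\EuScript{S}$ is meaningful, say with right invariant mean $m$ on $\mathrm{C}_{\mathrm{b}}(\EuScript{S})$. The finitely supported probability measures $\sum_j c_j\delta_{S_j}$ (with $S_j\in\EuScript{S}$) form a weak* dense subset of the weak* compact convex set of means, so a Hahn--Banach (Mazur) argument upgrades weak* approximation of $m$ to norm approximation, and diagonalising over finite subsets $F\subseteq\EuScript{S}$ and over $\varepsilon>0$ produces a net $(\mu_\alpha)_{\alpha\in A}$ of finitely supported probability measures on $\EuScript{S}$ with $\|\mu_\alpha-(\varrho_T)_{*}\mu_\alpha\|_1\to 0$ for each $T\in\EuScript{S}$, where $\varrho_T\colon S\mapsto ST$ denotes right translation. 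Setting $T_\alpha$ to be the barycenter $\sum_j c^\alpha_j S^\alpha_j\in\mathrm{co}\,\EuScript{S}$ of $\mu_\alpha$, the operator $T_\alpha T$ is the barycenter of $(\varrho_T)_{*}\mu_\alpha$, so $T_\alpha(\mathrm{Id}-T)x$ equals $\sum_j d^\alpha_j\,R^\alpha_j x$, where $\mu_\alpha-(\varrho_T)_{*}\mu_\alpha=\sum_j d^\alpha_j\delta_{R^\alpha_j}$ with $R^\alpha_j\in\EuScript{S}$ and $\sum_j|d^\alpha_j|=\|\mu_\alpha-(\varrho_T)_{*}\mu_\alpha\|_1$. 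Hence, for every continuous seminorm $\rho$,
\begin{align*}
\rho\bigl(T_\alpha(\mathrm{Id}-T)x\bigr) \le \|\mu_\alpha-(\varrho_T)_{*}\mu_\alpha\|_1 \cdot \sup_{S\in\EuScript{S}}\rho(Sx),
\end{align*}
and the right-hand side tends to $0$ because $\EuScript{S}x$ is bounded; thus $(T_\alpha)_{\alpha\in A}\subseteq\mathrm{co}\,\EuScript{S}\subseteq\overline{\mathrm{co}\,\EuScript{S}}^{X^X}$ is a right ergodic net with respect to $\tau$. I expect this to be the main obstacle: one must be careful that \emph{right} amenability delivers invariance under the \emph{right} translations $\varrho_T$, and that amenability can be turned into a concrete net handled via barycenters and the orbit-boundedness estimate above.

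For (iii), given $x\in\overline{D_0}$, an arbitrary right ergodic net $(T_\alpha)_{\alpha\in A}$, and a closed convex zero neighbourhood $U$, equicontinuity supplies a zero neighbourhood $V$ with $TV\subseteq U$ for all $T\in\overline{\mathrm{co}\,\EuScript{S}}^{X^X}$; choosing $x_0\in D_0$ with $x-x_0\in V$ and writing $T_\alpha x=T_\alpha x_0+T_\alpha(x-x_0)$, the first summand tends to $0$ and the second stays in $U$, so $T_\alpha x\to 0$ and $x\in D_0$. For (ii), the same splitting, now with $(T_\alpha x_0)$ only known to converge, shows that $(T_\alpha x)_\alpha$ is Cauchy and lies in the bounded set $\overline{\mathrm{co}\,\EuScript{S}x}$, hence converges, so $x\in D$. (It is convenient to note here that $D$ and $D_0$ are linear subspaces, each $T_\alpha$ being linear.)

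For (iv), the definition of a right ergodic net (\cref{defergodic}) gives $\rg(\mathrm{Id}-\EuScript{S})\subseteq D_0$ at once, hence $\mathrm{lin}\,\rg(\mathrm{Id}-\EuScript{S})\subseteq D_0$ and, by (iii), $\overline{\mathrm{lin}}\,\rg(\mathrm{Id}-\EuScript{S})\subseteq D_0\subseteq D$. Every operator in $\overline{\mathrm{co}\,\EuScript{S}}^{X^X}$ fixes $\fix(\EuScript{S})$ pointwise, so $\fix(\EuScript{S})\subseteq D$, and since $D$ is a subspace, $\fix(\EuScript{S})+\overline{\mathrm{lin}}\,\rg(\mathrm{Id}-\EuScript{S})\subseteq D$. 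Finally, for the directness of the sum, one takes $x\in\fix(\EuScript{S})\cap\overline{\mathrm{lin}}\,\rg(\mathrm{Id}-\EuScript{S})$ together with a right ergodic net, which exists by (i): since $x$ is fixed, $T_\alpha x=x$ for all $\alpha$, whereas $x\in D_0$ forces $T_\alpha x\to 0$; therefore $x=0$.
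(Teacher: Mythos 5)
Your handling of (ii)--(iv) matches the paper's: equicontinuity of every right ergodic net via the Banach--Steinhaus theorem on the barrelled space $X$, the standard density/splitting argument for $D$ and $D_0$, and the derivation of (iv) from $\rg(\mathrm{Id}-\EuScript{S})\subset D_0$. The genuine gap is in (i). You claim that right amenability of the semitopological semigroup $\EuScript{S}$ produces finitely supported probability measures $\mu_\alpha$ on $\EuScript{S}$ with $\|\mu_\alpha-(\varrho_T)_*\mu_\alpha\|_1\to 0$ for every $T\in\EuScript{S}$, by ``upgrading weak* approximation of $m$ to norm approximation'' via Mazur. But Mazur's theorem identifies the norm closure of a convex set in $\mathrm{C}_{\mathrm{b}}(\EuScript{S})'$ with its closure in the weak topology $\sigma(\mathrm{C}_{\mathrm{b}}(\EuScript{S})',\mathrm{C}_{\mathrm{b}}(\EuScript{S})'')$, whereas the convergence you actually have is only in the weak* topology $\sigma(\mathrm{C}_{\mathrm{b}}(\EuScript{S})',\mathrm{C}_{\mathrm{b}}(\EuScript{S}))$, which is strictly coarser on the finitely supported measures. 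The conclusion you want is in fact false in general: if such a net existed, any weak* cluster point of $(\mu_\alpha)$ in $\ell^\infty(\EuScript{S}_d)'$ (where $\EuScript{S}_d$ is $\EuScript{S}$ with the discrete topology) would be a right invariant mean for the discrete semigroup $\EuScript{S}_d$. Take $G=SO(3)$ acting on itself; the Koopman semigroup $\EuScript{T}_G\subset\mathscr{L}(\mathrm{C}(G))$ is bounded and right amenable for the strong operator topology (it is topologically isomorphic to the compact, hence amenable, group $G$ with reversed multiplication), yet $G$ contains a free group on two generators and is not discretely amenable. So no $\ell^1$-asymptotically right invariant net of finitely supported measures exists, and your construction of a right ergodic net for $\tau$ collapses exactly at the step you flagged as the main obstacle.

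The repair is to apply Mazur inside $X$ rather than inside $\mathrm{C}_{\mathrm{b}}(\EuScript{S})'$, which is what the paper does: it first produces a right ergodic net for $\EuScript{S}$ with respect to $\sigma(X,X')$ as the net of pre-adjoints of a left ergodic net for $\EuScript{S}'$ (whose existence comes from \cref{kerneldesc} and \cref{charer}), and then, for each finite $F\subset\EuScript{S}$, finite $Y\subset X$ and finite set of seminorms, uses that $0$ lies in the weak closure, hence by Mazur in the $\tau$-closure, of the convex set $\{(R(\mathrm{Id}-T)x)_{T\in F,\,x\in Y}\mid R\in\mathrm{co}\,\EuScript{S}\}\subset X^{F\times Y}$; a diagonal net over these data is right ergodic for $\tau$. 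This is the content of the proof of Theorem 1.4 of \cite{Schr2013a}, which the paper simply cites. A smaller point: in (ii) you deduce convergence of $(T_\alpha x)_{\alpha}$ from ``Cauchy and bounded'', which needs some completeness of $X$ that is not among the hypotheses; the paper instead invokes Theorem III.4.5 of \cite{Scha1999} here, and you should make the corresponding appeal explicit rather than argue via Cauchyness alone.
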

\begin{proof}
We first observe that there are right {ergodic nets} for $\EuScript{S}$ with respect to the $\sigma(X,X')$-topology. In fact, by \cref{charer} we find {a left ergodic net} $(T_\alpha')_{\alpha \in A}$ for $\EuScript{S}' \subset \mathscr{L}(X')$ with respect to the $\sigma(X',X)$-topology such that $T_{\alpha}' \in \mathrm{co}\,\EuScript{S}'$ for all $\alpha \in A$. The net $(T_{\alpha})_{\alpha \in A}$ of pre{-}adjoints is then {right} ergodic for $\EuScript{S}$ with respect to the {$\sigma(X,X')$-topology}.\\
{The proof of Theorem 1.4 in \cite{Schr2013a} (which still works in the case of locally convex spaces) now shows that there are actually even {right ergodic nets} for $\EuScript{S}$ with respect to the topology $\tau$.}\smallskip\\
Now if $(T_\alpha)_{\alpha \in A}$ is a right ergodic net for $\EuScript{S}$ with respect to the topology $\tau$, then the set $\{T_\alpha\mid \alpha \in A\} \subset \mathscr{L}(X)$ is equicontinuous since $(X,\tau)$ is barrelled (see Theorem III.4.2 of \cite{Scha1999}). Therefore $\lim_\alpha T_\alpha x$ exists for each $x \in \overline{D}$ by Theorem III.4.5 of \cite{Scha1999} and $D$ is closed. Similarly we see that $D_0$ is closed.\smallskip\\
Since $\overline{\mathrm{lin}} \rg(\mathrm{Id}- \EuScript{S}) \subset D_0$ by (ii), we have $\fix(\EuScript{S}) \cap \overline{\mathrm{lin}} \rg(\mathrm{Id}- \EuScript{S}) = \{0\}$. Moreover, we obtain $\fix(\EuScript{S}) \oplus \overline{\mathrm{lin}} \rg(\mathrm{Id}- \EuScript{S}) \subset D$. 
\end{proof}
}
\begin{proof}[of \cref{normweakly}]
We {first} prove that (ii) implies (i). So assume (ii) and take any right {ergodic net} $(T_{\alpha})_{\alpha \in A}$ for $\EuScript{S}$ {with respect to the $\sigma(X,X')$-topology}. {For each} $S \in \EuScript{S}$ the net $(ST_{\alpha})_{\alpha \in A}$ is also right ergodic {with respect to the $\sigma(X,X')$-topology}. Since all right {ergodic nets} converge, all of them {must} have the same limit (otherwise the ``mixed nets'' would not be convergent). Thus
\begin{align*}
\lim_{\alpha} (\mathrm{Id}-S)T_{\alpha}x = \lim_{\alpha} T_{\alpha}x-\lim_{\alpha} ST_{\alpha}x =0
\end{align*}
weakly for each $x \in X$ and hence $(T_{\alpha})_{\alpha \in A}$ is {also} left ergodic {with respect to the $\sigma(X,X')$-topology}.
A similar argument shows that (iii) implies (i). \smallskip\\
Let now $(T_{\alpha})_{\alpha \in A}$ be a {net} as in (i). We set $Px\coloneqq \lim_{\alpha} T_{\alpha}x$ for $x \in X$ where convergence is understood with respect to the weak topology $\sigma(X,X')$. Then $P \in \EuScript{K}(\mathrm{co}\,\EuScript{S};X,X')$ and $P$ is continuous with respect to the weak topology by \cref{jdlgscenario}. Moreover{,} we obtain
\begin{align*}
0 = \lim_{\alpha}T_{\alpha}(\mathrm{Id}-T)x= Px-PTx\\
0 = \lim_{\alpha}(\mathrm{Id}-T)T_{\alpha}x = Px - TPx
\end{align*}
for each $x \in X$ and $T \in \EuScript{S}$. This shows $PT =TP = P$ for each $T \in \EuScript{S}$ and consequently, since multiplication is separately continuous with respect to the weak operator topology, 
\begin{align*}
PR = P =RP
\end{align*}
for all $R \in \EuScript{K}(\mathrm{co}\,\EuScript{S};X,X')$ and thus (iv) holds.\smallskip\\
Suppose that (iv) is valid and let $P \in \EuScript{K}(\mathrm{co}\,\EuScript{S};X,X')$ be the zero element. We then obtain $P' \in \EuScript{K}(\mathrm{co}\,\EuScript{S}{'};X',X)$ and even $P' \in \ker \EuScript{K}(\mathrm{co}\,\EuScript{S}{'};X',X)$ by \cref{kerneldesc}. Now take any $Q \in \ker \EuScript{K}(\mathrm{co}\,\EuScript{S}';X',X)$ and a left {ergodic net} $(T_{\alpha}')_{\alpha \in A}\subset \mathrm{co}\,\EuScript{S}'$ for $\EuScript{S}'$ {with respect to the weak* topology such that $\lim_{\alpha} T_{\alpha}' = Q$.} Since each operator $T_{\alpha}'$ has a pre{-}adjoint in $\EuScript{K}(\mathrm{co}\,\EuScript{S};X,X')$ we obtain 
\begin{align*}
Q = P'Q = \lim_{\alpha} P'T_{\alpha}' = P'
\end{align*}
and hence $P' = Q$. Consequently, the kernel of $\ker \EuScript{K}(\mathrm{co}\,\EuScript{S}';X',X)$ consists only of $P'$ which shows that $P'$ is a weak* continuous zero.\smallskip\\
Now assume that (v) is satisfied. Let $Q= P'\in \EuScript{K}(\mathrm{co}\,\EuScript{S};X';X)$ be the weak* continuous zero and take $0 \neq x' \in \fix(\EuScript{S}')$. We find $x \in X$ with $\langle x,x'\rangle \neq 0$ and $y\coloneqq Px \in \fix(\EuScript{S})$ then satisfies $\langle y,x'\rangle = \langle x,Qx'\rangle = \langle x,x'\rangle \neq 0$. {Hence we have (vi).}\smallskip\\
{Suppose that (vi) holds. Take $x' \in X'$ vanishing on $\fix(\EuScript{S}) \oplus \overline{\mathrm{lin}} \rg(\mathrm{Id}- \EuScript{S})$. In particular $\langle x-Sx,x'\rangle = 0$ for all $x \in X$ and $S \in \EuScript{S}$ and hence $x' \in \fix(\EuScript{S}')$ and $x'=0$ since $\fix(\EuScript{S})$ separates $\fix(\EuScript{S}')$ and $x'$ vanishes on $\fix(\EuScript{S})$. Thus $\fix(\EuScript{S}) \oplus \overline{\mathrm{lin}} \rg(\mathrm{Id}- \EuScript{S})$ is dense in $X$ by the Hahn{--}Banach Theorem, and, by \cref{prep} (ii) $D=X$. Thus (vi) implies (iii).}\smallskip\\
\cref{mainfirstchap} shows that (v) implies (ii) and therefore the equivalence of assertions (i) -- (vi). The statement about the limit also follows from \cref{mainfirstchap}.\smallskip\\
The implication ``(vii) $\Rightarrow$ (iii)'' is clear. Conversely, if $(T_{\alpha})_{\alpha \in A}$ is a net as in (i), then $Px = \lim_\alpha T_\alpha x\in \fix(\EuScript{S})$ and 
\begin{align*}
x - Px = \lim_{\alpha} (\mathrm{Id}-T_{\alpha})x \in \overline{\mathrm{lin}} \rg(\mathrm{Id}- \EuScript{S}),
\end{align*}
which establishes (vii).
\end{proof}
\begin{corollary}\label{compactorbitsmean}
Every amenable operator semigroup $\EuScript{S} \subset \mathscr{L}(X)$ on a barrelled  \color{red} quasi-complete \color{black} locally convex space $X$ with relatively weakly compact convex orbits is strongly mean ergodic.
\end{corollary}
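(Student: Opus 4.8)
The plan is to verify assertion~(iv) of \cref{normweakly}, namely that the Köhler semigroup $\EuScript{K}\coloneqq\EuScript{K}(\mathrm{co}\,\EuScript{S};X,X')$ has a zero; strong mean ergodicity then follows from the implication ``(iv)$\Rightarrow$(iii)'' there. First I would dispatch the structural preliminaries. Since relatively weakly compact subsets of a locally convex space are bounded, the orbits $\EuScript{S}x$ are bounded and hence $\EuScript{S}$ is bounded, so \cref{normweakly} is applicable. Moreover $\mathrm{co}\,\EuScript{S}$ is an operator semigroup on $(X,\sigma(X,X'))$ with $(\mathrm{co}\,\EuScript{S})x=\mathrm{co}\,(\EuScript{S}x)$ relatively $\sigma(X,X')$-compact for every $x$, so $\EuScript{K}$ is compact by \cref{lemmaprop}(iii); as $\sigma(X,X')$ is the weak topology associated with the barrelled topology on $X$, \cref{jdlgscenario}(i)(c) shows that every operator in $\EuScript{K}$ is $\sigma(X,X')$-continuous, whence $\EuScript{K}$ is semitopological by \cref{lemmaprop}(iv); and as the closure of the convex set $\mathrm{co}\,\EuScript{S}$ in the locally convex space $(X,\sigma(X,X'))^X$ the semigroup $\EuScript{K}$ is compact and convex.

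The core of the argument is to exhibit both a left and a right zero of $\EuScript{K}$, which is where two-sided amenability enters: $\mathrm{C}_\mathrm{b}(\EuScript{S})$ carries a left and a right invariant mean. For $T\in\EuScript{S}$ the translations $\lambda_T\colon S\mapsto TS$ and $\rho_T\colon S\mapsto ST$ map $\EuScript{K}$ into itself and are affine and continuous (both because $\EuScript{K}$ is semitopological). Left amenability of $\EuScript{S}$ and Day's fixed point theorem (\cite{Day1961}, Theorem~3), applied to the homomorphism $T\mapsto\lambda_T$ exactly as in the proof of \cref{kerneldesc}, yield $Q\in\EuScript{K}$ with $TQ=Q$ for all $T\in\EuScript{S}$; since $S\mapsto SQ$ is affine and continuous this extends from $\EuScript{S}$ to $\mathrm{co}\,\EuScript{S}$ and then to $\EuScript{K}=\overline{\mathrm{co}\,\EuScript{S}}$, so $Q$ is a right zero. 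Applying the same reasoning to the left amenable opposite semigroup $\EuScript{S}^{\mathrm{op}}$ and the homomorphism $T\mapsto\rho_T$ of $\EuScript{S}^{\mathrm{op}}$ into the affine continuous self-maps of $\EuScript{K}$ produces $Q'\in\EuScript{K}$ with $Q'T=Q'$ for all $T\in\EuScript{S}$; as $S\mapsto Q'S$ is affine and continuous, this likewise extends to $Q'S=Q'$ for all $S\in\EuScript{K}$, so $Q'$ is a left zero. Then $Q=Q'Q=Q'$ is a zero of $\EuScript{K}$, and \cref{normweakly}(iv) is verified.

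The step I expect to be the main obstacle --- and the reason both halves of amenability are needed --- is the construction of a \emph{left} zero of $\EuScript{K}$: left amenability alone only delivers right zeros (they constitute precisely the kernel of $\EuScript{K}$ by \cref{kerneldesc}), and to promote the partial identity $Q'T=Q'$, $T\in\EuScript{S}$, to all of $\EuScript{K}$ one cannot merely invoke the right topological structure but genuinely needs $\EuScript{K}$ to be semitopological, which is exactly where the barrelledness of $X$ (through \cref{jdlgscenario}(i)(c)) is used. Once a left and a right zero are both available, the identification $Q=Q'$ and hence the conclusion are immediate.
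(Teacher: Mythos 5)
Your argument is correct, and it verifies assertion (iv) of \cref{normweakly} by a route that is close to, but not identical with, the paper's. The paper first notes that by compactness the map $S \mapsto S'$ is an isomorphism of $\EuScript{K}(\mathrm{co}\,\EuScript{S};X,X')$ (with reversed multiplication) onto $\EuScript{K}(\mathrm{co}\,\EuScript{S}';X',X)$ carrying kernel to kernel, and then applies \cref{kerneldesc} on \emph{both} sides of the duality---to the left amenable $\EuScript{S}$ on $X$ and to the left amenable $\EuScript{S}'$ on $X'$---to obtain $SP = PS = P$ for a kernel element $P$ and all $S \in \EuScript{S}$, before using the semitopological structure to extend this to all of $\EuScript{K}(\mathrm{co}\,\EuScript{S};X,X')$. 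You stay entirely on the primal side and instead rerun the Day fixed-point argument from the proof of \cref{kerneldesc} a second time, for the opposite semigroup acting by right translations; the net effect is the same pair of identities and the same final extension step. Your version avoids the adjoint semigroup altogether, at the cost of reproving rather than quoting the relevant half of \cref{kerneldesc}. One small over-attribution, harmless to the proof: the continuity of $\rho_T\colon S \mapsto ST$ needs only that $\EuScript{K}$ is right topological (\cref{lemmaprop} (i)), and that of $\lambda_T$ for $T \in \EuScript{S}$ only that $\EuScript{S} \subset \Lambda(\EuScript{K})$ (\cref{lemmaprop} (ii)); semitopologicality is genuinely required only where you invoke it at the end, namely to make $T \mapsto Q'T$ continuous so that $Q'T = Q'$ passes from $\mathrm{co}\,\EuScript{S}$ to its closure.
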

\begin{proof}
By compactness of $\EuScript{K}(\mathrm{co}\,\EuScript{S};X,X')$ the mapping
\begin{align*}
\Phi \colon \EuScript{K}(\mathrm{co}\,\EuScript{S};X,X')\longrightarrow \EuScript{K}(\mathrm{co}\,\EuScript{S}';X',X),\quad S \mapsto S'
\end{align*}
is an isomorphism of right topological semigroups if we reverse the order of multiplication in $\EuScript{K}(\mathrm{co}\,\EuScript{S};X,X')$. In particular, we obtain
\begin{align*}
\Phi(\mathrm{ker}(\EuScript{K}(\mathrm{co}\,\EuScript{S};X,X'))) = \mathrm{ker}(\EuScript{K}(\mathrm{co}\,\EuScript{S}';X',X)).
\end{align*}
Take $P \in \mathrm{ker}(\EuScript{K}(\mathrm{co}\,\EuScript{S};X,X'))$. Then $\EuScript{S}P = \{P\}$ and $\EuScript{S}'P' = \{P'\}$ by \cref{kerneldesc} and thus $SP=PS =P$ for each $S \in \EuScript{S}$. Since $\EuScript{K}(\mathrm{co}\,\EuScript{S};X,X')$ is semitopological, {we obtain $SP=PS =P$ for each $S \in \EuScript{K}(\mathrm{co}\,\EuScript{S};X,X')$, i.e., $P$ is a zero in $\EuScript{K}(\mathrm{co}\,\EuScript{S};X,X')$.}
\end{proof}
\begin{remark}
If $X$ is a reflexive barrelled space, then every bounded {amenable} semigroup is strongly mean ergodic by \cref{compactorbitsmean} ({see} Theorem IV.5.6 in \cite{Scha1999}).
\end{remark}
We present an example where \cref{normweakly} is applicable.
\begin{example}
Consider the space $\mathrm{C}(\R)$ of continuous functions on $\R$ equipped with the compact{-}open topology, i.e., the locally convex topology induced by the seminorms $\rho_K$ for $K \subset \R$ compact defined by
\begin{align*}
\rho_K(f) \coloneqq \sup_{x \in K} |f(x)|
\end{align*}
for all $f \in \mathrm{C}(\R)$. {Then $\mathrm{C}(\R)$ is a Fréchet space (and therefore barrelled)} and its dual space can be identified with the compactly supported Borel measures on $\R$ ({see} Corollary 7.{6.}5 in \cite{Jarc1981}).\\
Consider the multiplication operator $T \in \mathscr{L}(\mathrm{C}(\R))$ defined by 
\begin{align*}
(Tf)(x)\coloneqq |\cos(x)| \cdot f(x)
\end{align*}
for each $f \in \mathrm{C}(\R)$ and each $x \in \R$. Then $\EuScript{S}\coloneqq \{T^n\mid n \in \N_0\}$ is bounded. Moreover we have $\fix(\EuScript{S}) = \{0\}$ and $\lin \{\delta_{\pi k}\mid k \in \Z\} \subset \fix(\EuScript{S}')$. Thus $\EuScript{S}$ is not strongly mean ergodic by \cref{normweakly}. However, it is weak* mean ergodic. In fact, for each $f \in \mathrm{C}(\R)$ we obtain $\lim_{n \rightarrow \infty} T^nf = Pf$ pointwise with
\begin{align*}
(Pf)(x)\coloneqq \begin{cases}f(\pi k)& \text{if } x = \pi k \text{ with } k \in \Z, \\
0& \text{else}.
\end{cases}
\end{align*}
Lebesgue's Theorem implies
\begin{align*}
\lim_{n \rightarrow \infty} \langle f,(T')^n\mu \rangle = \int_{\R} Pf\,\mathrm{d}\mu = \langle f, \sum_{k \in \Z} \mu(\{\pi k\}) \delta_{\pi k}\rangle
\end{align*}
for each $f \in \mathrm{C}(\R)$ and each $\mu \in \mathrm{C}(\R)'$. Thus $\lim_{n \rightarrow \infty} (T')^n\mu = \sum_{k \in \Z} \mu(\pi k) \delta_{\pi k}$ in weak* topology which implies
\begin{align*}
\lim_{\alpha} S_{\alpha}\mu = \sum_{k \in \Z} \mu(\{\pi k\}) \delta_{\pi k}
\end{align*}
in weak* topology for each $\mu \in \mathrm{C}(\R)'$ and each left {ergodic net} $(S_{\alpha})_{\alpha \in A}$ for $\EuScript{S}'$.
\end{example}
\section{Mean Ergodicity in Topological Dynamics}
In this section we study different notions of mean ergodicity in topological dynamics (see \cite{Schr2014}). We note that for a topological dynamical system $(K;S)$ the mapping
\begin{align*}
S \longrightarrow \EuScript{T}_S, \quad s \mapsto T_s
\end{align*}
is an epimorphism of {semitopological semigroups, if we reverse the order of multiplication in $S$ and equip $\EuScript{T}_S$ with the strong operator topology ({see} Theorem 4.17 in \cite{EFHN2015}). In particular, if $S$ is left amenable, then $\EuScript{T}_S$ is right amenable.}\\
A topological dynamical system $(K;S)$ is said to be \emph{weak*} (resp. \emph{norm}) \emph{mean ergodic} if the Koopman semigroup $\EuScript{T}_S \subset \mathscr{L}(\mathrm{C}(K))$ is weak* (resp. strongly) mean ergodic. Moreover, the system $(K;S)$ is \emph{uniquely ergodic} if there is a unique $S${-}invariant probability measure $\mu \in \mathrm{C}(K)'$.
{Using the convex Köhler semigroup ({see} \cref{sgrtopdyn}) we obtain the following characterization of unique ergodicity.}
\begin{proposition}\label{uniquee}
Let $(K;S)$ be a topological dynamical system with $S$ left amenable. The following assertions are equivalent.
\begin{enumerate}[(i)]
\item There is {a net} $(T_{\alpha})_{\alpha \in A} \subset \mathrm{co}\,\EuScript{T}_S$ for $\EuScript{T}_{S}$ which is right ergodic with respect to the weak topology such that for each $f \in \mathrm{C}(K)$ the net $(T_\alpha f)_{\alpha \in A}$ converges weakly to a constant function.
\item The system $(K;S)$ is uniquely ergodic.
\item The semigroup $\EuScript{K}_{\mathrm{c}}(K;S)$ has a zero which is a rank one operator.
\end{enumerate}
If one of these assertions holds and $\mu \in \mathrm{C}(K)'$ is the unique invariant probability measure, then
\begin{align*}
\lim_{\alpha} T_{\alpha} f = \int_K f\,\mathrm{d}\mu\cdot \mathbbm{1}
\end{align*}
uniformly on $K$ for each $f \in \mathrm{C}(K)$ and for {each net} $(T_{\alpha})_{\alpha \in A}$ which is right ergodic for $\EuScript{T}_S$ with respect to the norm topology.
\end{proposition}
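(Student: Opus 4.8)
The plan is to run everything through the kernel of the convex Köhler semigroup $\EuScript{K}_{\mathrm{c}}(K;S)=\EuScript{K}(\mathrm{co}\,\EuScript{T}_S';\mathrm{C}(K)',\mathrm{C}(K))$ (see \cref{sgrtopdyn}). First I would fix the set-up: $\mathrm{C}(K)$ is a Banach space, hence barrelled, and $\EuScript{T}_S$ consists of Markov operators, so it is bounded; since $S$ is left amenable, $\EuScript{T}_S$ is right amenable and $\EuScript{T}_S'$ is left amenable. By \cref{lemtopdynsem}\,(i) the semigroup $\EuScript{T}_S'$ has relatively $\sigma(\mathrm{C}(K)',\mathrm{C}(K))$-compact convex orbits, so \cref{mainfirstchap} and \cref{kerneldesc} apply to $\EuScript{T}_S'$ with the weak* topology, $\EuScript{K}_{\mathrm{c}}(K;S)$ is a compact right topological semigroup, and by \cref{kerneldesc} its kernel is nonempty and consists exactly of those $Q$ with $T_s'Q=Q$ for all $s\in S$. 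I would also record the basic dictionary: a probability measure on $K$ is $S$-invariant precisely when it lies in $\fix(\EuScript{T}_S')$, and the set $P(K)$ of probability measures on $K$ is convex and weak* closed, hence weak* compact.

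Next I would prove the equivalence of (ii) and (iii) by analysing an element $Q$ of $\ker\EuScript{K}_{\mathrm{c}}(K;S)$. From $T_s'Q=Q$ the range of $Q$ lies in $\fix(\EuScript{T}_S')$; conversely any invariant measure $\nu$ satisfies $R\nu=\nu$ for every $R\in\mathrm{co}\,\EuScript{T}_S'$ and hence $Q\nu=\nu$, since $Q$ is a pointwise limit of such operators. Thus the range of $Q$ is exactly $\fix(\EuScript{T}_S')$. Writing $Q=\lim_\alpha R_\alpha$ with $R_\alpha\in\mathrm{co}\,\EuScript{T}_S'$, each $R_\alpha$ maps $P(K)$ into $P(K)$, and since $P(K)$ is weak* closed so does $Q$; hence $Q\rho$ is an $S$-invariant probability measure for every $\rho\in P(K)$. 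If $(K;S)$ is uniquely ergodic with invariant measure $\mu$, this forces $Q\rho=\mu$ for all $\rho\in P(K)$, whence $Q\nu=\langle\mathbbm{1},\nu\rangle\,\mu$ for all $\nu\in\mathrm{C}(K)'$ by linearity, so $Q$ is a rank one operator with range $\C\mu$; it then remains to check that $Q$ is a two-sided zero, for which I would use that every $R\in\EuScript{K}_{\mathrm{c}}(K;S)$ fixes $\mu$ and preserves total mass (both identities hold on $\mathrm{co}\,\EuScript{T}_S'$ and pass to pointwise limits), giving $QR=RQ=Q$; this yields (iii). Conversely, if $\EuScript{K}_{\mathrm{c}}(K;S)$ has a rank one zero $Q$, then $\fix(\EuScript{T}_S')$ equals the one-dimensional range of $Q$, and it contains the invariant probability measure $Q\rho$, which is therefore the only one, so $(K;S)$ is uniquely ergodic.

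Then I would treat (i) and deduce the limit assertion. For (i)$\Rightarrow$(ii): given a net $(T_\alpha)$ as in (i) and any invariant probability $\nu$, the relation $T_\alpha'\nu=\nu$ (valid since $T_\alpha\in\mathrm{co}\,\EuScript{T}_S$ and $\nu$ is invariant) gives $\langle f,\nu\rangle=\langle T_\alpha f,\nu\rangle\to c_f$, where $c_f$ is the constant value of $\lim_\alpha T_\alpha f$, so $\nu$ is determined by $\langle f,\nu\rangle=c_f$. For (ii)$\Rightarrow$(i): by (the proof of) \cref{prep}\,(i) applied to the left amenable $\EuScript{T}_S'$ there is a left ergodic net $(R_\alpha)\subset\mathrm{co}\,\EuScript{T}_S'$ for $\EuScript{T}_S'$ with respect to weak*, and by \cref{mainfirstchap} (using (iii)) $R_\alpha\to Q$ pointwise; taking pre-adjoints $T_\alpha\in\mathrm{co}\,\EuScript{T}_S$, the identity $\langle T_\alpha(\mathrm{Id}-T_s)f,\nu\rangle=\langle f,(\mathrm{Id}-T_s')R_\alpha\nu\rangle$ shows that $(T_\alpha)$ is right ergodic for $\EuScript{T}_S$ with respect to the weak topology, and $\langle T_\alpha f,\nu\rangle=\langle f,R_\alpha\nu\rangle\to\langle\mathbbm{1},\nu\rangle\langle f,\mu\rangle$ gives $T_\alpha f\to(\int_K f\,\mathrm{d}\mu)\,\mathbbm{1}$ weakly, a constant function. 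Finally, the rank one zero $Q$, $Q\nu=\langle\mathbbm{1},\nu\rangle\mu$, is automatically weak* continuous, so condition (v) of \cref{normweakly} holds for the bounded right amenable semigroup $\EuScript{T}_S$ on the barrelled space $\mathrm{C}(K)$; hence $\EuScript{T}_S$ is strongly mean ergodic and, by \cref{normweakly}, every net which is right ergodic for $\EuScript{T}_S$ with respect to the norm topology converges pointwise to $P$, where $P$ is the pre-adjoint of $Q$, that is $Pf=(\int_K f\,\mathrm{d}\mu)\,\mathbbm{1}$; since norm convergence in $\mathrm{C}(K)$ is uniform convergence on $K$, this is the stated limit.

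The step I expect to be the main obstacle is the direction (ii)$\Rightarrow$(iii): one must show that the weak* accumulation points of the orbits $(R_\alpha\nu)$ of a left ergodic net are invariant probability measures (combining weak* continuity of each $T_s'$ with weak* compactness and closedness of $P(K)$) and then leverage unique ergodicity to pin $Q$ down as the single rank one operator $\nu\mapsto\langle\mathbbm{1},\nu\rangle\mu$. A related delicacy is that $\EuScript{K}_{\mathrm{c}}(K;S)$ is only right topological — its elements need not be weak* continuous — so the identities showing that $Q$ is a two-sided zero (preservation of total mass, fixation of $\mu$) have to be obtained by passing to pointwise limits rather than by a continuity argument; the operator $Q$ itself, however, being rank one with coefficient functional $\langle\mathbbm{1},\cdot\rangle$, is weak* continuous, which is exactly what makes \cref{normweakly} applicable for the concluding statement.
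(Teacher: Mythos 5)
Your proof is correct and follows essentially the same route as the paper: both identify the zero of $\EuScript{K}_{\mathrm{c}}(K;S)$ via \cref{kerneldesc} as the rank-one, weak*-continuous operator $\nu\mapsto\langle\mathbbm{1},\nu\rangle\,\mu$ (i.e., the adjoint of $f\mapsto\langle f,\mu\rangle\mathbbm{1}$) and then invoke \cref{normweakly} to obtain the uniform convergence statement. The only difference is organizational: you prove (i)$\Rightarrow$(ii) directly from $T_\alpha'\nu=\nu$ for invariant $\nu$, whereas the paper routes this implication through \cref{normweakly}; this is a cosmetic, if slightly more elementary, variation.
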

\begin{proof}
Assume that $(T_\alpha)_{\alpha \in A}$ is {a net} as in (i). For each $f \in \mathrm{C}(K)$ {let} $c(f) \in \C$ with $\lim_{n \rightarrow \infty} T_\alpha f = c(f) \cdot \mathbbm{1}$. Then
\begin{align*}
\lim_{\alpha} (\mathrm{Id}-T_s) T_\alpha f = c(f) \cdot \mathbbm{1} - c(f)\cdot T_s\mathbbm{1} = 0
\end{align*}
weakly for each $f \in \mathrm{C}(K)$ and therefore $(T_\alpha)_{\alpha \in A}$ is also left ergodic. Thus assertion (i) of \cref{normweakly} holds. By (v) of \cref{normweakly} we obtain that $\fix(\EuScript{T}_{S})$ separates $\fix(\EuScript{T}_{S}')$. But for each $f \in \fix(\EuScript{T}_{S})$ we have 
\begin{align*}
f = \lim_{n \rightarrow \infty} T_\alpha f = c(f) \cdot \mathbbm{1},
\end{align*}
hence $\fix(\EuScript{T}_{S})$ is one dimensional and so must $\fix(\EuScript{T}_{S}')$ {proving (ii)}.\smallskip\\
Suppose that (ii) is valid and let $Q_1,Q_2 \in \ker(\EuScript{K}_{\mathrm{c}}(K;S))$. For each probability meausure $\mu \in \mathrm{C}(K)'$ the measures $Q_1\mu, Q_2\mu \in \mathrm{C}(K)'$ are invariant probability measures and thus $Q_1\mu = Q_2\mu$. This implies $Q_1 = Q_2$ and therefore $\EuScript{K}_{\mathrm{c}}(K;S)$ has a zero $Q$. If $\mu_1,\mu_2 \in \mathrm{C}(K)'$ are two probability measures, we also obtain $Q\mu_1 = Q\mu_2$. As a result $Q$ has rank one.\smallskip\\
Finally assume (iii). Let $Q \in \EuScript{K}_{\mathrm{c}}(K;S)$ be the zero which is a rank one operator. Take $x \in K$ and set $\mu \coloneqq Q\delta_x$. Since $Q$ {is} rank one, we obtain $Q\nu = Q\delta_x = \mu$ for each probability measure $\nu \in \mathrm{C}(K)'$. Now consider the operator $P \in \mathscr{L}(\mathrm{C}(K))$ given by
\begin{align*}
Pf \coloneqq \langle f,\mu\rangle \cdot \mathbbm{1}
\end{align*}
for $f \in \mathrm{C}(K)$. We then obtain
\begin{align*}
\langle Pf,\nu\rangle = \langle f,\mu\rangle \cdot  \langle \mathbbm{1},\nu\rangle = \langle f,\mu\rangle = \langle f, Q\nu \rangle
\end{align*}
for each $f \in \mathrm{C}(K)$ and each probability measure $\nu \in \mathrm{C}(K)'$. Hence $P' = Q$ and $Q$ is weak* continuous. Thus (i) and the remaining assertion follow from \cref{normweakly}.
\end{proof}
{
\begin{remark}
The equivalence of (i) and (ii) is also a direct consequence of Theorem 1.7 of \cite{Schr2013a}. The new part of \cref{uniquee} is the characterization of unique ergodicity via properties of the zero $Q \in \EuScript{K}_{\mathrm{c}}(K;S)$. In fact, we have proved the following for topological dynamical systems $(K;S)$ with $S$ left amenable.
\begin{enumerate}[(i)]
\item $(K;S)$ is weak* mean ergodic if and only if $\EuScript{K}_{\mathrm{c}}(K;S)$ has a zero (see \cref{mainfirstchap}).
\item $(K;S)$ is norm mean ergodic if and only if $\EuScript{K}_{\mathrm{c}}(K;S)$ has a weak* continuous zero (see \cref{normweakly}).
\item $(K;S)$ is uniquely ergodic if and only if $\EuScript{K}_{\mathrm{c}}(K;S)$ has a zero which is a rank one operator (see \cref{uniquee}).
\end{enumerate}
\end{remark}
}
{Recall that a topological dynamical system $(K;S)$ is \emph{minimal} if $K$ has no non-trivial closed $S$-invariant subsets.} The following consequence of \cref{uniquee} is a variation of \cite{KaWe1981}, Proposition 3.2{, for two-sided ergodic sequences} (see also the remark below Corollary 3.3 in \cite{Roma2011} and the paper by Iwanik \cite{Iwan1980}).
\begin{corollary}\label{minimalunique}
Consider a minimal topological dynamical system $(K;S)$ with $S$ left amenable. If there is an operator sequence $(T_n)_{n \in \N} \subset \mathrm{co}\,\EuScript{T}_S$ which is two{-}sided ergodic for $\EuScript{T}_S$ with respect to the $\sigma(\mathrm{C}(K),\ell^1(K))${-}topology such that $(T_nf)_{n \in \N}$ converges pointwise for each $f \in \mathrm{C}(K)$, then $(K;S)$ is uniquely ergodic.
\end{corollary}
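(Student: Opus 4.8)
The plan is to verify condition~(i) of \cref{uniquee} for the given sequence. For each $f \in \mathrm{C}(K)$ set $g_f \coloneqq \lim_n T_n f \in \C^K$, which exists by hypothesis. Since the operators $T_n \in \mathrm{co}\,\EuScript{T}_S$ are Markov operators on $\mathrm{C}(K)$, we have $\|T_n f\|_\infty \le \|f\|_\infty$, and for each fixed $x \in K$ the map $f \mapsto g_f(x)$ is a positive unital linear functional of norm one on $\mathrm{C}(K)$. Hence it is represented by a probability measure $\mu_x \in \mathrm{C}(K)'$; equivalently $T_n'\delta_x \to \mu_x$ in the $\sigma(\mathrm{C}(K)',\mathrm{C}(K))$-topology and $g_f(x) = \langle f,\mu_x\rangle$ for all $f$ and $x$.

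Next I would exploit the two-sided ergodicity of $(T_n)_n$, tested against the Dirac measures $\delta_x \in \ell^1(K)$. From left ergodicity, $(T_nf)(x) - (T_nf)(sx) \to 0$ for all $f \in \mathrm{C}(K)$, $x \in K$ and $s \in S$, so that $g_f(sx) = g_f(x)$; in particular $\mu_{sx} = \mu_x$ and each $g_f$ is $S$-invariant. From right ergodicity, $\langle f,\mu_x - T_s'\mu_x\rangle = 0$ for all $f$, so each $\mu_x$ is an $S$-invariant probability measure; in particular $(K;S)$ admits an invariant probability measure.

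The decisive step is to show that every $g_f$ is constant, and this is where minimality together with the hypothesis that $(T_n)_n$ is a \emph{sequence} (rather than a net) enters. Being a pointwise limit of a sequence of continuous functions, $g_f$ is of Baire class one, so by the classical Baire category argument the set of its continuity points is dense in $K$. Fix such a continuity point $p$ and an arbitrary $x \in K$. By minimality the orbit $Sx$ is dense in $K$, so there is a net $(s_\alpha x)_\alpha$ in $Sx$ with $s_\alpha x \to p$; since $g_f$ is $S$-invariant we have $g_f(s_\alpha x) = g_f(x)$ for all $\alpha$, and continuity of $g_f$ at $p$ forces $g_f(x) = g_f(p)$. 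As $x \in K$ was arbitrary, $g_f$ is the constant $g_f(p)$. I expect this Baire-class-one/continuity-point argument to be the only genuine obstacle; everything else is bookkeeping with adjoints.

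Finally, since each $g_f$ equals a constant $c_f\mathbbm{1}$ with $c_f = \langle f,\mu_x\rangle$ independent of $x$, bounded convergence upgrades $T_n f \to c_f\mathbbm{1}$ (pointwise) to convergence in the weak topology $\sigma(\mathrm{C}(K),\mathrm{C}(K)')$, and the computation $g_{(\mathrm{Id}-T_s)f}(x) = \langle (\mathrm{Id}-T_s)f,\mu_x\rangle = 0$ (as $\mu_x$ is $S$-invariant) shows that $(T_n)_n$ is right ergodic for $\EuScript{T}_S$ with respect to the weak topology and sends each $f$ to a constant function. Hence \cref{uniquee}~(i) is satisfied, so $(K;S)$ is uniquely ergodic, the unique invariant measure being $\mu_x$ for any (hence every) $x$, and the stated uniform limit formula for norm-ergodic nets follows from \cref{uniquee}.
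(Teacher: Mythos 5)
Your proposal is correct and follows essentially the same route as the paper: define the pointwise limit $Pf=\lim_n T_nf$, observe that it is a Baire class $1$, $S$-invariant function, conclude by a Baire category argument combined with minimality that it is constant, and then invoke Lebesgue's theorem together with \cref{uniquee}\,(i). The only (cosmetic) difference lies in the Baire category step: you use the dense $\mathrm{G}_{\delta}$ set of continuity points of a Baire-$1$ function and approximate it by a dense orbit, whereas the paper intersects the two dense, $S$-invariant $\mathrm{G}_{\delta}$ sets $(Pf)^{-1}(Pf(x_i))$, $i=1,2$; both arguments are valid on the Baire space $K$.
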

\begin{proof}
Take a sequence $(T_n)_{n \in \N}$ as above and set $Pf(x)\coloneqq \lim_{n \rightarrow \infty} T_nf(x)$ for $x \in K$ and $f \in \mathrm{C}(K)$. Then $P$ maps $\mathrm{C}(K)$ to the space of Baire {$1$} functions $\mathrm{B}_1(K)$.\\
For $f\in \mathrm{C}(K)$ and $x_1,x_2 \in K$ the pre-images $M_i\coloneqq (Pf)^{-1}(Pf(x_i))$ are non{-}empty, $S${-}invariant {$\mathrm{G}_{\delta}$ sets} for $i=1,2$ and{---}by minimality of $(K;S)${---}dense. In Baire spaces the intersection of two dense {$\mathrm{G}_{\delta}$ sets} is dense and in particular non{-}empty. We conclude $Pf(x_1) = Pf(x_2)$ and therefore $Pf$ is constant. By Lebesgue's Theorem and \cref{uniquee} $(K;S)$ is uniquely ergodic.
\end{proof}
\begin{corollary}\label{uniquetame}
Every minimal tame metric topological dynamical system $(K;S)$ with $S$ amenable is uniquely ergodic.
\end{corollary}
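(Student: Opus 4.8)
The plan is to derive the statement from \cref{minimalunique}. Since $(K;S)$ is minimal and $S$ is in particular left amenable, it suffices to produce a \emph{sequence} $(T_n)_{n\in\N}\subset\mathrm{co}\,\EuScript{T}_S$ that is two-sided ergodic for $\EuScript{T}_S$ with respect to the $\sigma(\mathrm{C}(K),\ell^1(K))$-topology and along which $(T_nf)_{n}$ converges pointwise for every $f\in\mathrm{C}(K)$. I would obtain such a sequence from an invariant-mean ``barycentre'' operator $Q$ lying in the convex Köhler semigroup $\EuScript{K}_{\mathrm{c}}(K;S)$, using tameness to convert the net defining $Q$ into an honest sequence.

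First I would build $Q$. Fix a two-sided invariant mean $m$ on $\mathrm{C}_{\mathrm{b}}(S)$, which exists because $S$ is amenable, and define $Q\colon\mathrm{C}(K)'\longrightarrow\mathrm{C}(K)'$ by
\begin{align*}
\langle Q\nu,f\rangle \coloneqq m\big(s\mapsto\langle\nu,T_sf\rangle\big) \qquad (\nu\in\mathrm{C}(K)',\ f\in\mathrm{C}(K)).
\end{align*}
Since $s\mapsto\langle\nu,T_sf\rangle$ is bounded and continuous this is well defined, $Q$ is linear with $\|Q\|\le1$, and, approximating $m$ weak* by finitely supported means, $Q$ is a pointwise weak* limit of finite convex combinations of the $T_s'$, so $Q\in\overline{\mathrm{co}\,\EuScript{T}_S'}=\EuScript{K}_{\mathrm{c}}(K;S)$. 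Left invariance of $m$ gives $T_t'Q=Q$ for every $t\in S$ (using $T_s\circ T_t=T_{ts}$), and, writing $\mu_x\coloneqq Q\delta_x$, left invariance also makes each $\mu_x$ an $S$-invariant probability measure, while right invariance of $m$ gives $\mu_{tx}=\mu_x$ for all $t\in S$ and $x\in K$.

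Next I would pass to a sequence and conclude. Since $(K;S)$ is tame, $\EuScript{K}_{\mathrm{c}}(K;S)$ is Rosenthal compact by \cref{chartame}, hence a Fréchet--Urysohn space, so $Q$ is the limit of a sequence $(T_n')_{n}\subset\mathrm{co}\,\EuScript{T}_S'$; let $T_n\in\mathrm{co}\,\EuScript{T}_S$ be the corresponding preadjoints. Testing against Dirac measures, $(T_nf)(x)=\langle T_n'\delta_x,f\rangle\to\langle\mu_x,f\rangle$, so $(T_nf)_n$ converges pointwise for each $f$, and for all $t\in S$, $f\in\mathrm{C}(K)$, $x\in K$,
\begin{align*}
\big((\mathrm{Id}-T_t)T_nf\big)(x) &= \langle T_n'\delta_x-T_n'\delta_{tx},f\rangle \longrightarrow \langle\mu_x-\mu_{tx},f\rangle = 0,\\
\big(T_n(\mathrm{Id}-T_t)f\big)(x) &= \langle T_n'\delta_x,(\mathrm{Id}-T_t)f\rangle \longrightarrow \langle Q\delta_x,(\mathrm{Id}-T_t)f\rangle = 0,
\end{align*}
the first using $\mu_{tx}=\mu_x$ and the second using that $\mu_x=Q\delta_x$ is $S$-invariant. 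As these sequences are bounded by $2\|f\|$, the convergences also hold in $\sigma(\mathrm{C}(K),\ell^1(K))$; hence $(T_n)_n$ is a two-sided ergodic sequence for $\EuScript{T}_S$ with respect to $\sigma(\mathrm{C}(K),\ell^1(K))$ along which $(T_nf)_n$ converges pointwise, and \cref{minimalunique} yields unique ergodicity.

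The step I expect to be delicate is producing $Q$ with the two identities above. One is tempted instead to look for a rank-one \emph{zero} of $\EuScript{K}_{\mathrm{c}}(K;S)$ and quote \cref{uniquee}, but $\EuScript{K}_{\mathrm{c}}(K;S)$ is only right topological and its elements need not be weak*-continuous, so $Q$ is not a genuine two-sided zero and $x\mapsto Q\delta_x$ is not automatically continuous, which blocks a direct minimality argument for its constancy. The resolution is that \cref{minimalunique} requires only the interaction of the \emph{sequence} $(T_n')$ with the generators $T_t'$ on one side together with $S$-invariance of the pointwise limit, and the invariant mean supplies exactly this — left invariance of $m$ delivering $T_t'Q=Q$ (the right ergodicity) and right invariance delivering $\mu_{tx}=\mu_x$ (the left ergodicity), which is why two-sided amenability rather than merely left amenability is needed. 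The other essential use of the hypotheses is tameness: it is precisely the Fréchet--Urysohn property of $\EuScript{K}_{\mathrm{c}}(K;S)$ that lets the net of finite averages defining $Q$ be replaced by a sequence, as \cref{minimalunique} demands.
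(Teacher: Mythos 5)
Your proposal is correct and follows essentially the same route as the paper: both use the Fréchet--Urysohn property of the Rosenthal compact semigroup $\EuScript{K}_{\mathrm{c}}(K;S)$ (via \cref{chartame}) to extract a sequence in $\mathrm{co}\,\EuScript{T}_S'$ converging to an element $Q$ satisfying $QT_s'=T_s'Q=Q$, verify that the preadjoint sequence is two-sided ergodic with pointwise convergent orbits, and then invoke \cref{minimalunique}. The only difference is how $Q$ is produced --- you construct it directly from a two-sided invariant mean, whereas the paper obtains it as a weak* cluster point of a two-sided ergodic net whose existence is quoted from Proposition 1.3 of \cite{Schr2013a} --- which amounts to a self-contained unpacking of the same idea.
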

\begin{proof}
By Proposition 1.3 of \cite{Schr2013a} there exists a two{-}sided {ergodic net} $(T_{\alpha})_{\alpha \in A}$ with respect to the weak topology $\sigma(\mathrm{C}(K),\mathrm{C}(K)')$. By passing to {a} subnet we may assume that the limit $Q\mu := \lim_{\alpha} T_{\alpha}'\mu$ exists in weak* topology for each $\mu \in \mathrm{C}(K)'$. Then $Q \in \mathrm{ker}\,\EuScript{K}_{\mathrm{c}}(K;S)${. Since $(K;S)$ is tame, $\EuScript{K}_{\mathrm{c}}(K;S)$ is a Rosenthal compact space and we }find a left {ergodic sequence} $(S_n')_{n \in \N} \subset \mathrm{co}\, \EuScript{T}_S'$ converging to $Q$. Since $QT_s'=T_s'Q = Q$ for each $s \in S$, the sequence $(S_n')_{n \in \N}$ is also two{-}sided ergodic. In particular $(S_n)_{n \in \N}$ is a two{-}sided {ergodic sequence} for $\EuScript{T}_S$ with respect to the $\sigma(\mathrm{C}(K),\ell^1(K))${-}topology such that $(S_nf)_{n \in \N}$ converges pointwise for each $f \in \mathrm{C}(K)$ and hence $(K;S)$ is uniquely ergodic by \cref{minimalunique}.
\end{proof}
\begin{remark}
\cref{uniquetame} has been proved for abelian group actions by E. Glasner ({see} Theorem 5.1 in \cite{Glas2007b}), D. Kerr and H. Li ({see} Theorem 7.19 \cite{KeLi2007}) as well as W. Huang  ({see} Theorem 4.8 in \cite{Huan2006}). Their proofs are based on a representation type result for minimal tame systems while {our proof uses} the topological {properties} of the semigroup $\EuScript{K}_{\mathrm{c}}(K;S)$.
\end{remark}
{The following theorem is the main result of this section.} {We recall that a topological dynamical system $(K;S)$ is \emph{topologically transitive} if there is $x \in K$ such that $\overline{Sx} =K$.}
\begin{theorem}\label{mainrelations}
Consider a topological dynamical system $(K;S)$ with $S$ left amenable and the following assertions.
\begin{enumerate}[(i)]
\item $(K;S)$ is weak* mean ergodic.
\item $(K;S)$ is norm mean ergodic.
\item $(K;S)$ is uniquely ergodic.
\end{enumerate}
Then (iii) $\Rightarrow$ (ii) $\Rightarrow$ (i). If $(K;S)$ is topologically transitive, {all these assertions are equivalent.}
\end{theorem}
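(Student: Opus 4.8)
The plan is to obtain the implications ``(iii) $\Rightarrow$ (ii) $\Rightarrow$ (i)'' directly from the three characterizations of weak* mean ergodicity, norm mean ergodicity and unique ergodicity in terms of the convex Köhler semigroup $\EuScript{K}_{\mathrm{c}}(K;S)$ collected in the remark following \cref{uniquee}, and then, in the topologically transitive case, to close the cycle by proving ``(i) $\Rightarrow$ (iii)'', using that $\EuScript{K}_{\mathrm{c}}(K;S)$ is a \emph{compact} right topological semigroup.

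For the first chain recall that, since $S$ is left amenable, $(K;S)$ is weak* mean ergodic (resp. norm mean ergodic, resp. uniquely ergodic) if and only if $\EuScript{K}_{\mathrm{c}}(K;S)$ has a zero (resp. a weak* continuous zero, resp. a zero which is a rank one operator). If $(K;S)$ is uniquely ergodic, let $Q$ be the rank one zero of $\EuScript{K}_{\mathrm{c}}(K;S)$ and $\mu$ the unique invariant probability measure; as in the proof of ``(iii) $\Rightarrow$ (i)'' of \cref{uniquee} we have $Q = P'$ for the weak* continuous operator $Pf \coloneqq \langle f,\mu\rangle\mathbbm{1}$, so $Q$ is weak* continuous and $(K;S)$ is norm mean ergodic. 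Since a weak* continuous zero is in particular a zero, norm mean ergodicity implies weak* mean ergodicity. This settles ``(iii) $\Rightarrow$ (ii) $\Rightarrow$ (i)'' with no transitivity assumption.

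Now suppose $(K;S)$ is topologically transitive and weak* mean ergodic, and fix $x_0 \in K$ with $\overline{Sx_0} = K$. By the above $\EuScript{K}_{\mathrm{c}}(K;S)$ has a zero $Q$; set $\mu \coloneqq Q\delta_{x_0}$. The crucial point is that the orbit $\{R\delta_{x_0}\mid R\in\EuScript{K}_{\mathrm{c}}(K;S)\}$ is the whole set of Borel probability measures on $K$: the evaluation map $\EuScript{K}_{\mathrm{c}}(K;S)\to\mathrm{C}(K)'$, $R\mapsto R\delta_{x_0}$, is continuous and $\EuScript{K}_{\mathrm{c}}(K;S)$ is compact, so its image is the weak* closed convex hull of $\{T_s'\delta_{x_0}\mid s\in S\} = \{\delta_{sx_0}\mid s\in S\}$; since $y\mapsto\delta_y$ embeds $K$ weak*-homeomorphically into $\mathrm{C}(K)'$ and $Sx_0$ is dense in $K$, this coincides with the weak* closed convex hull of $\{\delta_y\mid y\in K\}$, which is the set of all probability measures. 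Hence for any probability measure $\rho$ we may write $\rho = R\delta_{x_0}$ with $R\in\EuScript{K}_{\mathrm{c}}(K;S)$, and then, since $Q$ is a zero,
\[
Q\rho = (QR)\delta_{x_0} = Q\delta_{x_0} = \mu .
\]
Thus $Q$ is constant on the probability measures, whence $Q\xi = \langle\mathbbm{1},\xi\rangle\,\mu$ for all $\xi\in\mathrm{C}(K)'$ by linearity; so $Q$ is a rank one zero and $(K;S)$ is uniquely ergodic by \cref{uniquee}. (Equivalently, $\mu$ is invariant since $T_s'Q = Q$, and any invariant probability measure $\nu$ satisfies $Q\nu = \mu$ and also $Q\nu = \nu$, the latter because $\nu$ is fixed by each element of $\mathrm{co}\,\EuScript{T}_S'$ and $Q$ is a pointwise limit of such operators; hence $\nu = \mu$.) Together with the first chain, this yields the equivalence of (i), (ii) and (iii) for topologically transitive systems.

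The step I expect to require the most care is the identification of $\{R\delta_{x_0}\mid R\in\EuScript{K}_{\mathrm{c}}(K;S)\}$ with the set of all probability measures: this is where transitivity is used, and it depends essentially on the compactness of $\EuScript{K}_{\mathrm{c}}(K;S)$ (from \cref{lemtopdynsem} (i) and \cref{lemmaprop} (iii)), which lets us realize an arbitrary probability measure as $R\delta_{x_0}$ for a genuine element $R$ of the Köhler semigroup and not merely as a limit --- exactly what makes the zero relation $QR = Q$ applicable and circumvents the fact that $Q$ need not be weak* continuous. The remaining bookkeeping (the transfer of amenability from $S$ to $\EuScript{T}_S$ and to $\EuScript{T}_S'$ and checking that \cref{mainfirstchap}, \cref{normweakly} and \cref{uniquee} apply) is routine.
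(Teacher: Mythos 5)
Your proposal is correct and follows essentially the same route as the paper: the chain (iii) $\Rightarrow$ (ii) $\Rightarrow$ (i) via the zero of $\EuScript{K}_{\mathrm{c}}(K;S)$, and, in the transitive case, the identification $\EuScript{K}_{\mathrm{c}}(K;S)(x_0) = \mathrm{P}(K)$ (the paper's \cref{orbitlem}) combined with the zero relation $QR=Q$ (the paper's \cref{weakstarunique}). Your parenthetical alternative ($Q\nu=\nu$ for invariant $\nu$) is in fact verbatim the paper's argument, so the only cosmetic difference is that you phrase the conclusion as ``$Q$ has rank one'' rather than directly comparing invariant measures.
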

\begin{remark}
Simple examples show that the three notions of {weak* mean ergodicity, norm mean ergodicity and unique ergodicity} are truly distinct.
\end{remark}
For the proof of \cref{mainrelations} we need two lemmas. We write $\mathrm{P}(K)$ for the probability measures on a compact space $K$ and remind the reader that we identify $K$ with the space of Dirac measures $\{\delta_x\mid x \in K\} \subset \mathrm{P}(K)$.
\begin{lemma}\label{orbitlem}
Consider a topological dynamical system $(K;S)$ with $S$ left amenable and a point $x \in K$. Then the following identities hold.
\begin{enumerate}[(i)]
\item $\EuScript{K}(K;S)(x) = \overline{Sx} \subset K.$
\item $\EuScript{K}_{\mathrm{c}}(K;S)(x) = \mathrm{P}(\overline{Sx}) \subset \mathrm{P}(K).$
\end{enumerate}
\end{lemma}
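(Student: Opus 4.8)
The plan is to reduce both identities to computing weak* closures of subsets of $\mathrm{C}(K)'$ by applying the evaluation map at $\delta_x$. Fix $x \in K$ and consider the coordinate projection $\pi_x\colon \mathrm{C}(K)'^{\mathrm{C}(K)'}\to\mathrm{C}(K)'$, $R\mapsto R\delta_x$, which is continuous when $\mathrm{C}(K)'$ carries the weak* topology. By \cref{lemmaprop}(iii) together with \cref{lemtopdynsem}(i) (which shows the orbits and the convex orbits of $\EuScript{T}_S'$ are relatively weak* compact) both $\EuScript{K}(K;S)=\overline{\EuScript{T}_S'}$ and $\EuScript{K}_{\mathrm{c}}(K;S)=\overline{\mathrm{co}\,\EuScript{T}_S'}$ are compact. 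Hence $\pi_x$ maps them onto \emph{closed} sets, and since a continuous map sends the closure of a set into the closure of its image, we obtain
\[
\EuScript{K}(K;S)(x)=\overline{\{T_s'\delta_x\mid s\in S\}},\qquad
\EuScript{K}_{\mathrm{c}}(K;S)(x)=\overline{\mathrm{co}\{T_s'\delta_x\mid s\in S\}},
\]
both closures in the weak* topology, using that $\pi_x$ is linear for the second equality. (Note that left amenability of $S$ is not actually used for this lemma.)

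For (i) I would use $T_s'\delta_x=\delta_{sx}$ and that $\iota\colon K\to\mathrm{C}(K)'$, $y\mapsto\delta_y$, is a homeomorphism onto the weak*-closed subset $\iota(K)$ of $\mathrm{C}(K)'$. Then the weak* closure of $\{\delta_{sx}\mid s\in S\}=\iota(Sx)$ equals $\iota(\overline{Sx})$, which under the identification of $K$ with $\iota(K)$ is exactly $\overline{Sx}$.

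For (ii) I would first recall the standard fact that for every compact $L\subseteq K$ one has $\overline{\mathrm{co}}\{\delta_y\mid y\in L\}=\mathrm{P}(L)$: indeed $\mathrm{P}(L)$ is weak* compact and convex and its set of extreme points is exactly $\{\delta_y\mid y\in L\}$, so this is Krein--Milman. Applying it with $L=\overline{Sx}$, it remains to identify $\overline{\mathrm{co}\{\delta_{sx}\mid s\in S\}}$ with $\overline{\mathrm{co}\{\delta_y\mid y\in\overline{Sx}\}}$. The inclusion ``$\subseteq$'' is clear since $Sx\subseteq\overline{Sx}$; conversely, by (i) the set $\{\delta_y\mid y\in\overline{Sx}\}=\overline{\{\delta_{sx}\mid s\in S\}}$ is contained in the closed convex set $\overline{\mathrm{co}\{\delta_{sx}\mid s\in S\}}$, giving ``$\supseteq$''. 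Combining yields $\EuScript{K}_{\mathrm{c}}(K;S)(x)=\mathrm{P}(\overline{Sx})$.

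The only genuinely delicate point, and the one I would state most carefully, is the identity $\pi_x(\overline{A})=\overline{\pi_x(A)}$: this is false for continuous maps in general and holds here only because $\overline{A}$ (namely $\EuScript{K}(K;S)$ resp.\ $\EuScript{K}_{\mathrm{c}}(K;S)$) is compact, so that its continuous image is closed. Thus the compactness assertions of \cref{lemtopdynsem} must be invoked at precisely that step. Everything else — the homeomorphism property of $y\mapsto\delta_y$ and the Krein--Milman description of $\mathrm{P}(L)$ — is routine.
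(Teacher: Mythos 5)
Your proof is correct and follows essentially the same route as the paper: compactness of the (convex) Köhler semigroup to identify the image of the evaluation map at $\delta_x$ with the closure of the (convex hull of the) orbit of $\delta_x$, and the Krein--Milman theorem to identify $\overline{\mathrm{co}}\{\delta_y\mid y\in\overline{Sx}\}$ with $\mathrm{P}(\overline{Sx})$. You merely spell out the details (the evaluation map, the closed-image argument, the equality of the two closed convex hulls) that the paper leaves implicit, and your observation that left amenability is not actually needed is consistent with the paper's own proof.
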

\begin{proof}
Since $\EuScript{K}(K;S)$ is compact, assertion (i) is obvious. For (ii) we obtain, by the Krein--Milman {t}heorem,
\begin{align*}
\mathrm{P}(\overline{Sx})  = \overline{\mathrm{co}\,\{\delta_y\mid y \in \overline{Sx}\}} = \overline{\mathrm{co}\,\{\delta_{sx}\mid s \in S\}} = \EuScript{K}_{\mathrm{c}}(K;S)(x),
\end{align*}
where the latter equation is a consequence of the compactness of $\EuScript{K}_{\mathrm{c}}(K;S)$.
\end{proof}
\begin{lemma}\label{weakstarunique}
  Consider a weak* mean ergodic topological dynamical system $(K;S)$ with $S$ left amenable. Then for each $x \in K$ the orbit system $(\overline{Sx};S)$ is uniquely ergodic.
\end{lemma}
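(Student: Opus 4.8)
The plan is to use the characterization recorded in the remark following \cref{uniquee}: since $S$ is left amenable, weak* mean ergodicity of $(K;S)$ is equivalent to the existence of a zero $Q$ in the convex Köhler semigroup $\EuScript{K}_{\mathrm{c}}(K;S)$. (This comes from \cref{mainfirstchap} applied to $\EuScript{S}=\EuScript{T}_S'$, which is left amenable because $S$ is and which has relatively weak* compact convex orbits by \cref{lemtopdynsem} (i).) So fix $x\in K$; the task is to show that the compact system $(\overline{Sx};S)$ admits exactly one $S$-invariant probability measure.

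First I would produce the candidate. Put $\mu_x\coloneqq Q\delta_x$. By \cref{orbitlem} (ii) we have $Q\delta_x\in\EuScript{K}_{\mathrm{c}}(K;S)(x)=\mathrm{P}(\overline{Sx})$, so $\mu_x$ is a probability measure carried by $\overline{Sx}$; and since $Q$ is a zero, $T_s'Q=Q$ for every $s\in S$, so $T_s'\mu_x=\mu_x$, i.e.\ $\mu_x$ is $S$-invariant. Thus $\mu_x$ defines an invariant probability measure of $(\overline{Sx};S)$.

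For uniqueness, let $\nu$ be an arbitrary $S$-invariant probability measure of $(\overline{Sx};S)$. Viewed as an element of $\mathrm{C}(K)'$ it lies in $\mathrm{P}(\overline{Sx})$ and satisfies $T_s'\nu=\nu$ for all $s\in S$ (test functions extend from $\overline{Sx}$ to $K$ by Tietze's theorem). On the one hand, writing $Q$ as a pointwise limit of a net $(R_\beta)\subset\mathrm{co}\,\EuScript{T}_S'$ of convex combinations of Koopman adjoints, each $R_\beta$ fixes $\nu$, whence $Q\nu=\nu$. On the other hand, \cref{orbitlem} (ii) provides $R\in\EuScript{K}_{\mathrm{c}}(K;S)$ with $R\delta_x=\nu$, and since $Q$ is a zero, $QR=Q$, so $Q\nu=QR\delta_x=Q\delta_x=\mu_x$. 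Hence $\nu=\mu_x$, and $(\overline{Sx};S)$ is uniquely ergodic.

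The computation is short, and no single step is a real obstacle; the point that needs the most care is the correspondence between invariant probability measures of the subsystem $(\overline{Sx};S)$ and fixed vectors of $\EuScript{T}_S'$ inside $\mathrm{P}(\overline{Sx})$. The decisive observation is that any such $\nu$ is at once fixed by $Q$ and contained in the orbit $\EuScript{K}_{\mathrm{c}}(K;S)\delta_x$, which pins it down as $Q\delta_x$.
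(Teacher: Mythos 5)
Your proof is correct and follows essentially the same route as the paper: both arguments hinge on the zero $Q$ of the convex Köhler semigroup, the identity $\EuScript{K}_{\mathrm{c}}(K;S)(x)=\mathrm{P}(\overline{Sx})$ from \cref{orbitlem}, and the computation $\nu=Q\nu=QR\delta_x=Q\delta_x$. The only (harmless) difference is that the paper first passes to the orbit subsystem and uses the zero of $\EuScript{K}_{\mathrm{c}}(\overline{Sx};S)$, whereas you work directly with the zero of the ambient system, which spares you the remark that subsystems of weak* mean ergodic systems are again weak* mean ergodic.
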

\begin{proof}
Since subystems of weak* mean ergodic systems are again weak* mean ergodic, we may assume{---}by passing to an orbit system{---}that $(K;S)$ is transitive. Take $x \in K$ with $K = \overline{Sx}$ and let $Q \in \EuScript{K}_{\mathrm{c}}(K;S)$ be the zero element. The measure $\mu\coloneqq Qx \in \mathrm{C}(K)'$ is $S${-}invariant.\\
Now consider an invariant probability measure ${\nu} \in \mathrm{C}(K)'$. By \cref{orbitlem} we find an operator $T \in \EuScript{K}_{\mathrm{c}}(K;S)$ with ${\nu} = Tx$. This yields 
\begin{align*}
{\nu = Q\nu = QTx = Qx = \mu.}
\end{align*}
\end{proof}
\begin{proof}[of \cref{mainrelations}]
It is obvious that (iii) $\Rightarrow$ (ii) $\Rightarrow$ (i). If $(K;S)$ is transitive{,} then \cref{weakstarunique} proves the equivalence.
\end{proof}
{To conclude this section we characterize weak* and ``pointwise'' mean ergodicity for tame metric systems. Our theorem extends Theorem 4.5 of \cite{Roma2013} where a similar result was shown for $\N_0$-actions with metrizable Ellis semigroup (called \emph{ordinary systems}).}
\begin{theorem}\label{pointwisemain}
Consider a tame metric topological dynamical system $(K;S)$ with $S$ amenable. Then the following assertions are equivalent.
\begin{enumerate}[(i)]
\item For each operator net $(T_{\alpha})_{\alpha \in A}$ which is right ergodic for $\EuScript{T}_S$ with respect to the $\sigma(\mathrm{C}(K),\ell^1(K))${-}topology, each $f \in \mathrm{C}(K)$ and each $x \in K$ the limit $\lim_{\alpha} (T_\alpha f(x))_{\alpha \in A}$ exists.
\item The system $(K;S)$ is weak* mean ergodic.
\item For each $x \in X$ the system $(\overline{Sx};S)$ contains a unique minimal set.
\end{enumerate}
\end{theorem}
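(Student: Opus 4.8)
The plan is to establish the cycle $(i)\Rightarrow(ii)\Rightarrow(iii)\Rightarrow(i)$. I will use throughout that $\sigma(\mathrm{C}(K),\ell^1(K))$-convergence on $\mathrm{C}(K)$ coincides with pointwise convergence on $K$ (since $\lin\{\delta_x\mid x\in K\}$ is norm dense in $\ell^1(K)$), that $\EuScript{T}_S'$ is left amenable (because $S$ is amenable) with relatively weak* compact convex orbits by \cref{lemtopdynsem}\,(i), so that by \cref{mainfirstchap} and \cref{kerneldesc} the system $(K;S)$ is weak* mean ergodic if and only if the kernel of $\EuScript{K}_{\mathrm{c}}(K;S)$ — that is, its set of right zeros — is a singleton, and the orbit description $\EuScript{K}_{\mathrm{c}}(K;S)(x)=\mathrm{P}(\overline{Sx})$ from \cref{orbitlem}\,(ii). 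Note that if $Q$ is a right zero of $\EuScript{K}_{\mathrm{c}}(K;S)$, then $T_s'Q=Q$ for all $s\in S$, so $Q\mu$ is an $S$-invariant measure for every $\mu$.

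For $(i)\Rightarrow(ii)$ I argue by contradiction: suppose the kernel of $\EuScript{K}_{\mathrm{c}}(K;S)$ contained two distinct right zeros $Q_1\ne Q_2$. Since $(K;S)$ is tame and metric, the embedding $\Phi\colon\EuScript{K}_{\mathrm{c}}(K;S)\longrightarrow\prod_{k\in\N}\overline{\mathrm{co}\,\EuScript{T}_Sf_k}^{\,\C^K}$, $R\mapsto(x\mapsto\langle f_k,R\delta_x\rangle)_{k\in\N}$, constructed in the proof of \cref{chartame} for a dense sequence $(f_k)_{k\in\N}$ in $\mathrm{C}(K)$, is injective; hence $Q_1\delta_x\ne Q_2\delta_x$ for some $x\in K$, and we may choose $f\in\mathrm{C}(K)$ with $\langle f,Q_1\delta_x\rangle\ne\langle f,Q_2\delta_x\rangle$. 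Exactly as in the proof of \cref{mainfirstchap} one builds a net $(R_\alpha)_{\alpha\in A}$ in $\mathrm{co}\,\EuScript{T}_S'$ that, by \cref{charer}, is left ergodic for $\EuScript{T}_S'$ with respect to the weak* topology but accumulates at both $Q_1$ and $Q_2$. Its pre-adjoint net $(P_\alpha)_{\alpha\in A}$ in $\mathrm{co}\,\EuScript{T}_S$ is then, upon testing against Dirac measures, right ergodic for $\EuScript{T}_S$ with respect to $\sigma(\mathrm{C}(K),\ell^1(K))$, while $P_\alpha f(x)=\langle f,R_\alpha\delta_x\rangle$ accumulates at two distinct values, contradicting $(i)$. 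Thus the kernel is a singleton and $(K;S)$ is weak* mean ergodic. For $(ii)\Rightarrow(iii)$, \cref{weakstarunique} shows that every orbit system $(\overline{Sx};S)$ is uniquely ergodic; since $S$ is amenable, Day's fixed point theorem yields an $S$-invariant probability measure on each minimal subset, and two distinct minimal sets would carry mutually singular such measures, which is absurd, so $(\overline{Sx};S)$ has a unique minimal set.

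For $(iii)\Rightarrow(i)$, let $(T_\alpha)_{\alpha\in A}$ be right ergodic for $\EuScript{T}_S$ with respect to $\sigma(\mathrm{C}(K),\ell^1(K))$ and fix $f\in\mathrm{C}(K)$ and $x\in K$. The net $(T_\alpha'\delta_x)_{\alpha\in A}$ lies in the weak* compact set $\mathrm{P}(\overline{Sx})$, and every accumulation point $\nu$ of it is $S$-invariant, since along a subnet with $T_\beta'\delta_x\to\nu$ one has $\langle(\mathrm{Id}-T_s)f,\nu\rangle=\lim_\beta\langle T_\beta(\mathrm{Id}-T_s)f,\delta_x\rangle=0$ for all $s\in S$ and $f\in\mathrm{C}(K)$. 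By $(iii)$ the set $\overline{Sx}$ has a unique minimal set $M_x$, and since $(M_x;S)$ is minimal, tame and metric with $S$ amenable, \cref{uniquetame} shows it is uniquely ergodic, say with invariant measure $\mu_{M_x}$. It is therefore enough to prove that every $S$-invariant $\nu\in\mathrm{P}(\overline{Sx})$ is supported on $M_x$; then $\nu=\mu_{M_x}$, the net $(T_\alpha'\delta_x)$ has a unique accumulation point, hence converges, and $\lim_\alpha T_\alpha f(x)=\langle f,\mu_{M_x}\rangle$ exists. To prove this I pass to the Ellis semigroup $\EuScript{E}\coloneqq\mathrm{E}(\overline{Sx};S)$ of the orbit system. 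First, $\nu$ is $\EuScript{E}$-invariant: for $p\in\EuScript{E}$ and $g\in\mathrm{C}(\overline{Sx})$ the function $g\circ p$ lies in $\overline{\EuScript{T}_Sg}^{\,\C^{\overline{Sx}}}$, which by tameness is a pointwise compact subset of $\mathrm{B}_1(\overline{Sx})$ on which integration against $\nu$ is pointwise continuous by the main theorem of \cite{Rose1977}; writing $p=\lim_\gamma s_\gamma$ with $s_\gamma\in S$ gives $\langle g,p_*\nu\rangle=\int g\circ p\,\mathrm{d}\nu=\lim_\gamma\int g\circ s_\gamma\,\mathrm{d}\nu=\langle g,\nu\rangle$, so $p_*\nu=\nu$. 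Secondly, fix a minimal left ideal $L$ of $\EuScript{E}$ and an idempotent $u\in L$; every point $w=pz$ with $p\in L$ and $z\in\overline{Sx}$ satisfies $w=u'w$ for the idempotent $u'$ of the maximal subgroup of $L$ containing $p$, hence is an almost periodic point and therefore lies in a minimal subset of $\overline{Sx}$, that is, in $M_x$. In particular $u$ maps $\overline{Sx}$ into $M_x$, so $\nu(M_x)=u_*\nu(M_x)=\nu\bigl(u^{-1}(M_x)\bigr)=\nu(\overline{Sx})=1$ and $\nu$ is supported on $M_x$.

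The main obstacle is the implication $(iii)\Rightarrow(i)$: the crux is to upgrade the $S$-invariance of an accumulation measure to $\EuScript{E}$-invariance — this is exactly where tameness enters, via Rosenthal's continuity theorem for integration against measures on pointwise compact subsets of the first Baire class — and then to exploit the unique minimal set through an idempotent of a minimal left ideal of the Ellis semigroup. The remaining points are routine: subsystems of tame metric systems are again tame and metric, so \cref{chartame} and \cref{uniquetame} apply to $(\overline{Sx};S)$ and $(M_x;S)$; the structure of minimal left ideals and the characterisation of almost periodic points in a compact right topological semigroup are standard (see \cite{BeJuMi1989}); and, as indicated, $(i)\Rightarrow(ii)$ and $(ii)\Rightarrow(iii)$ rest on the results of Sections 3 and 4 through the singleton-kernel criterion for weak* mean ergodicity.
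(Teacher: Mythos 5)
Your proof is correct and follows the same cycle (i)$\Rightarrow$(ii)$\Rightarrow$(iii)$\Rightarrow$(i) with the same underlying ingredients (tameness entering through Rosenthal's theorem, the singleton-kernel criterion for weak* mean ergodicity, unique ergodicity of minimal tame systems), but two of the three implications are executed differently from the paper. For (i)$\Rightarrow$(ii) the paper uses the Fréchet--Urysohn property of $\EuScript{K}_{\mathrm{c}}(K;S)$ to extract right ergodic \emph{sequences} converging to $Q_1$ and $Q_2$, alternates them, and applies Lebesgue's theorem in the $\sigma(\mathrm{C}(K)'',\mathrm{C}(K)')$-topology; you instead build the alternating \emph{net} from the proof of \cref{mainfirstchap} and detect $Q_1\neq Q_2$ on a single Dirac measure via the injectivity of $\Phi$ from \cref{chartame} --- both uses of tameness are legitimate, and your contradiction is arguably more direct since it avoids dominated convergence. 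For (iii)$\Rightarrow$(i) the paper factors the argument through \cref{tameunique2} (unique minimal set implies uniquely ergodic) and \cref{tameergodicnets} (pointwise ergodic nets are weak ergodic nets for tame systems), whereas you work directly with the accumulation points of $(T_\alpha'\delta_x)_{\alpha\in A}$ in $\mathrm{P}(\overline{Sx})$, upgrade their $S$-invariance to $\mathrm{E}(\overline{Sx};S)$-invariance via Rosenthal's theorem, and localize them on the unique minimal set with an idempotent of a minimal left ideal; this in effect re-proves \cref{tameunique2} (your almost-periodicity step is exactly \cref{idempotent}, which you could simply cite) and renders \cref{tameergodicnets} unnecessary. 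The implication (ii)$\Rightarrow$(iii) coincides with the paper's. Two points you leave implicit but which do hold: the functionals $f\mapsto T_\alpha f(x)$ are genuinely elements of $\mathrm{P}(\overline{Sx})$ because $\mathrm{P}(\overline{Sx})$ is weak* closed and $\EuScript{K}_{\mathrm{c}}(K;S)(x)=\mathrm{P}(\overline{Sx})$ by \cref{orbitlem}; and the idempotent $u$ is Borel measurable (it is of Baire class $1$ by tameness), which is what makes the push-forward identity $u_*\nu=\nu$ meaningful --- alternatively you could avoid push-forwards altogether by integrating a positive continuous function vanishing on $M_x$ against $\nu$, as the paper does in \cref{tameunique2}.
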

Once again we need two lemmas. The first one is a generalization of Lemma 2.3 of \cite{Roma2011} to our setting.
\begin{lemma}\label{idempotent}
Let $(K;S)$ be a topological dynamical system and consider $\psi \in \mathrm{ker}\,\mathrm{E}(K;S)$. Then $\psi(K)$ is contained in the union of minimal sets.
\end{lemma}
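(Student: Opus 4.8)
The plan is to exploit the structure theory of the compact right topological semigroup $\mathrm{E}(K;S)$ (see Section 1.2 of \cite{BeJuMi1989}). I would first record two facts. The semigroup $\mathrm{E}(K;S)$ is a compact right topological semigroup under composition — this uses the joint continuity of the action together with the density of $S$ — and its kernel $\mathrm{ker}\,\mathrm{E}(K;S)$ equals the union of the minimal left ideals of $\mathrm{E}(K;S)$, each of which is closed. Moreover, for every $y \in K$ one has $\overline{Sy} = \mathrm{E}(K;S)y$: the evaluation $\eta \mapsto \eta y$ is continuous on $K^K$, so $\mathrm{E}(K;S)y$ is compact, it clearly contains $Sy$, and it is contained in $\overline{Sy}$ since $S$ is dense in $\mathrm{E}(K;S)$.

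For the main argument, since $\psi \in \mathrm{ker}\,\mathrm{E}(K;S)$ I would fix a minimal left ideal $L$ of $\mathrm{E}(K;S)$ with $\psi \in L$. As $L$ is a left ideal, $\mathrm{E}(K;S)\psi \subseteq L$; but $\mathrm{E}(K;S)\psi$ is itself a nonempty left ideal, so minimality of $L$ forces $\mathrm{E}(K;S)\psi = L$, and the same argument gives $\mathrm{E}(K;S)\eta = L$ for every $\eta \in L$. Now fix $x \in K$ and set $y \coloneqq \psi(x)$. Then
\begin{align*}
\overline{Sy} = \mathrm{E}(K;S)y = \bigl(\mathrm{E}(K;S)\psi\bigr)(x) = L(x), \qquad \text{where } L(x) \coloneqq \{\eta(x)\mid \eta \in L\}.
\end{align*}
If $z \in \overline{Sy}$, write $z = \eta(x)$ with $\eta \in L$; then $\overline{Sz} = \mathrm{E}(K;S)z = \bigl(\mathrm{E}(K;S)\eta\bigr)(x) = L(x) = \overline{Sy}$. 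Hence every point of the nonempty closed $S$-invariant set $\overline{Sy}$ has the same orbit closure $\overline{Sy}$, so $\overline{Sy}$ is a minimal set and it contains $y = \psi(x)$. Since $x \in K$ was arbitrary, $\psi(K)$ is contained in the union of the minimal sets of $(K;S)$.

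I do not expect a serious obstacle: the substantive input — existence and closedness of minimal left ideals and the description of $\mathrm{ker}\,\mathrm{E}(K;S)$ — is standard and can be quoted from \cite{BeJuMi1989}, and verifying that $\mathrm{E}(K;S)$ is a compact right topological semigroup under composition is routine. The only genuine idea is the identity $\mathrm{E}(K;S)\psi = L$, which pins the orbit closure of $\psi(x)$ to $L(x)$ independently of $\psi$ and thereby makes $\overline{S\psi(x)}$ minimal. Equivalently, one could first produce an idempotent $u \in L$ with $u\psi = \psi$ and then argue that $uz = z$ for such a $u$ forces $z$ to lie in a minimal set; the argument above just inlines this.
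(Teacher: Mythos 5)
Your proof is correct and follows essentially the same route as the paper: both locate $\psi$ in a minimal left ideal $L$ of the compact right topological semigroup $\mathrm{E}(K;S)$ (the paper via Theorem 1.2.12 of \cite{BeJuMi1989}) and then conclude that $L(x)$ is a minimal set. The only difference is that the paper quotes Proposition 1.6.12 of \cite{BeJuMi1989} for the minimality of $L(x)$, whereas you inline its proof via the identities $\mathrm{E}(K;S)\eta = L$ for $\eta \in L$ and $\overline{Sy} = \mathrm{E}(K;S)y$.
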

\begin{proof}
By Theorem 1.2.12 in \cite{BeJuMi1989} we find a minimal left ideal $I$ of $\mathrm{E}(K;S)$ containing $\psi$. However, the set $I(x)$ is minimal by Proposition 1.6.12 in \cite{BeJuMi1989}.
\end{proof}
We need a more general version of \cref{uniquetame}.
\begin{lemma}\label{tameunique2}
Let $(K;S)$ be a tame metric topological dynamical system with $S$ amenable containing a unique minimal subset. Then $(K;S)$ is uniquely ergodic. 
\end{lemma}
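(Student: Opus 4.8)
The plan is to reduce unique ergodicity of $(K;S)$ to that of its unique minimal subsystem, and to carry out this reduction by using an element of the kernel of the Ellis semigroup to collapse every invariant measure onto the minimal set. So let $M\subseteq K$ be the unique minimal subset. It is closed, and the subsystem $(M;S)$ is minimal, tame (subsystems of tame systems are tame) and metric with $S$ amenable, so \cref{uniquetame} shows that $(M;S)$ is uniquely ergodic; write $\mu\in\mathrm{P}(M)\subseteq\mathrm{P}(K)$ for its unique invariant probability measure. Since a probability measure on $K$ concentrated on $M$ and invariant for $(K;S)$ restricts to an invariant probability measure of $(M;S)$, it then suffices to prove that \emph{every} $S$-invariant probability measure $\nu$ on $K$ is supported on $M$; this will force $\nu=\mu$ and hence unique ergodicity of $(K;S)$.

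To show $\nu$ is supported on $M$, I would fix an element $\psi$ in the (non-empty) kernel $\ker\mathrm{E}(K;S)$ of the Ellis semigroup, which exists by the structure theory of compact right topological semigroups (Theorem 1.2.12 in \cite{BeJuMi1989}, as already used in the proof of \cref{idempotent}). By \cref{idempotent}, $\psi(K)$ lies in the union of the minimal subsets of $(K;S)$, which by hypothesis is just $M$, so $\psi(K)\subseteq M$. Since $(K;S)$ is tame and metric, \cref{chartame} together with the isomorphism $\EuScript{E}(K;S)\cong\mathrm{E}(K;S)$ shows that $\mathrm{E}(K;S)$ is a Fréchet--Urysohn space; hence there is a sequence $(s_n)_{n\in\N}$ in $S$ converging pointwise to $\psi$. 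In particular $\psi$ is of Baire class $1$, hence Borel, so the pushforward $\psi_*\nu$ is a regular Borel probability measure on $K$ supported on $\overline{\psi(K)}\subseteq M$. For $f\in\mathrm{C}(K)$ we have $f\circ s_n\to f\circ\psi$ pointwise with $|f\circ s_n|\le\|f\|_\infty$, so dominated convergence and the invariance of $\nu$ give
\begin{align*}
\int_K f\,\mathrm{d}(\psi_*\nu)=\int_K f\circ\psi\,\mathrm{d}\nu=\lim_{n\to\infty}\int_K f\circ s_n\,\mathrm{d}\nu=\int_K f\,\mathrm{d}\nu .
\end{align*}
Thus $\psi_*\nu=\nu$, so $\nu$ is supported on $M$, and therefore $\nu=\mu$ by the first paragraph.

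The one genuine obstacle is giving meaning to $\psi_*\nu$ and to the interchange of limit and integral: a priori $\psi\in\mathrm{E}(K;S)$ is only a pointwise limit of a \emph{net} of continuous maps, so it need be neither Borel measurable nor a sequential limit, and dominated convergence along nets fails in general. This is exactly where tameness enters — via the Fréchet--Urysohn property of $\mathrm{E}(K;S)$ it upgrades the net to a sequence, which simultaneously makes $\psi$ Baire class $1$ (so the pushforward is meaningful) and makes the dominated convergence step legitimate. Amenability of $S$, by contrast, is used only in the appeal to \cref{uniquetame} for the minimal subsystem.
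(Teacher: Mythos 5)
Your proof is correct and follows essentially the same route as the paper's: both take a kernel element $\psi$ of the Ellis semigroup with $\psi(K)\subseteq M$ via \cref{idempotent}, use the Fr\'echet--Urysohn property coming from tameness to realize $\psi$ as a pointwise limit of a sequence $(s_n)$ in $S$, apply dominated convergence together with invariance of the measure to conclude that every invariant probability measure is supported on $M$, and finish with \cref{uniquetame} applied to the minimal subsystem $(M;S)$. The only cosmetic difference is that you phrase the support argument via the pushforward $\psi_*\nu=\nu$, whereas the paper tests against positive continuous functions vanishing on $M$.
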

\begin{proof}
Denote the unique minimal subset by $M$ and consider an invariant probability measure $\mu \in \mathrm{C}(K)'$. Since the support of $\mu$ is closed and invariant, it contains $M$.\\
Now take a minimal idempotent $\psi = \lim_{n \rightarrow \infty} \color{red} \varphi_{s_n} \color{black} \in \mathrm{E}(K;S)$. Then $\psi(K) \subset M$ by \cref{idempotent} and for each positive $f \in \mathrm{C}(K)$ vanishing on $M$ we obtain
\begin{align*}
\langle f,\mu \rangle = {\lim_{n \rightarrow \infty}} \langle f, T_{s_n}'\mu \rangle  = \langle f, T_{\psi}'\mu \rangle = \langle T_{\psi}f, \mu \rangle =0.
\end{align*}
This shows $\supp \mu = M$ and \cref{uniquetame} proves the claim.
\end{proof}
Before proceeding to the proof of \cref{pointwisemain}, we observe that pointwise and weak ergodic nets are the same for tame systems.
\begin{lemma}\label{tameergodicnets}
Consider a tame metric topological dynamical $(K;S)$ with $S$ amenable. Each operator net $(T_{\alpha})_{\alpha \in A}$ which is right ergodic for $\EuScript{T}_S$ with respect to the $\sigma(\mathrm{C}(K),\ell^1(K))${-}topology is right ergodic for $\EuScript{T}_S$ with respect to the $\sigma(\mathrm{C}(K),\mathrm{C}(K)')${-}topology.
\end{lemma}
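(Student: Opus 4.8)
The plan is to fix $f \in \mathrm{C}(K)$ and $s \in S$, set $g := (\mathrm{Id}-T_s)f \in \mathrm{C}(K)$, realize the orbit closure of $g$ as a Rosenthal compact space, and then deduce the assertion from its Fréchet--Urysohn property together with Lebesgue's dominated convergence theorem. First I would reduce the statement: by definition each $T_\alpha$ is a limit, in the product topology, of operators in $\mathrm{co}\,\EuScript{T}_S$, so $T_\alpha g$ is a pointwise limit on $K$ of functions $Rg$ with $R \in \mathrm{co}\,\EuScript{T}_S$; since the Koopman operators, and hence the elements of $\mathrm{co}\,\EuScript{T}_S$, are contractions on $\mathrm{C}(K)$, it follows that $T_\alpha g \in L := \overline{\mathrm{co}\,\EuScript{T}_S g}^{\C^K}$ and $\|T_\alpha g\| \le \|g\|$ for each $\alpha$. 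Moreover, for a uniformly bounded net in $\mathrm{C}(K)$, convergence in $\sigma(\mathrm{C}(K),\ell^1(K))$ amounts to pointwise convergence on $K$ (test against the Dirac sequences $\delta_x \in \ell^1(K)$). Hence the hypothesis says precisely that $T_\alpha g \to 0$ pointwise on $K$; in particular $0 \in L$, and it remains to show that $\langle T_\alpha g,\mu\rangle \to 0$ for every $\mu \in \mathrm{C}(K)'$.

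Next I would construct the compact space and identify its topological type. Since $(K;S)$ is tame, $\EuScript{T}_S g$ is relatively sequentially compact in $\C^K$, hence so is $\mathrm{co}\,\EuScript{T}_S g$ and $L \subset \mathrm{B}_1(K)$ by Corollary 5G of \cite{BoFrTa1978} (this is the argument already used in the proof of \cref{chartame}). By Tychonoff's theorem $L$ is compact, so the inclusion $L \hookrightarrow \mathrm{B}_1(K)$ is a topological embedding; as $K$ is metrizable, $L$ is therefore a Rosenthal compact space, and hence a Fréchet--Urysohn --- in particular sequential --- space by the Bourgain--Fremlin--Talagrand theorem.

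Now I would fix $\mu \in \mathrm{C}(K)'$ and consider the map
\begin{align*}
\Theta_\mu\colon L \longrightarrow \C,\qquad h \mapsto \int_K h\,\mathrm{d}\mu,
\end{align*}
which is well defined because Baire class $1$ functions are bounded and Borel measurable, and which is sequentially continuous on $L$ by Lebesgue's dominated convergence theorem (with the uniform bound $\|g\|$). Since $L$ is sequential and $\C$ is Hausdorff, $\Theta_\mu$ is continuous (Proposition 1.6.15 in \cite{Enge1989}); therefore $\langle T_\alpha g,\mu\rangle = \Theta_\mu(T_\alpha g) \to \Theta_\mu(0) = 0$. As $\mu$, and then $f$ and $s$, were arbitrary, this shows that $(T_\alpha)_{\alpha \in A}$ is right ergodic for $\EuScript{T}_S$ with respect to $\sigma(\mathrm{C}(K),\mathrm{C}(K)')$.

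The step I expect to be the main obstacle is the identification of $L$ as a Rosenthal, hence Fréchet--Urysohn, compact space: this is the only place where tameness genuinely enters, and it is what bridges the gap between the net hypothesis and the sequential tools (dominated convergence and the characterization of continuity on sequential spaces); the contractivity bound and the reduction of $\sigma(\mathrm{C}(K),\ell^1(K))$-convergence to pointwise convergence are routine. If one prefers to avoid Proposition 1.6.15 of \cite{Enge1989}, the conclusion also follows by a subnet argument: were $\langle T_\alpha g,\mu\rangle \not\to 0$, one passes to a subnet bounded away from $0$, extracts from its range a sequence converging to $0$ using the Fréchet--Urysohn property of $L$, and derives a contradiction via dominated convergence.
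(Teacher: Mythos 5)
Your proposal is correct and follows the paper's proof in essence: both arguments use tameness to realize the orbit closure of $(\mathrm{Id}-T_s)f$ as a pointwise compact subset of $\mathrm{B}_1(K)$ and then upgrade pointwise convergence to weak convergence inside that compact set. The only difference is that the paper delegates the final step to the main theorem of Rosenthal's paper, whereas you unpack it via the Bourgain--Fremlin--Talagrand Fréchet--Urysohn property and dominated convergence, which is a legitimate (and self-contained) way to justify the same citation.
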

\begin{proof}
Take an operator net $(T_{\alpha})_{\alpha \in A}$ which is right ergodic with respect to the $\sigma(\mathrm{C}(K),\ell^1(K))${-}topology, $f \in \mathrm{C}(K)$ and $s \in S$. We then have $\lim_{\alpha} (T_{\alpha}(f-T_sf))(x) = 0$ for all $x \in K$.\\
Equip $\mathrm{B}_1(K)$ with the topology of pointwise convergence. Since $(K;S)$ is tame, the set $\overline{\mathrm{co}\,\EuScript{T}_S (\mathrm{Id}-T_s)f}$ is compact in $\mathrm{B_1}(K)$ and it contains the net $(T_{\alpha}(\mathrm{Id}-T_s)f)_{\alpha \in A}$. The main theorem of \cite{Rose1977} therefore implies 
\begin{align*}
\lim_{\alpha} T_{\alpha}(\mathrm{Id}-T_s)f = 0
\end{align*}
with respect to the $\sigma(\mathrm{C}(K),\mathrm{C}(K)')${-}topology.
\end{proof}
\begin{proof}[of \cref{pointwisemain}]
If $(K;S)$ is weak* mean ergodic, then each orbit is uniquely ergodic by \cref{weakstarunique} and{---}since every minimal set supports an invariant probability measure{---}contains only one minimal set. {This proves the implication \enquote{(ii) $\Rightarrow$ (iii)}.} \smallskip\\
By \cref{tameunique2} assertion (iii) implies that each orbit is uniquely ergodic. Thus, for each right {$\sigma(\mathrm{C}(K),\mathrm{C}(K)')$-ergodic net} $(T_{\alpha})_{\alpha \in A}$ for $\EuScript{T}_S$ we obtain that $(T_{\alpha}f)_{\alpha \in A}$ converges weakly and thus pointwise on each orbit. But then $(T_{\alpha}f)_{\alpha \in A}$ converges pointwise on $K$. Combined with \cref{tameergodicnets} this implies (i).\smallskip\\
Finally suppose that (i) holds and take two elements $Q_1,Q_2 \in \ker(\EuScript{K}_{\mathrm{c}}(K;S))$. Since $\EuScript{K}_{\mathrm{c}}(K;S)$ is a Fréchet{--Urysohn} space, we find {right ergodic sequences} \\
$(T_{i,n}')_{n \in \N}\subset \mathrm{co}\, \EuScript{T}_S'$ for $\EuScript{T}_S'$ converging to $Q_i$ for $i=1,2$. Consider the sequences $(T_{i,n})_{n \in \N}$ consisting of the pre{-}adjoints and observe that for each $f \in \mathrm{C}(K)$ we have $\lim_{n \rightarrow {\infty}} T_{i,n}f = Q_i'f$ with respect to the $\sigma(\mathrm{C}(K)'',\mathrm{C}(K)')${-}topology for $i=1,2$. By assumption the sequence obtained by alternating the members of $(T_{1,n}f)_{n \in \N}$ and $(T_{2,n}f)_{n \in \N}$ converges pointwise and thus, by Lebegue's Theorem, in {the} $\sigma(\mathrm{C}(K)'',\mathrm{C}(K)')${-}topology. This yields $Q_1 = Q_2$.
\end{proof}
The next example shows that even for tame systems the weak* convergence of a single ergodic sequence does not ensure weak* mean ergodicity.
\begin{example}\label{rolandex}
Consider the space $\{0,1\}^{\N}$ with the product topology (which is compact and metrizable) and endow it with the shift $\varphi$ given by \\
$\varphi \left((a_n)_{n \in \N}\right) \coloneqq (a_{n+1})_{n \in \N}$ for $(a_n)_{n \in \N} \in \{0,1\}^{\N}$. 
Consider the point $x = (x_n)_{n \in \N} \in  \{0,1\}^{\N}$ with
\begin{align*}
x_n = \begin{cases} 1 & \text{if } n \in \{k(N)+1,...,k(N)+N\},\\
0 & \text{if } n \in \{k(N)+N+1,...,k(N+1)\},
\end{cases}
\end{align*}
where
\begin{align*}
k(N)\coloneqq \sum_{n=1}^{N-1} (n+10^n) = \frac{N \cdot (N-1)}{2} + 10 \cdot \frac{10^{N-1}-1}{9}
\end{align*}
for $N \in \N$. 
For illustration we give the start of this sequence as
\begin{align*}
x = (1,\underbrace{0,......,0}_{10 \text{ zeroes}},1,1,\underbrace{0,......,0}_{100 \text{ zeroes}},1,1,1,\underbrace{0,......,0}_{1000 \text{ zeroes}},1,1,1,1,0,...........).
\end{align*}
Now consider the compact subspace $K\coloneqq \overline{\{\varphi^n(x)\mid n \in \N_0\}}$ and the system $(K;S)$ with $S\coloneqq \{(\varphi|_K)^n\mid n \in \N_0\}$. {It is easy to see that} $K$ is countable and thus $\mathrm{E}(K;S) \subset K^K$ has cardinality at most $\mathfrak{c}$. Therefore the system is tame by Theorem 1.2 of \cite{Glas2006a}.\\
For each $f \in \mathrm{C}(K)$ the Cesàro means $(\frac{1}{N}\sum_{n=0}^{N-1}T_{\varphi}^nf)_{n\in \N}$ converge pointwise (and therefore with respect to the weak* topology) to the function $Pf\colon K \longrightarrow \C$ with
\begin{align*}
Pf((x_n)_{n \in \N}) \coloneqq \begin{cases} f((1)_{n \in \N})&  \text{if there is } N \in \N \text{ with } x_n = 1 \text{ for all } n \geq N,\\
f((0)_{n \in \N}) & \text{else}.
\end{cases}
\end{align*}
On the other hand, the constant zero sequence and the constant one sequence are two fixed points of the system. Thus $(K;S)$ is not weak* mean ergodic by \cref{pointwisemain}.
\end{example}

\parindent 0pt
\parskip 0.5\baselineskip
\setlength{\footskip}{4ex}
\bibliographystyle{alpha}
\bibliography{literature} 

\begin{thebibliography}{EFHN15}

\bibitem[ABR12]{ABR2011}
A.~Albanese, J.~Bonet, and W.~Ricker.
\newblock Mean ergodic semigroups of operators.
\newblock {\em RACSAM}, 106:299--319, 2012.

\bibitem[BFT78]{BoFrTa1978}
J.~Bourgain, D.~H. Fremlin, and M.~Talagrand.
\newblock Pointwise compact sets of {B}aire{-}measurable functions.
\newblock {\em Amer. J. Math.}, 100:845--886, 1978.

\bibitem[BJM78]{BeJuMi1978}
J.~F. Berglund, H.~Junghenn, and P.~Milnes.
\newblock {\em Compact Right Topological Semigroups and Generalizations of
  Almost Periodicity}.
\newblock Wiley, 1978.

\bibitem[BJM89]{BeJuMi1989}
J.~F. Berglund, H.~Junghenn, and P.~Milnes.
\newblock {\em Analysis on Semigroups. Function Spaces, Compactifications,
  Representations}.
\newblock Wiley, 1989.

\bibitem[Bou65]{Bour1965}
N.~Bourbaki.
\newblock {\em Éléments de Mathématique. Livre VI. Intégration.}
\newblock Hermann, 1965.

\bibitem[Bro70]{Brow1976}
J.~R. Brown.
\newblock {\em Ergodic Theory and Topological Dynamics}.
\newblock Academic Press Inc., 1970.

\bibitem[Day61]{Day1961}
M.~M. Day.
\newblock Fixed{-}point theorems for compact convex sets.
\newblock {\em Illinois J. Math.}, 5:585--590, 1961.

\bibitem[Ebe49]{Eber1949}
W.~F. Eberlein.
\newblock Abstract ergodic theorems and weak almost periodic functions.
\newblock {\em {Trans. Amer. Math. Soc.}}, 67:217--240, 1949.

\bibitem[EFHN15]{EFHN2015}
T.~Eisner, B.~Farkas, M.~Haase, and R.~Nagel.
\newblock {\em Operator Theoretic Aspects of Ergodic Theory}.
\newblock Springer, 2015.

\bibitem[Ell60]{Elli1960}
R.~Ellis.
\newblock A semigroup associated with a transformation group.
\newblock {\em Trans. Amer. Math. Soc.}, 94:272--281, 1960.

\bibitem[Ell69]{Elli1969}
R.~Ellis.
\newblock {\em Lectures on Topological Dynamics}.
\newblock W. A. Benjamin, 1969.

\bibitem[Eng89]{Enge1989}
R.~Engelking.
\newblock {\em General Topology.}
\newblock Heldermann, revised and completed edition, 1989.

\bibitem[GK14]{GerKun2012}
M.~Gerlach and M.~Kunze.
\newblock Mean ergodic theorems on norming dual pairs.
\newblock {\em Ergodic Theory Dynam. Systems}, 34:1210--1229, 2014.

\bibitem[Gla06]{Glas2006a}
E.~Glasner.
\newblock On tame dynamical systems.
\newblock {\em Colloq. Math.}, 105:283--295, 2006.

\bibitem[Gla07a]{Glas2007a}
E.~Glasner.
\newblock Enveloping semigroups in topological dynamics.
\newblock {\em Topology Appl.}, 154:2344--2363, 2007.

\bibitem[Gla07b]{Glas2007b}
E.~Glasner.
\newblock The structure of tame minimal dynamical systems.
\newblock {\em Ergodic Theory Dynam. Systems}, 27:1819--1837, 2007.

\bibitem[Gla08]{Glas2008}
E.~Glasner.
\newblock {\em Ergodic Theory via Joinings}.
\newblock Amer. Math. Soc., 6th edition, 2008.

\bibitem[GM06]{GlMe2006}
E.~Glasner and M.~Megrelishvili.
\newblock Hereditarily non-sensitive dynamical systems and linear
  representations.
\newblock {\em Colloq. Math.}, 104:223--283, 2006.

\bibitem[GM12]{GlMe2012}
E.~Glasner and M.~Megrelishvili.
\newblock Representations of dynamical systems on banach spaces not containing
  $l_1$.
\newblock {\em Trans. Amer. Math. Soc.}, 364:6395--6424, 2012.

\bibitem[GM13]{GlMe2013}
E.~Glasner and M.~Megrelishvili.
\newblock Banach representations and affine compactifications of dynamical
  systems.
\newblock {\em Fields Inst. Commun.}, 68:75--144, 2013.

\bibitem[GM15]{GlMe2015}
E.~Glasner and M.~Megrelishvili.
\newblock Eventual nonsensitivity and tame dynamical systems.
\newblock arXiv:1405.2588, 2015.
\newblock [unpublished; accessed online, 25-07-2016].

\bibitem[HNV03]{HaNaVa2003}
K.~P. Hart, J.~Nagata, and J.~E. Vaughan.
\newblock {\em Encyclopedia of General Topology}.
\newblock Elsevier, 2003.

\bibitem[Hua06]{Huan2006}
W.~Huang.
\newblock Tame systems and scrambled pairs under an abelian group action.
\newblock {\em Ergodic Theory Dynam. Systems}, 26:1549--1567, 2006.

\bibitem[Iwa80]{Iwan1980}
A.~Iwanik.
\newblock On pointwise convergence of cesaro means and separation properties
  for markov operators on {C}({X}).
\newblock {\em Bull. Pol. Acad. Sci. Math.}, 29:515--520, 1980.

\bibitem[Jar81]{Jarc1981}
H.~Jarchow.
\newblock {\em Locally Convex Spaces}.
\newblock Teubner, 1981.

\bibitem[Kö94]{Koehl1994}
A.~Köhler.
\newblock {\em Enveloping Semigroups in Operator Theory and Topological
  Dynamics}.
\newblock PhD thesis, University of Tübingen, 1994.

\bibitem[Kö95]{Koehl1995}
A.~Köhler.
\newblock Enveloping semigroups for flows.
\newblock {\em Proc. Roy. Irish Acad. Sect.}, 95:179--191, 1995.

\bibitem[KL07]{KeLi2007}
D.~Kerr and H.~Li.
\newblock Independence in topological and {C}*{-}dynamics.
\newblock {\em Math. Ann.}, 338:869--926, 2007.

\bibitem[Kre85]{Kren1985}
U.~Krengel.
\newblock {\em Ergodic Theorems}.
\newblock de Gruyter, 1985.

\bibitem[KW81]{KaWe1981}
Y.~Katznelson and B.~Weiss.
\newblock When all points are recurrent/generic.
\newblock In A.~Katok, editor, {\em Ergodic Theory and Dynamical Systems I},
  pages 195--210. Birkhäuser, 1981.

\bibitem[Nag73]{Nage1973}
R.~Nagel.
\newblock Mittelergodische {H}albgruppen linearer {O}peratoren.
\newblock {\em Ann. Inst. Fourier}, 23:75--87, 1973.

\bibitem[Pym89]{Pym1989}
J.~S. Pym.
\newblock Compact semigroups with one{-}sided continuity.
\newblock In K.~H. Hofmann, J.~D. Lawson, and J.~S. Pym, editors, {\em The
  Analytical and Topological Theory of Semigroups}, pages 197--217. de Gruyter,
  1989.

\bibitem[Rom11]{Roma2011}
A.~Romanov.
\newblock Weak* convergence of operator means.
\newblock {\em Izv. Math.}, 75:1165--1183, 2011.

\bibitem[Rom16]{Roma2013}
A.~Romanov.
\newblock Ergodic properties of discrete dynamical systems and enveloping
  semigroups.
\newblock {\em Ergodic Theory Dynam. Systems}, 36:198--214, 2016.

\bibitem[Ros77]{Rose1977}
H.~P. Rosenthal.
\newblock Point{-}wise compact subsets of the first {B}aire class.
\newblock {\em Amer. J. Math.}, 99:362--378, 1977.

\bibitem[Sat78]{Sato1977}
R.~Sato.
\newblock On abstract mean ergodic theorems.
\newblock {\em Tôhoku Math. Journ.}, 38:575--581, 1978.

\bibitem[Sch99]{Scha1999}
H.~H. Schaefer.
\newblock {\em Topological Vector Spaces}.
\newblock Springer, 2nd edition, 1999.

\bibitem[Sch13a]{Schr2013}
M.~Schreiber.
\newblock {\em Topological {W}iener--{W}intner Theorems for Amenable
  Semigroups}.
\newblock PhD thesis, University of Tübingen, 2013.

\bibitem[Sch13b]{Schr2013a}
M.~Schreiber.
\newblock Uniform families of ergodic operator nets.
\newblock {\em Semigroup Forum}, 86:321--336, 2013.

\bibitem[Sch14]{Schr2014}
M.~Schreiber.
\newblock Topological {W}iener--{W}intner theorems for amenable operator
  semigroups.
\newblock {\em Ergodic Theory Dynam. Systems}, 34:1674--1698, 2014.

\end{thebibliography}
\footnotesize

  \textsc{Henrik Kreidler, Mathematisches Institut, Universität Tübingen, Auf der Morgenstelle 10, D-72076 Tübingen, Germany}\par\nopagebreak
  \textit{E-mail address}: \texttt{hekr@fa.uni-tuebingen.de}
\end{document}